\newtheorem{thm}{Theorem}[section]
\newtheorem{lm}[thm]{Lemma}
\theoremstyle{definition}
\newtheorem{df}[thm]{Definition}
\newtheorem*{df*}{Definition}
\theoremstyle{remark}
\newtheorem{rem}[thm]{Remark}
\newtheorem*{rem*}{Remark}
\numberwithin{equation}{section}
\newcommand{\ci}[1]{_{ {}_{\scriptstyle #1}}}
\newcommand{\nm }{\,\rule[-.6ex]{.13em}{2.3ex}\,}
\newcommand{\ti}[1]{_{\scriptstyle \text{\rm #1}}}
\newcommand{\ut}[1]{^{\scriptstyle \text{\rm #1}}}
\newcommand{\dd}{{\mathrm{d}}}
\newcommand{\trace}{\operatorname{trace}}
\newcommand{\wh}{\widehat}
\newcommand{\lla}{\llangle}
\newcommand{\rra}{\rrangle}
\newcommand{\cD}{\mathscr{D}}
\newcommand{\cM}{\mathcal{M}}
\newcommand{\cS}{\mathscr{S}}
\newcommand{\cG}{\mathcal{G}}
\newcommand{\cE}{\mathcal{E}}
\newcommand{\cF}{\mathcal{F}}
\newcommand{\fS}{\mathfrak{S}}
\newcommand{\f}{\varphi}
\newcommand{\e}{\varepsilon}
\newcommand{\C}{\mathbb{C}}
\newcommand{\R}{\mathbb{R}}
\newcommand{\Z}{\mathbb{Z}}
\newcommand{\N}{\mathbb{N}}
\newcommand{\E}{\mathbb{E}}
\newcommand{\F}{\mathbb{F}}
\newcommand{\shi}{\mathbb{S}}
\newcommand{\bI}{\mathbf{I}}
\newcommand{\ch}{\operatorname{ch}}
\newcommand{\ran}{\operatorname{Ran}}
\newcommand{\1}{\mathbf{1}}
\newcommand{\wt}{\widetilde}
\newcommand{\sign}{\operatorname{sign}}
\newcommand{\cz}{Calder\'{o}n--Zygmund\ }
\newcommand{\esssup}{\operatorname*{ess\,sup}}
\newcommand{\La}{\langle }
\newcommand{\Ra}{\rangle }
\newcommand{\bA}{\mathbf{A}}
\newcommand{\bL}{\mathbf{L}}
\newcommand{\la}{\langle}
\newcommand{\ra}{\rangle}
\newcommand{\supp}{\operatorname{supp}}
\def\cyr{\fontencoding{OT2}\fontfamily{wncyr}\selectfont}
\DeclareTextFontCommand{\textcyr}{\cyr}
\newenvironment{entry}
{\begin{list}{X}%
  {%
      \setlength{\labelwidth}{55pt}%
      \setlength{\leftmargin}{\labelwidth}
      \addtolength{\leftmargin}{\labelsep}%
   }%
}%
{\end{list}}      
\renewcommand{\labelenumi}{(\roman{enumi})}
\newcounter{vremennyj}
\newcommand\cond[1]{\setcounter{vremennyj}{\theenumi}\setcounter{enumi}{#1}\labelenumi\setcounter{enumi}{\thevremennyj}}
\begin{document}
\title[Convex body domination]%
{
Convex body domination and weighted estimates with matrix weights
}

\author[F.~Nazarov]{Fedor Nazarov}
\thanks{}
\address{Department of Mathematics, Kent State University, USA}
\email{nazarov@math.kent.edu \textrm{(F. Nazarov)}}
\author[S.~Petermichl]{Stefanie Petermichl}
\address{Department of Mathematics, Universit\'e Paul Sabatier, Toulouse, France}
\email{stefanie.petermichl@gmail.com \textrm{(S. Petermichl)}}
\author[S.~Treil]{Sergei Treil}
\thanks{}
\address{Department of Mathematics, Brown University, USA}
\email{treil@math.brown.edu \textrm{(S. Treil)}}
\thanks{Work of S.~Treil is supported by the NSF grants DMS-1301579, DMS-1600139}
\author[A.~Volberg]{Alexander Volberg}
\thanks{AV  
is partially supported  by the Oberwolfach Institute for Mathematics, Germany; AV is also supported by the NSF grant DMS-1265549}
\address{Department of Mathematics, Michigan Sate University, East Lansing, MI. 48823}
\email{volberg@math.msu.edu \textrm{(A. Volberg)}}
\makeatletter
\@namedef{subjclassname@2010}{
  \textup{2010} Mathematics Subject Classification}
\makeatother
\subjclass[2010]{42B20, 42B35, 47A30}
%
%
\keywords{matrix weights
   martingale transform, matrix weighted maximal function}
\begin{abstract}
We introduce the so called \emph{convex body valued} sparse operators, which generalize the notion of sparse operators to the case of spaces of vector valued functions.

We prove that \cz operators as well as Haar shifts and paraproducts can be dominated by such operators.  By 
estimating sparse operators we obtain weighted estimates with matrix weights. We get two weight $A_2$-$A_\infty$ estimates, that in the one weight case give us the estimate 
\[
\|T\|\ci{L^2(W)\to L^2 (W)} \le C [W]\ci{\bA_2}^{1/2} [W]\ci{A_\infty} \le C[W]\ci{\bA_2}^{3/2}
\] 
where $T$ is either \cz operator (with modulus of continuity satisfying the Dini condition), or a Haar shift or a paraproduct. 
\end{abstract}
\maketitle

\setcounter{tocdepth}{1}
\tableofcontents
\setcounter{tocdepth}{3}

\section*{Notation}
\begin{entry}

\item[$|Q|$] for $Q\subset \R^N$ denotes its $N$-dimensional Lebesgue measure;\\
\item[$\cD$] a dyadic lattice. We consider all ``translations'' of the standard dyadic lattice;\\
\item[$\la f\ra\ci Q$] average of the function $f$ over $Q$, $\la f \ra \ci Q:= |Q|^{-1} \int_Q f(x) \dd x$;\\ 
\item[$\lla f \rra\ci Q$] ``convex body valued'' average of a functions $f$ with values in $\R^d$, see Section \ref{s:CB_av}; \\
\item[$\|\,\cdot\,\|, \nm\,\cdot\, \nm$]  norm; since we are dealing with matrix- and operator-valued functions we will use the symbol $\|\,\cdot\,\|$ (usually with a
subscript) for the norm in a functions space, while $\nm\,\cdot\,\nm$ is used for the
norm in the underlying vector (operator) space.
 Thus for a vector-valued function
$f$ the symbol
$\|f\|_2$ denotes its
$L^2$-norm, but the symbol $\nm f\nm$ stands for the scalar-valued function  $x\mapsto \nm f(x)\nm$;  \\
\end{entry}

\section{Motivations, definitions and results}
\label{nr}

This paper started as an (unsuccessful) attempt to prove the so-called $A_2$-conjecture for the weighted estimates with matrix weights.  

Recall that a ($d$-dimensional) matrix weight on $\R^N$ is a locally integrable function on $\R^N$ with values in the set of positive-semidefinite $d\times d$ matrices. The weighted space $L^2(W)$ is defined as the space of all measurable functions $f:\R^N \to \F^d$, (here $\F=\R$, or $\F=\C$) for which
\[
\| f\|\ci{L^2(W)}^2:=\int (W(x)f(x), f(x)) \dd x <\infty\,;
\]
here $(\cdot, \cdot)$ means the usual duality in $\F^d$.

A matrix weight $W$ is said to satisfy the matrix $\mathbf A_2$ condition (write $W\in(\mathbf A_2)$) if 
\[
[W]\ci{\bf A_2} := \sup_Q \nm \la W\ra\ci Q^{1/2} \la W^{-1}\ra\ci Q^{1/2} \nm^2  < \infty\,.
\]
The quantity $[W]\ci{\bf A_2}$ is called the \emph{$\mathbf A_2$ characteristic of the weight} $W$.

In \cite{TV} it has been proved that the weighted estimate
\[
\| Tf\|\ci{L^2(W)} \le C \|f\|\ci{L^2(W)} 
\]
holds  for  the Hilbert transform $T$  (or for the Haar multipliers) if and only if $W\in(\bA_2)$ (for necessity we need to assume that for no vector $e\in\F^d$ we have $W(x)e=0$ a.e.)

Moreover, it has been proved in \cite{BPW} that for the Hilbert transform
\begin{equation}
\label{log}
\| T\|\ci{L^2(W)\to L^2(W)}\le c(d) [W]_{\bA_2}^{3/2} \log (1+[W]_{\bA_2} )\,.
\end{equation}
However in the scalar case $d=1$ there is just  $c [w]_{A_2} $ in the right hand side of \eqref{log}; such estimate in the scalar case is now proved for a wide class of \cz operators, as well as for their martingale analogues, and the constant $c$ there depends only on the operator $T$, but not the weight. 

In the scalar case this was the instance of the famous $A_2$ conjecture proved first in \cite{W},  \cite{PV}, \cite{P}, and then in full generality by \cite{Hy}.
Let us mention that after \cite{Hy} many reproofs appeared one more elegant than the other, see, e. g. \cite{HyPTV}, \cite{Le}.

A natural question then would be whether it is possible to have only $C(T, N, d) [W]\ci{\bA_2}$ on the right hand side of \eqref{log}, or whether in the matrix case there are some new phenomena and the linear norm estimate in terms of $[W]\ci{\bA_2}$ fails. We still are not able to answer this question, we only manage to eliminate the logarithm $\log[W]\ci{\bA_2}$, leaving us with the exponent $3/2$. 

However, this is not the main results of the paper. 

One of  the main results of this paper is a theorem about domination of vector-valued \cz operators (and of their dyadic analogues) by sparse operators. In the scalar case, domination by sparse operators significantly simplified the proof of the $A_2$ and $A_p$ conjectures, and allowed to extend it to the most general class of \cz operators, namely to the case of $\omega$-\cz operators with the modulus of continuity $\omega$ satisfying the Dini condition. 

In this paper we introduce a notion of domination by a sparse operator for operators in vector-valued spaces,   that can be considered a ``correct'' generalization of the scalar case. Our sparse operator $\bL\ci\cS$ acts to the space of function whose values are symmetric  convex sets in $\R^d$, and the ``domination'' means the inclusion 
\[
Tf(x) \in \bL\ci\cS f(x)\qquad \text{a.e.~on } \R^N. 
\] 
And essentially, our first result is that if a scalar operator can be dominated by a sparse operator, then its vector version (i.e., its tensor product with the identity $\bI_d$ in $\F^d$) can be dominated by our \emph{convex body valued} sparse operator. 

We were not able to prove the  result in such generality,  but we have proved it  for all scalar operators that are known to admit domination by sparse operators, i.e., for $\omega$-\cz operators with the modulus of continuity satisfying the Dini condition,  and for a wide class of martingale operators, including the so-called \emph{big Haar shifts} and \emph{paraproducts}, see the definitions in Section \ref{s:HaarShift} below. 

The convex body valued sparse operators look complicated, but the weighted estimates of these operators can be done via very simple \emph{scalar} operators. In this direction we were able to obtain some $A_2$-$A_\infty$ type weighted estimates with matrix weights, even in a two-weight setting. 

Namely, assuming that $W(x)$ is invertible a.e. and denoting for example by $M\ci W^{1/2}$ multiplication by $W^{1/2}$,  an operator $T$ acts in $L^2(W)$ if and only if $M\ci W^{1/2} T M\ci{W}^{-1/2}$ is bounded in the non-weighted $L^2$. Thus it is a natural problem to consider the two-weight problem of finding the condition on matrix weights $V$, $W$ such that the operator $M\ci W^{1/2} T M\ci{V}^{1/2}$ is bounded (in the non-weighted $L^2$). 

We assume that the weights $V$ and $W$ satisfy the two-weight matrix $\bA_2$ condition 
\begin{align}
\label{2w-A_2}
\sup_Q  \nm\la W\ra\ci Q^{1/2} \la V\ra\ci Q^{1/2}\nm^2 =: [W, V]\ci{\bA_2} <\infty;
\end{align}
here the supremum is taken over all cubes in $\R^N$. 
This assumption seems natural, because 
acting the same way as in \cite{TV} it is possible to show for the Hilbert transform $T$ this condition \eqref{2w-A_2} is necessary.  

Recall that a scalar weight $w$ on $\R^N$ is said to satisfy the $A_\infty$ condition if for all cubes $Q\subset \R^N$
\begin{align}
\label{A_infty}
\la M\ci Q  w  \ra\ci Q \le C \la w\ra\ci Q,
\end{align}
where $M$ is the maximal function adapted to the cube $Q$
\begin{align}
\label{M_Q}
M\ci Q f(x) = \sup \{ |\la f \ra\ci R | \,: \, R\in \cD(Q), x\in R\}
\end{align}
(we put $M\ci Q f(x) =  0$ for $x\notin Q$). 

The best constant in \eqref{A_infty} is called the $A_\infty$ characteristic of the weight $w$, and denoted by $[w]\ci{A_\infty}$. 

For a matrix weight $W$ define its \emph{scalar $A_\infty$ characteristic} $[W]\ci{A_\infty}\ut{sc}$ as 
\begin{align}
\label{A_infty-sc}
[W]\ci{A_\infty}\ut{sc}:= \sup_{e\in\F^d} [w_e ]\ci{A_\infty}, 
\end{align}
where the scalar weight $w_e$ is defined by $w_e(x) = (W(x) e, e)$, $x\in \R^N$. 

It is well known and will be explained later in the paper that $[W]\ci{A_\infty}\ut{sc} \le C(N) [W]\ci{\bA_2}$. 

\begin{thm}
\label{t:estCZ}
Let $T$ be a \cz operator with modulus of continuity $\omega$ satisfying the Dini condition. Assume that the weights $V$, $W$ satisfy the joint $\bA_2$ condition and that they both satisfy the scalar $A_\infty$ condition, meaning that $[V]\ci{A_\infty}\ut{sc}, [W]\ci{A_\infty}\ut{sc} <\infty$. 
Then 
\[
\| M\ci W^{1/2} T M\ci V^{1/2} \|\ci{L^2\to L^2}^2 \le C [V]\ci{A_\infty}\ut{sc} [W]\ci{A_\infty}\ut{sc} [V, W]\ci{\bA_2}, 
\]
where $C=C(T, N, d)$. 
\end{thm}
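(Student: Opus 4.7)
I would combine the convex body valued sparse domination of $T$ (established earlier in the paper for $\omega$-\cz operators with Dini modulus of continuity) with a matrix-weighted Carleson embedding argument. By polarization it is enough to bound the bilinear form
\[
\bigl\langle W^{1/2}\,T\, V^{1/2}g\ci 1,\,g\ci 2\bigr\rangle
=\int_{\R^N}\bigl(T(V^{1/2}g\ci 1)(x),\,W^{1/2}(x)g\ci 2(x)\bigr)\,dx
\]
for arbitrary $L^2$ test functions $g\ci 1, g\ci 2:\R^N\to\F^d$. Applied with $f=V^{1/2}g\ci 1$, the domination $Tf(x)\in\bL\ci\cS f(x)=\sum_{Q\in\cS,\,Q\ni x}\lla f\rra\ci Q$ together with the defining duality property of the convex body average, $\sup\{(u,v):v\in\lla f\rra\ci Q\}=\la|(u,f(\cdot))|\ra\ci Q$ (achieved by choosing the test scalar $\varphi$ equal to $\sign(u,f)$), reduces the bilinear form to a scalar sparse sum
\[
\bigl|\bigl\langle W^{1/2}TV^{1/2}g\ci 1,\,g\ci 2\bigr\rangle\bigr|
\le\sum_{Q\in\cS}\frac{1}{|Q|}\int_Q\!\int_Q
\bigl|\bigl(g\ci 2(x),\,W^{1/2}(x)V^{1/2}(y)g\ci 1(y)\bigr)\bigr|\,dx\,dy.
\]

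Next I would absorb the matrix weights into \emph{reducing matrices} $A\ci Q:=\la W\ra\ci Q^{1/2}$ and $B\ci Q:=\la V\ra\ci Q^{1/2}$: the two weight hypothesis \eqref{2w-A_2} is precisely the uniform bound $\nm A\ci Q B\ci Q\nm\le[V,W]\ci{\bA_2}^{1/2}$. Inserting the identity factorization
\[
W^{1/2}(x)V^{1/2}(y)=\bigl(W^{1/2}(x)A\ci Q^{-1}\bigr)\bigl(A\ci Q B\ci Q\bigr)\bigl(B\ci Q^{-1}V^{1/2}(y)\bigr)
\]
into the inner product and applying Cauchy--Schwarz in $\F^d$, then a second Cauchy--Schwarz in the sum over $Q\in\cS$, yields
\[
\bigl|\bigl\langle W^{1/2}TV^{1/2}g\ci 1,\,g\ci 2\bigr\rangle\bigr|^{2}
\le [V,W]\ci{\bA_2}\;\Sigma\ci W(g\ci 2)\;\Sigma\ci V(g\ci 1),
\]
where $\Sigma\ci W(g):=\sum_{Q\in\cS}|Q|\,\bigl\la\nm A\ci Q^{-1}W^{1/2}g\nm\bigr\ra\ci Q^{2}$ and $\Sigma\ci V$ is defined analogously. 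Note that the $x$ and $y$ integrals factor after the Cauchy--Schwarz step, since $\nm A\ci Q B\ci Q\nm$ is constant on $Q\times Q$; this is the whole point of introducing the reducing matrices.

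The proof is then completed by the matrix-weighted Carleson embedding
$\Sigma\ci W(g)\le C(N,d)\,[W]\ci{A_\infty}\ut{sc}\,\|g\|\ci{L^2}^{2}$
together with its analogue for $V$. Using the sparseness of $\cS$ (disjoint principal sets $E\ci Q\subset Q$ with $|E\ci Q|\ge|Q|/2$), this is equivalent to the $L^2$ bound $\|M\ci W g\|\ci{L^2}^{2}\le C[W]\ci{A_\infty}\ut{sc}\|g\|\ci{L^2}^{2}$ for the matrix-weighted maximal operator $M\ci W g(x):=\sup_{Q\ni x}\bigl\la\nm\la W\ra\ci Q^{-1/2}W^{1/2}g\nm\bigr\ra\ci Q$. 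I expect this to be the main technical obstacle. The natural strategy is to test the quantity $\nm\la W\ra\ci Q^{-1/2}W^{1/2}(x)g(x)\nm^{2}$ in arbitrary directions $e\in\F^d$ and reduce, via an appropriate orthonormal basis and the inequality $\la\cdot\ra\ci Q^{2}\le\la(\cdot)^{2}\ra\ci Q$, to the classical scalar $A_\infty$ Carleson embedding applied to the scalar weights $w\ci e(x)=(W(x)e,e)$, whose $A_\infty$ characteristic is dominated by $[W]\ci{A_\infty}\ut{sc}$ by definition. The delicate point is to carry out this directional decomposition with only a dimensional loss $C(d)$, rather than accumulating an extra factor of $[W]\ci{\bA_2}$ coming from a crude passage through the operator norm $\nm\la W\ra\ci Q^{-1/2}W^{1/2}(x)\nm$.
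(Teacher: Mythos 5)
Your overall architecture is sound and runs parallel to the paper's: convex body sparse domination, the bilinear reduction via the duality $\sup\{(u,v):v\in\lla f\rra\ci Q\}=\la|(u,f)|\ra\ci Q$, insertion of averaged matrices to extract $[V,W]\ci{\bA_2}$, and two square-function (Carleson embedding) bounds, one per weight. Your splitting is the symmetric one, inserting $\la W\ra\ci Q^{-1/2}\la W\ra\ci Q^{1/2}$ on one side and $\la V\ra\ci Q^{1/2}\la V\ra\ci Q^{-1/2}$ on the other, so that both remaining sums are of the type $\Sigma\ci W(g)=\sum_Q|Q|\la\nm\la W\ra\ci Q^{-1/2}W^{1/2}g\nm\ra\ci Q^2$; the paper instead inserts only $\la V\ra\ci Q^{\pm1/2}$ and gets the two square functions $\wt S_2^V$ and $\wt S_3$ of Section 5, with the $\bA_2$ constant absorbed into the $\wt S_3$ bound through the trace identity $\trace(\la V\ra\ci Q^{1/2}\la W\ra\ci Q\la V\ra\ci Q^{1/2})\le d\,[W,V]\ci{\bA_2}$. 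Both decompositions lead to the same final bound, so this difference is cosmetic.

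The genuine gap is exactly where you suspect it: the embedding $\Sigma\ci W(g)\le C(N,d)[W]\ci{A_\infty}\ut{sc}\|g\|_{L^2}^2$ is the whole content of the theorem's quantitative dependence on $[W]\ci{A_\infty}\ut{sc}$, and the strategy you sketch for it fails. If you test in directions and use $\la\cdot\ra\ci Q^2\le\la(\cdot)^2\ra\ci Q$, the normalization $\la(W u,u)\ra\ci Q=1$ for $u=\la W\ra\ci Q^{-1/2}e$ removes the weight entirely and leaves you with $d^2\sum_{Q\in\cS}|Q|\la\nm g\nm^2\ra\ci Q$. This is \emph{not} bounded by $C\|g\|_{L^2}^2$ over a sparse family: the Carleson embedding theorem (Lemma \ref{l-dCET}) controls powers of averages $\la\varphi\ra\ci Q^p$, never averages of powers $\la\nm g\nm^2\ra\ci Q$ (take $\cS=\{[0,2^{-n})\}_{n\ge0}$ and $g(x)=x^{-1/2+\e}$; the ratio blows up like $1/\e$). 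In particular no amount of care in the directional decomposition rescues the argument, because the $A_\infty$ hypothesis never enters it quantitatively. The missing ingredient is the self-improvement step of Lemma \ref{l:S-02}: apply H\"older with exponent $r=2(1+\delta)$, $\delta=1/(2^{N+1}[W]\ci{A_\infty}\ut{sc})$, to get $\la\nm\la W\ra\ci Q^{-1/2}W^{1/2}g\nm\ra\ci Q\le\la\nm\la W\ra\ci Q^{-1/2}W^{1/2}\nm_{\fS_2}^r\ra\ci Q^{1/r}\la\nm g\nm^{r'}\ra\ci Q^{1/r'}$; note that $y\mapsto\trace(\la W\ra\ci Q^{-1/2}W(y)\la W\ra\ci Q^{-1/2})$ is a sum of $d$ scalar weights $w_e$ with $[w_e]\ci{A_\infty}\le[W]\ci{A_\infty}\ut{sc}$, so the sharp reverse H\"older inequality (Theorem \ref{RH}) lowers the first factor to the exponent-$2$ average, which equals $d^{1/2}$ by the trace identity \eqref{trace-01}; then apply the Carleson embedding to $\varphi=\nm g\nm^{r'}$ with exponent $p=2/r'>1$, whose constant $(p')^p\sim 1/\delta\sim[W]\ci{A_\infty}\ut{sc}$ is precisely where the $A_\infty$ characteristic appears. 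Without this (or an equivalent) mechanism your proof is incomplete at its central step.
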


For a dyadic lattice $\cD$ in $\R^N$ one can define corresponding dyadic $\bA_2^\cD$ and $A_\infty^\cD$ classes, by considering in \eqref{2w-A_2} and \eqref{A_infty} only cubes $Q\in\cD$. For a matrix weight we can also define the scalar dyadic $\bA_\infty^\cD$ class. 

We will use $[W, V]\ci{\bA_2^\cD}$, $[w]\ci{A_\infty^\cD}$, $[W]\ci{\bA_\infty^\cD}\ut{sc}$ for the corresponding characteristics.

As another application of convex body domination we immediately get the following weighted estimate for norms of dyadic operators, namely \emph{big  Haar shifts} and \emph{paraproducts} mentioned above, see the definitions in Section \ref{s:HaarShift}.

\begin{thm}
\label{t:estHS}
Let $T$ be a big Haar  shift or a paraproduct (with respect to a dyadic lattice $\cD$).  Assume that the weights $V$, $W$ satisfy the joint $\bA_2^\cD$ condition and that they both satisfy the scalar $A_\infty ^ \cD$ condition, meaning that $[V]\ci{A_\infty^\cD}\ut{sc}, [W]\ci{A_\infty^\cD}\ut{sc} <\infty$. 
Then 
\[
\| M\ci W^{1/2} T M\ci V^{1/2} \|\ci{L^2\to L^2}^2 \le C [V]\ci{A_\infty^\cD}\ut{sc} [W]\ci{A_\infty^\cD}\ut{sc} [V, W]\ci{\bA_2^\cD}, 
\]
where $C=C(T, N, d)$. 
\end{thm}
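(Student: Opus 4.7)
The plan is to follow the template of the scalar dyadic two-weight $A_2$--$A_\infty$ theorem, using the convex body valued sparse domination in place of pointwise scalar domination. First, I would invoke the convex body sparse domination for big Haar shifts and paraproducts announced in the paragraph preceding the theorem and proved elsewhere in the paper: for every reasonable $\F^d$-valued $\bff$, there is a $(1/2)$-sparse collection $\cS = \cS(\bff) \subset \cD$ such that $T\bff(x) \in C_T \bL\ci\cS \bff(x)$ a.e., with $C_T$ depending only on the complexity of $T$ and on $N, d$.

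Next I would translate this geometric inclusion into a sparse bilinear estimate. By duality,
$$\| M\ci W^{1/2} T M\ci V^{1/2} \|\ci{L^2\to L^2} = \sup_{\|f\|_2 = \|g\|_2 = 1} \bigl| ( T(V^{1/2} f), W^{1/2} g ) \bigr|.$$
Since $\bL\ci\cS \bff(x)$ is symmetric and convex, its pairing against $\bg(x) = W^{1/2} g(x)$ is controlled by the support function of the convex body averages $\lla \bff \rra\ci Q$, which by the definition in Section~\ref{s:CB_av} equals $\la |(\bff, v)|\ra\ci Q$ at a vector $v$. Unpacking the Minkowski sum yields a sparse bilinear bound of the shape
$$|(T\bff, \bg)| \le C \sum_{Q \in \cS} \int_Q \la |(\bff, \bg(x))| \ra\ci Q\, dx.$$

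Finally, substituting $\bff = V^{1/2} f$, $\bg = W^{1/2} g$ and choosing on each cube $Q \in \cS$ unit vectors $e_Q, e_Q' \in \F^d$ that nearly realize the matrix operator norm $\nm \la W\ra\ci Q^{1/2} \la V\ra\ci Q^{1/2} \nm \le [V,W]\ci{\bA_2^\cD}^{1/2}$, the matrix Cauchy--Schwarz inequality reduces the problem to a scalar two-weight sparse bilinear form in the scalar weights $v_{e_Q'}, w_{e_Q}$ from \eqref{A_infty-sc}. Since the scalar $A_\infty^\cD$ characteristics $[V]\ci{A_\infty^\cD}\ut{sc}$ and $[W]\ci{A_\infty^\cD}\ut{sc}$ dominate $[v_{e_Q'}]\ci{A_\infty^\cD}$ and $[w_{e_Q}]\ci{A_\infty^\cD}$ uniformly, the standard scalar Carleson embedding / principal-cubes argument (as in the known two-weight sparse $A_2$--$A_\infty$ bound) controls the sum and, after squaring, yields the announced inequality.

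The delicate step is the linearization of the convex body pairing: a naive coordinatewise reduction would produce a product of single-weight $\bA_2^\cD$ characteristics $[V]\ci{\bA_2^\cD}^{1/2}[W]\ci{\bA_2^\cD}^{1/2}$ rather than the joint factor $[V,W]\ci{\bA_2^\cD}^{1/2}$. The convex body framework is tailored so that the pairing on each sparse cube $Q$ forms the matrix product $\la W\ra\ci Q^{1/2}\la V\ra\ci Q^{1/2}$ exactly once; making this precise, and verifying compatibility of the chosen test vectors $e_Q, e_Q'$ with the scalar $A_\infty^\cD$ bookkeeping, is the main technical obstacle. Once that is handled, the remainder is a routine reprise of scalar two-weight sparse theory.
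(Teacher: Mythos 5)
Your first two steps do match the paper's architecture: Theorem \ref{t:DomHS} provides the convex body sparse domination, and the support--function computation (equivalently, Lemma \ref{l:ConvAv} or Lemma \ref{l:ConvAvrank}) reduces the theorem to a uniform bound on the scalar Lerner forms
\[
\sum_{Q\in\cS}|Q|^{-1}\iint_{Q\times Q}\nm V^{1/2}(x)W^{1/2}(y)\nm\,|f(y)|\,|g(x)|\,\dd y\,\dd x .
\]
The genuine gap sits exactly at the step you yourself call ``the main technical obstacle'' and then leave unresolved. Choosing per-cube unit vectors $e\ci Q, e\ci Q'$ that nearly realize $\nm\la W\ra\ci Q^{1/2}\la V\ra\ci Q^{1/2}\nm$ does not reduce the integrand to the scalar weights $w_{e\ci Q}, v_{e\ci Q'}$: the quantity $|(W^{1/2}(y)f(y),V^{1/2}(x)g(x))|$ depends on the pointwise directions of $W^{1/2}(y)f(y)$ and $V^{1/2}(x)g(x)$, which have nothing to do with the extremal vectors of the \emph{averaged} matrices, and there is no pointwise bound of the form $\nm W^{1/2}(y)\xi\nm\le C\,w_{e\ci Q}(y)^{1/2}\nm\xi\nm$ for all $\xi$. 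Moreover, even granting such a reduction, the scalar two-weight sparse theorem you invoke requires a joint $A_2$ condition for the pair $(w_{e\ci Q},v_{e\ci Q'})$ over \emph{all} cubes $R$, whereas $\la w_{e\ci Q}\ra\ci R\,\la v_{e\ci Q'}\ra\ci R$ is controlled by $[V,W]\ci{\bA_2^\cD}$ only for $R=Q$.

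The paper's resolution is different and is the actual content of the proof. One inserts $\la V\ra\ci Q^{\pm1/2}$, writing $\nm V^{1/2}(x)W^{1/2}(y)\nm\le\nm V^{1/2}(x)\la V\ra\ci Q^{-1/2}\nm\cdot\nm\la V\ra\ci Q^{1/2}W^{1/2}(y)\nm$, and applies Cauchy--Schwarz over the sparse family to bound the form by $\|\wt S_2^V g\|\ci{L^2}\|\wt S_3 f\|\ci{L^2}$ (Lemma \ref{l:L-sc}). Each square function is then estimated by (i) dominating the operator norm by the Hilbert--Schmidt norm, whose square is a sum of $d$ scalar weights of the form $w_e$ and hence an $A_\infty$ weight with constant at most $[W]\ci{A_\infty}\ut{sc}$; (ii) the sharp reverse H\"{o}lder inequality with exponent $1+\delta$, $\delta\simeq1/[W]\ci{A_\infty}\ut{sc}$; (iii) the trace identities \eqref{trace-01}--\eqref{trace-02}, which is where $d$ and the joint factor $[W,V]\ci{\bA_2}$ actually enter; and (iv) the Carleson embedding theorem at exponent $p=2/r'>1$, whose constant $(p')^p\simeq[W]\ci{A_\infty}\ut{sc}$ supplies the $A_\infty$ factors. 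Your appeal to a ``standard Carleson embedding / principal cubes argument'' hides step (ii): without reverse H\"{o}lder one is forced to $p=1$, where the Carleson embedding fails for a sparse family. In short, the skeleton is right, but both quantitative ingredients of the stated bound --- the joint $\bA_2$ factor and the two $A_\infty$ factors --- are produced by mechanisms your proposal does not supply.
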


\begin{rem}
As we discussed above, the boundedness of an operator $T$ in the weighted space $L^2(W)$ is equivalent to the boundedness of the operator $M^{1/2}_W T M^{-1/2}_W$ in the non-weighted space $L^2$. 

It is well known (and is explained here in Section \ref{ss:MatrW}, see Remark \ref{r:A_infty-A_2} there) that $[W]\ci{A_\infty}\ut{sc} \le C [W]\ci{\bA_2}$, so for the one weight estimates Theorems \ref{t:estCZ} and \ref{t:estHS} give $C [W]\ci{\bA_2}^{3/2}$ for the estimates of the norm. 
\end{rem}

\section{Convex body domination of singular integral operators}
In the rest of the paper we will treat $\C^n$ as a real vector space, so all vector functions will be $\R^d$-valued. 
\subsection{What is a sparse family?}
\label{s:sparse_fam}

There are several definition of sparse family of cubes. 

\begin{df}[Classical definition]
\label{df:Sparse} Let $0<\e<1$
A collection of dyadic cubes $\cS\subset \cD$ is called $\e$-\emph{sparse} if for any $Q\in\cS$ 
\[
\sum_{R\in\ch_\cS Q} |R| \le \e |Q|. 
\]
\end{df}

\begin{df}
\label{df:w-Sparse} Let $0<\eta < 1$. 
A collection $\cS$ of cubes (not necessarily dyadic) is called (weakly) $\eta$-sparse if there exists a disjoint collection of measurable sets $E\ci Q\subset Q$, $Q\in\cS$ such that
\[
| E\ci Q |\ge \eta |Q|
\]
for all $Q\in\cS$. 
\end{df}

\begin{df}[Dyadic Carleson family]
\label{df:dyCarlFam}
Let $\lambda >0$. A collection $\cS$ of dyadic cubes is called \emph{dyadic $\lambda$-Carleson} if for any $Q\in\cS$
\[
\sum_{R\in\cS(Q)} |R| \le \lambda |Q|. 
\]
\end{df}

\begin{df}[Carleson family]
\label{df:CarlFam}
Let $\Lambda >0$. A collection $\cS$ of  cubes (not necessarily dyadic) is called $\Lambda$-\emph{Carleson} if for any cube $Q$
\[
\sum_{\substack{R\in\cS(Q)\\ \ell(R)\le\ell(Q)
} } |R\cap Q| \le \Lambda |Q|.
\]
\end{df}

\subsubsection{Comparison of different definitions of sparse families}

First note, that if $\cS$ is a family of dyadic cubes, then it is Carleson in the sense of both Definition \ref{df:dyCarlFam} and Definition \ref{df:CarlFam}. Moreover, the best constants $\Lambda$ from Definition \ref{df:CarlFam} and $\lambda$ from  Definition \ref{df:dyCarlFam} are equivalent, 
\[
\lambda \le \Lambda \le 2^N \lambda. 
\]

For an $\e$-sparse (in the sense of Definition \ref{df:Sparse}) dyadic system $\cS$ one can define 
for $Q\in\cS$
\[
E\ci Q := Q\setminus \bigcup_{R\in\ch_\cS Q} R, 
\]
so such system is trivially (weakly) $\eta$-sparse in the sense of Definition \ref{df:w-Sparse}. 

It is also easy to see that a dyadic weakly $\eta$-sparse family (in the sense of Definition \ref{df:w-Sparse}) is a dyadic $\lambda$-Carleson family (in the sense of Definition \ref{df:dyCarlFam}) with $\lambda=1/\eta$. 

The converse is also true: any dyadic $\lambda$-Carleson family is $\eta$-sparse (in the sense of Definition \ref{df:w-Sparse}) with $\eta =1/\lambda$, see \cite[Lemma 6.3]{LN}.

It is also obvious that any dyadic $\lambda$-Carleson family with $\lambda<2$ is $\e$-sparse (in the sense of Definition \ref{df:Sparse}) with $\e=\lambda-1$.

It is also clear that given a dyadic $\lambda$-Carleson family ($\lambda$ is assumed to be large) one can split it into $n=n(\lambda, \lambda_1)$ $\lambda_1$-Carleson families, where $\lambda_1>1$ can be chosen as close to $1$ as we want. 

The proof is quite easy if one does not care about constants. If one cares about constants, it was proved in \cite[Lemma 6.6]{LN} that for any natural $m\ge 2$ a dyadic $\lambda$-Carleson family could be split into $m$ $\lambda_1$-Carleson families with $\lambda_1 = 1+ (\lambda-1)/m$. 

Finally, the standard ``three lattice trick'' allows as to estimate a sparse operator with respect to a $\Lambda$-Carleson family by a sum of $3^N$ dyadic $\lambda$-Carleson sparse operators, corresponding to $3^N$ dyadic lattices; here $\lambda=\lambda(\Lambda, N)$ and the estimate is with the constant $C=C(N)$. The estimate is trivial for both scalar sparse operators and for the convex body valued sparse operators defined below, see \eqref{CBLern}.  For the latter operators the domination   means inclusion. 

So, if we are interested in weighted  estimates of operators, it really does not matter what type of sparse families we are using. For example, if we dominate an operator by a $\Lambda$-Carleson sparse operators, the weighted estimates for this operator would follow from the estimates of classical $\e$-sparse dyadic operators (for $3^N$ dyadic lattices) with some $\e>0$. And again, this works for both scalar and convex body valued sparse operators. 

\subsection{What is an average of a vector-valued function and a sparse operator?}
\label{s:CB_av}
For a function $f\in L^1(Q)$ with values in $\R^d$ define its (convex body) average $\llangle f\rrangle\ci Q$ as 
\begin{align}
\label{C-B-Av}
\lla f \rra\ci Q:= \{ \La \f f \Ra\ci Q \Bigm| \f: Q\to \R, \ \|\f\|_\infty\le 1 \}. 
\end{align}
Clearly, $\lla f\rra\ci Q$ is a symmetric, convex compact set (it is closed because the closed unit ball in $L^\infty$ is weak* compact).

For a sparse family $\cS$ of cubes define a sparse (Lerner) operator $\bL=\bL\ci{\cS}$ by 
\begin{align}
\label{CBLern}
\bL\ci\cS f = \sum_{Q\in\cS} \lla f\rra\ci Q \1\ci Q,  
\end{align}
where the sum is understood as Minkowsky sum. 

We do not specify here what we mean by a sparse family, since any of the above definitions of a sparse family can be used. 

\begin{lm}
For a sparse family $\cS$ of cubes and for compactly supported $f\in L^1$, a.e.~on $\R^N$ the set $\bL\ci\cS f(x)$ is a bounded convex symmetric subset of $\R^d$. 
\end{lm}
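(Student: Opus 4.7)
The plan is to verify the three properties---convexity, symmetry, and boundedness---separately. Convexity and symmetry are formal and follow from the corresponding properties of each summand $\lla f\rra\ci Q$ together with the fact that both properties are preserved by Minkowski sums; boundedness is the one substantive point, and reduces to the almost-everywhere finiteness of a scalar sparse operator applied to $\nm f\nm$.

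First I would record the properties of a single summand. For each cube $Q$, the set $\lla f\rra\ci Q$ is the image of the closed unit ball of $L^\infty(Q,\R)$ under the $\R$-linear map $\varphi\mapsto\la\varphi f\ra\ci Q\in\R^d$. Hence it is a convex symmetric set (use $\varphi\mapsto-\varphi$ for symmetry), and since $\nm\la\varphi f\ra\ci Q\nm\le\la\nm f\nm\ra\ci Q$ whenever $\|\varphi\|_\infty\le 1$, it is contained in the closed Euclidean ball of radius $\la\nm f\nm\ra\ci Q$.

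Next I interpret the Minkowski sum $\bL\ci\cS f(x)=\sum_{Q\in\cS,\,Q\ni x}\lla f\rra\ci Q$ as the collection of all absolutely convergent series $\sum v_Q$ with $v_Q\in\lla f\rra\ci Q$. Under this interpretation, symmetry transfers via $v_Q\mapsto -v_Q$; convexity follows by combining two such series termwise, since each $tv_Q^{(1)}+(1-t)v_Q^{(2)}$ lies in $\lla f\rra\ci Q$ by convexity of the summand and absolute convergence is preserved by linear combinations. Moreover, the whole set lies in the closed Euclidean ball of radius $S(x):=\sum_{Q\in\cS,\,Q\ni x}\la\nm f\nm\ra\ci Q$.

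The only remaining task, and I expect the main obstacle, is the a.e.\ finiteness of $S(x)$. This is precisely the well-known statement that the scalar sparse operator $g\mapsto\sum_{Q\in\cS}\la g\ra\ci Q\1\ci Q$ is a.e.\ finite on $L^1$, available via the classical $L^1\to L^{1,\infty}$ bound. For compactly supported $f$ there is also a direct elementary route: fixing a cube $Q_0\supseteq\supp f$, split the $Q\in\cS$ with $Q\ni x$ into those with $\ell(Q)\le\ell(Q_0)$ and those with $\ell(Q)>\ell(Q_0)$. In the first group, sparseness (disjoint $E\ci Q\subseteq Q$ with $|E\ci Q|\ge\eta|Q|$) gives $\sum_{Q\subseteq cQ_0}|Q|\le\eta^{-1}|cQ_0|$, so the associated counting function is locally integrable and hence finite a.e. In the second group the cubes form a nested chain through $x$ whose sizes grow by at least a factor $1/\e$ per step (by sparseness), while each term is bounded by $\|f\|_1/|Q|$, so the tail is geometrically convergent. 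Combining the two halves gives $S(x)<\infty$ a.e.
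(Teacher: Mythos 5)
Your proof is correct and follows essentially the same route as the paper's: convexity and symmetry are formal consequences of the corresponding properties of each $\lla f\rra\ci Q$, and boundedness is obtained by splitting into small cubes (finitely many through a.e.\ point, by the sparseness/Carleson packing condition) and large cubes (where $\nm\la\varphi f\ra\ci Q\nm\le \|f\|_1/|Q|$ sums geometrically over scales). The only cosmetic caveat is that your ``nested chain growing by a factor $1/\e$'' description of the large cubes presumes a dyadic sparse family, but so does the paper's own argument, and your fallback bound $\|f\|_1/|Q|$ per scale is exactly what is needed.
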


\begin{proof}
For $f\in L^1\ti{loc}(\R^N;\R^d)$ and for any cube $Q\subset \R^N$, the  set $\lla f \rra\ci Q$ is a bounded, convex, symmetric subset of $\R^d$. Thus the fact that for all $x\in \R^N$ the set $\bL\ci\cS f(x)$ is convex and symmetric follows immediately. 

But if $\cS$ is a sparse family, for almost all $x\in\R^N$ only finitely many $Q\in\cS$ such that the side length of $Q$ is bounded by one may contain $x$, so the a.e.~boundedness of the part of the sum $\bL\ci\cS f(x)$, where the summation goes over  $Q\in\cS$ such that the side length of $Q$ is bounded by one follows immediately. We are left to consider the part of the sum of $\bL\ci\cS f(x)$, where the summation is over only ``big" cubes  (such that the side length of $Q$ is at least two).

Without loss of generality we can assume that the compact support of $f$ lies only in one dyadic cube of side length two. Otherwise we split $f$ to finitely many functions having this property by using that its support is compact. We call this cube $Q\ci f$.

  Let $\cF$ be the collection of all dyadic cubes of side length  two or larger that intersect the support of $f$.  As $f\in L^1$ we can see immediately that 
\[
\sum\ci{Q\in \cF} \nm\langle \varphi_Q f\rangle\ci Q\nm\le  \sum\ci{Q\in \cD, Q_f\subset Q} \frac1{|Q|}\int\ci{Q\ci f} \nm f\nm \dd x\le 4^d\sum\ci{k=0}^\infty 2^{-dk} \|f\|_1\,.
\]
So the part of the sum of $\bL\ci\cS f(x)$, where the summation is over only ``big" cubes is uniformly bounded for such $f$.
\end{proof}

\subsubsection{John ellipsoids}
An ellipsoid in $\R^d$ is an image of the closed unit ball $B$ in $\R^d$ under a non-singular affine transformation. 

Recall, that for a convex body (i.e.~a compact convex set with non-empty interior) $K$  in $\R^d$ its \emph{John ellipsoid} is an ellipsoid of maximal volume contained in $K$. It is known that the John ellipsoid is unique, and that if $K$ is also symmetric, then its John ellipsoid $\cE=\cE\ci K$ is centered at $0$ and 
\begin{equation}
\label{JohnE}
\cE\subset K \subset \sqrt d \cE. 
\end{equation}

In the construction we will need  John ellipsoids for the sets $\lla f\rra\ci Q$. However, $\lla f\rra\ci Q$ does not have to have non-empty interior. 

\begin{lm}
\label{l:John-01}
Let $f\in L^1(Q)$ be non-trivial (i.e. $f(x)\ne 0$ on a set of positive measure). Then there exists unique subspace $E\subset \R^d$  containing $\lla f\rra\ci Q$ such that  $\lla f\rra\ci Q$ has non-empty interior in  $E$. 
\end{lm}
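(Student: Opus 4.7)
The natural candidate is $E:=\spn(\lla f\rra\ci Q)$, the linear span inside $\R^d$ of the convex body average, and the plan is to verify that this $E$ works and is the unique subspace that does.

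For uniqueness, suppose $E'\subset\R^d$ is any subspace containing $\lla f\rra\ci Q$ in which $\lla f\rra\ci Q$ has some relative interior point $x$. Since $\lla f\rra\ci Q$ is symmetric about the origin (the constraint $\|\varphi\|_\infty\le 1$ is preserved under $\varphi\mapsto-\varphi$), the point $-x$ is also a relative interior point, and by convexity so is the midpoint $0$; more precisely, if $U$ is a relative neighborhood of $x$ in $E'$ contained in $\lla f\rra\ci Q$, then $-U\subset\lla f\rra\ci Q$ by symmetry, and $\tfrac12 U+\tfrac12(-U)$ is a relative neighborhood of $0$ in $E'$ sitting inside $\lla f\rra\ci Q$. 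Hence every vector in $E'$ sufficiently close to $0$ lies in $\lla f\rra\ci Q$, which forces $E'\subset\spn(\lla f\rra\ci Q)=E$. The reverse inclusion holds automatically because any subspace containing $\lla f\rra\ci Q$ contains its linear span. Therefore $E'=E$.

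For existence, set $k:=\dim E$. Because $\lla f\rra\ci Q$ spans $E$ and $f$ is non-trivial, one can extract greedily a basis $v_1,\dots,v_k\in\lla f\rra\ci Q$ of $E$. Symmetry and convexity give $tv_i\in\lla f\rra\ci Q$ for every $|t|\le 1$ (write $tv_i=\tfrac{1+t}{2}v_i+\tfrac{1-t}{2}(-v_i)$), and a further convex combination with equal weights $1/k$ yields
\[
\frac{1}{k}\sum_{i=1}^{k}t_iv_i\in\lla f\rra\ci Q\qquad\text{for all }|t_i|\le 1.
\]
The left-hand side traces out a parallelepiped of full dimension $k$ inside $\lla f\rra\ci Q\subset E$, so $\lla f\rra\ci Q$ has non-empty interior in $E$, as required.

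The main thing to be careful about is the intertwined nature of the claim: the subspace $E$ is not specified in advance, so uniqueness and existence must be reconciled simultaneously by identifying the correct candidate. Once one notices that the symmetry of $\lla f\rra\ci Q$ pushes the origin into any admissible relative interior, uniqueness falls out, and existence reduces to the standard finite-dimensional convex-geometry fact that a symmetric convex body whose linear span equals the ambient space has non-empty interior in it. Non-triviality of $f$ enters only to guarantee $E\ne\{0\}$, so that the lemma is not vacuous.
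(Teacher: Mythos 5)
Your proof is correct, and it takes a somewhat different route from the paper's. The paper identifies $E$ as the range of the continuous linear map $F:L^\infty(Q)\to\R^d$, $F(\f)=\la \f f\ra\ci Q$, and gets the non-empty interior in one stroke from the open mapping theorem: the image of the open unit ball of $L^\infty$ under $F$ is open in $\ran F$. You instead work entirely inside $\R^d$ with the set $\lla f\rra\ci Q$ itself: you take $E=\spn(\lla f\rra\ci Q)$ (which is of course the same subspace as $\ran F$), extract a basis $v_1,\dots,v_k$ of $E$ from the set, and use symmetry and convexity to exhibit a full-dimensional parallelepiped $\bigl\{\tfrac1k\sum_i t_i v_i : |t_i|\le 1\bigr\}$ inside it. In effect you re-prove the finite-dimensional instance of the open mapping statement by hand; what this buys is a completely elementary, self-contained argument that does not invoke any functional analysis, at the cost of a few more lines. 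A further point in your favor is that you actually prove uniqueness (via the observation that symmetry and convexity force $0$ to be a relative interior point in any admissible $E'$, whence $E'=\spn(\lla f\rra\ci Q)$), whereas the paper's proof only addresses existence and leaves uniqueness implicit. Your closing remark about non-triviality of $f$ is harmless: it only matters for whether $E$ is the zero subspace, and either convention is consistent with the lemma.
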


\begin{proof}
The function $f\in L^1(Q)$ gives rise to a continuous non-zero linear map $F:L^{\infty} \to \R^d$ by $F(\varphi)=\la \varphi f \ra\ci Q$. Taking the subspace $E=\ran F$ of $\R^d$ and applying the open mapping theorem we conclude that the image of the open unit ball in $L^{\infty}$ is an open set in $E$.
\end{proof}

So, for a set $\lla f\rra\ci Q$ its John ellipsoid is defined as John ellipsoid in the subspace $E$.

\subsection{How to estimate convex set-valued sparse operators}
\label{s:CB-int}
Our sparse operators $\bL\ci\cS$ look like  very complicated objects, but the estimates of such operators is rather simple. 

Everything is based on the following simple lemma:

\begin{lm}
\label{l:ConvAv}
Let $f\in L^1(Q, \R^d)$ and let $g(x)\in \lla f\rra\ci Q$ a.e.~on $Q$. Then there exists a measurable function $K:Q\times Q\to \R$, $\|K\|_\infty \le |Q|^{-1}$ such that 
\[
g(x) = \int_{Q} K(x,y) f(y) \dd y. 
\]
\end{lm}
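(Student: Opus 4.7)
I will construct the kernel by approximation and weak$^*$ compactness, bypassing any pointwise measurable selection. The target is a function $\varphi\in L^\infty(Q\times Q)$ with $\|\varphi\|_\infty\le 1$ satisfying $\int_Q \varphi(x,y)f(y)\,\dd y = |Q|\,g(x)$ for a.e.~$x$; then $K(x,y):=|Q|^{-1}\varphi(x,y)$ automatically meets all requirements of the lemma.

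First, since $g$ is measurable and takes values in the compact set $\lla f\rra\ci Q\subset \R^d$, I approximate $g$ uniformly on $Q$ by simple functions $g_n=\sum_{i=1}^{m_n}\chi_{A_i^{(n)}}\, v_i^{(n)}$ with $v_i^{(n)}\in\lla f\rra\ci Q$: partition $\lla f\rra\ci Q$ into finitely many small Borel pieces, pull back by $g$, and pick a representative value in each piece. By the very definition of $\lla f\rra\ci Q$, for each $v_i^{(n)}$ I may choose a scalar function $\varphi_i^{(n)}\in L^\infty(Q;[-1,1])$ with $\La \varphi_i^{(n)} f\Ra\ci Q = v_i^{(n)}$. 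Setting
\[
\varphi_n(x,y) := \sum_{i=1}^{m_n}\chi_{A_i^{(n)}}(x)\,\varphi_i^{(n)}(y),
\]
I obtain $\varphi_n\in L^\infty(Q\times Q)$ with $\|\varphi_n\|_\infty\le 1$ and $|Q|^{-1}\int_Q\varphi_n(x,y)f(y)\,\dd y = g_n(x)$ for a.e.~$x$; note that $\varphi_n$ is manifestly jointly measurable.

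Next, $L^1(Q\times Q)$ being separable, the closed unit ball of $L^\infty(Q\times Q)$ is weak$^*$ compact and metrizable, so I extract a weak$^*$ convergent subsequence $\varphi_{n_k}\to\varphi^*$. Weak$^*$ closedness of the unit ball gives $\|\varphi^*\|_\infty\le 1$. To verify the integral constraint in the limit, I test against $h(x,y)=\chi_B(x)f_j(y)\in L^1(Q\times Q)$, for arbitrary measurable $B\subset Q$ and coordinate $j\in\{1,\dots,d\}$:
\[
\int_{B\times Q}\varphi_{n_k}(x,y)f_j(y)\,\dd x\,\dd y \;=\; |Q|\int_B (g_{n_k})_j(x)\,\dd x \;\longrightarrow\; |Q|\int_B g_j(x)\,\dd x
\]
by the uniform convergence $g_n\to g$, while by weak$^*$ convergence the same integral also tends to $\int_{B\times Q}\varphi^*(x,y)f_j(y)\,\dd x\,\dd y$. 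Since $B$ and $j$ are arbitrary, Fubini yields $|Q|^{-1}\int_Q\varphi^*(x,y)f(y)\,\dd y = g(x)$ for a.e.~$x$, which completes the construction.

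The only (mild) technical point is that the integral constraints tying $\varphi_n$ to $g_n$ pass to the weak$^*$ limit; this works precisely because $f\in L^1(Q)$ makes the natural test functions $\chi_B(x)f_j(y)$ genuinely lie in $L^1(Q\times Q)$. Compared with invoking a measurable selection theorem to pick $\varphi_x$ pointwise in $x$ and then promoting to joint measurability---a route with its own subtleties in view of the non-separability of $L^\infty$---this weak$^*$ compactness argument keeps joint measurability explicit at every stage.
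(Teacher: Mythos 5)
Your argument is correct and is essentially the paper's own proof: approximate $g$ uniformly by simple functions with values in $\lla f\rra\ci Q$, build the kernels explicitly in the simple case, and pass to a weak$^*$ limit in $L^\infty(Q\times Q)$. You have merely filled in the details (joint measurability, the choice of test functions $\chi\ci B(x)f_j(y)\in L^1(Q\times Q)$, Fubini) that the paper's two-sentence sketch leaves implicit.
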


\begin{proof}
The statement is trivial if $g$ is a simple function (i.e.~a measurable function taking finitely many values). For a general $g$, approximating it by simple functions $g_n$, $g_n \rightrightarrows g$ and taking a weak* limit point (say in $L^\infty$) of the corresponding kernels $K_n$ completes the proof. 
%
\end{proof}

Using this lemma we can see that to estimate a convex body-valued sparse operator $\bL=\bL\ci\cS$
\[
\bL\ci\cS f = \sum_{Q\in\cS} \lla f\rra\ci Q\1\ci Q, 
\]
one needs to find a uniform bound on all operators of form 
\begin{align}
\label{sparse-03}
f\mapsto \sum_{Q\in\cS} \int K\ci Q (x,y) f(y) dy, 
\end{align}
where kernels $K\ci Q$ are supported on $Q\times Q$ and satisfy $\| K\ci Q\|_\infty \le |Q|^{-1}$. 

The latter problem lies in the realm of harmonic analysis. 

\medskip

Notice that the statement of Lemma \ref{l:ConvAv} can be pushed a little bit further. Namely, we can claim the following.

\begin{lm}
\label{l:ConvAvrank}
Let $f\in L^1(Q, \R^d)$. Then there exist real measurable functions $\{\varphi_i\}_{i=1}^d$ supported on $Q$, $\|\varphi_i\|_\infty\le 1$ such that for any $g$ such that  $g(x)\in \lla f\rra\ci Q$ a.e.~on $Q$ there exist   real measurable functions $\{\psi_i\}_{i=1}^d$, $\|\psi_i\|_\infty \le C(d)$, such that 
\[
g(x) =\sum_{i=1}^d \psi_i(x) \langle \varphi_i f\rangle\ci Q. 
\]
\end{lm}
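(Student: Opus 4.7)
The plan is to choose $\varphi_1,\ldots,\varphi_d$ so that the vectors $\langle\varphi_i f\rangle\ci Q$ are (up to normalization) the principal semi-axes of the John ellipsoid of $\lla f\rra\ci Q$. Once the shape of $\lla f\rra\ci Q$ is captured by such a ``good basis'', any point inside it will decompose with bounded coefficients.

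First, apply Lemma \ref{l:John-01} to get the subspace $E\subset\R^d$ (of some dimension $k\le d$) in which $\lla f\rra\ci Q$ has non-empty interior. Since $\lla f\rra\ci Q$ is convex, symmetric, and compact, we let $\cE\subset E$ be its John ellipsoid; it is centered at $0$, and by \eqref{JohnE} we have the inclusions
\[
\cE\subset\lla f\rra\ci Q\subset\sqrt{k}\,\cE\subset\sqrt{d}\,\cE.
\]
Let $v_1,\ldots,v_k\in E$ be the principal semi-axes of $\cE$: they are mutually orthogonal with respect to the Euclidean structure on $E$, they lie on the boundary of $\cE$, and $\cE=\{\sum_{i=1}^k t_i v_i:\sum t_i^2\le 1\}$. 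Each $v_i$ belongs to $\cE\subset\lla f\rra\ci Q$, so by the very definition of the convex body average \eqref{C-B-Av} we may pick $\varphi_i\in L^\infty(Q)$ with $\|\varphi_i\|_\infty\le 1$ such that $v_i=\langle\varphi_i f\rangle\ci Q$. For $i>k$, set $\varphi_i\equiv 0$ (and later $\psi_i\equiv 0$), so we still have exactly $d$ functions. Note that the $\varphi_i$'s depend only on $f$, not on $g$, as required.

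Now suppose $g:Q\to\R^d$ is measurable with $g(x)\in\lla f\rra\ci Q$ a.e. Since $\lla f\rra\ci Q\subset E$ and $\lla f\rra\ci Q\subset\sqrt{d}\,\cE$, we can define
\[
\psi_i(x):=\frac{\langle g(x),v_i\rangle}{\|v_i\|^2},\qquad i=1,\ldots,k,
\]
which are measurable because $g$ is. By the orthogonality of the $v_i$'s in $E$, we have $g(x)=\sum_{i=1}^k\psi_i(x)v_i$ for a.e.~$x$; that is, $g(x)=\sum_{i=1}^d\psi_i(x)\langle\varphi_i f\rangle\ci Q$. Moreover, writing $g(x)=\sqrt{d}\,\sum_i t_i(x)v_i$ with $\sum t_i(x)^2\le 1$ (using $g(x)\in\sqrt{d}\,\cE$), one sees $\psi_i(x)=\sqrt{d}\,t_i(x)$, so $|\psi_i(x)|\le\sqrt{d}$, and thus $\|\psi_i\|_\infty\le C(d)=\sqrt{d}$.

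The main technical subtlety, and the only point that needs real care, is identifying the right subspace $E$ and handling the case when the John ellipsoid is degenerate in $\R^d$; Lemma \ref{l:John-01} and the uniqueness/orthogonality of the John axes take care of this. Everything else reduces to the two algebraic facts that (i) $\sqrt{d}$ times the John ellipsoid contains the body, and (ii) a vector inside an ellipsoid has bounded coordinates in a basis of semi-axes.
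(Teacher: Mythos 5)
Your proof is correct and follows essentially the same route as the paper's: realize the principal semi-axes of the John ellipsoid of $\lla f\rra\ci Q$ as averages $\langle\varphi_i f\rangle\ci Q$ with $\|\varphi_i\|_\infty\le 1$, then use $\lla f\rra\ci Q\subset\sqrt d\,\cE$ to expand $g(x)$ in this basis with coefficients bounded by $\sqrt d$, padding with zeros in the degenerate case. Your version is slightly more explicit than the paper's about measurability of the $\psi_i$ (via the coordinate-functional formula) and about the reduction to the subspace $E$ from Lemma \ref{l:John-01}, but the underlying argument is identical.
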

\begin{proof}
We assume first that John ellipsoid $\cE$ of $\lla f\rra\ci Q$ is $d$-dimensional. Let $\{g_i\}_{i=1}^d$, $g_i\in\cE$ be vectors corresponding to its principal axis. Since $g_i\in \cE\subset \lla f\rra\ci Q$,
there exist real functions $\varphi_i$ supported on $Q$ that
\[
g_i= \langle \varphi_i f\rangle_Q, \,\qquad  \|\varphi\|_\infty \le 1, \, i=1, \dots, d\,.
\]
On the other hand every measurable vector function $g$ on $Q$ with values in $\lla f\rra\ci Q$ has the form
\[
g(x)= \sum_{i=1}^d a_i^{(g)}(x) g_i, \, \,\text{a.e}\, \,x\,,
\]
where $\{a_i^{(g)}(x)\}$ are measurable functions, and $\sum_{i=1}^d |a_i^{(g)}(x)|^2 \le d$. This is by \eqref{JohnE}.
If $\cE$ has dimension less than $d$ we just need less than $d$ vector functions $g_i$, so we can choose the rest of $g_i'$s to be zero.
Therefore, the claim of the lemma follows.
\end{proof}

\begin{rem} Using this lemma we can see that to estimate a convex body-valued sparse operator $\bL=\bL\ci\cS$
\[
\bL\ci\cS f = \sum_{Q\in\cS} \lla f\rra\ci Q\1\ci Q, 
\]
one needs to find a uniform bound on all operators of form 
\[
f\mapsto \sum_{Q\in\cS} \psi\ci Q(x)\langle \phi\ci Q f \rangle\ci Q, 
\]
where real functions  $\varphi\ci Q, \psi\ci Q$ are supported on $Q$ and satisfy $\| \varphi\ci Q\|_\infty \le 1, \| \varphi\ci Q\|_\infty \le 1$.  

In other words, in estimating $\bL\ci\cS f $ we can always think about estimating uniformly operators 
\eqref{sparse-03}
with the extra property that all $K\ci Q$ are rank one operators.
\end{rem}

\begin{rem} In terms of bilinear estimates we can rephrase the previous remark as follows. To estimate $| (\bL\ci\cS f , g)|$ it is sufficient to have an estimate of  bilinear forms 
\[
\sum_{Q\in\cS} |\langle\psi\ci Q g\rangle\ci Q\langle \varphi\ci Q f \rangle_Q| |Q|
\]
uniform in real functions $\varphi\ci Q, \psi\ci Q$  having their $L^\infty$ norm bounded by one.
\end{rem}

\section{Domination of vector-valued singular integral operators by sparse operators}
The main result of this section is in Subsection \ref{s:Sc-vec}. Subsections \ref{s:dom-CZO} and \ref{s:HaarShift} essentially just give a different presentation of known results. These subsections are presented simply for the reader's convenience. 
\subsection{From scalar to vector domination}
\label{s:Sc-vec}
Informally speaking, if a scalar operator $T$ can be dominated by a sparse one, the same should hold for its  vector-valued version $T\otimes \bI_d$. Unfortunately, we are not able to prove a general theorem to that extend. However we are able to prove that  a scalar induction step (that can be used to prove the sparse domination in all known scalar cases) implies the corresponding induction step for vector valued operators.

We will need the following definition
\begin{df}
Let $\cG$ and $\overline \cG$ be two collections of disjoint dyadic cubes. We say that $\overline\cG$ covers (is covering)  $\cG$ if  for any $Q\in \cG$ one can find $R\in\overline\cG$ such that $Q\subset R$. 
\end{df}
In the  the language of stopping times, this just means a pointwise earlier stopping time. 

The lemma below is universal for any sensible linear operator $T$, such as the Calder\'{o}n--Zygmund operator, Haar shift or paraproduct. 

\begin{lm}
\label{lemma_scalar_to_vector}
For $r\ge 1$ and  $Q\in\cD$ denote  $Q'=rQ$. 

Let $T$ be a linear operator such that for any $\e>0$ and for  any  $f\in L^1(\R^N; \R)$, supported on the cube $Q'_0$  there exists a collection $\cG
$  of disjoint dyadic subcubes of $Q_0$ satisfying
\begin{enumerate}
\item $\displaystyle \sum_{Q\in\cG} |Q| \le \e |Q_0|$. 

\item $\displaystyle \Bigl|Tf(x) - \sum_{Q\in\cG}  \1\ci{Q} T( f \1\ci{Q'})\Bigr| \le C \e^{-1}\La |f| \Ra\ci{Q'_0}     $ a.e.~on $Q_0$, where $C=C(T, N)$ does not depend on $f$. 

\item For any collection $\overline\cG$ of disjoint dyadic subcubes of $Q_0$ that covers $\cG$
\[
\Bigl|Tf(x) - \sum_{Q\in\overline\cG}  \1\ci{Q} T( f \1\ci{Q'})\Bigr| \le C \e^{-1}\La |f| \Ra\ci{Q'_0}  \qquad\text{a.e.~on }Q_0. 
\]
\end{enumerate}

Then for any $0<\delta<1$ and for any vector-valued functions $f\in L^1(\R^N;\R^d)$, supported on $Q'_0$  there exists a family $\cG_1$ of disjoint dyadic subcubes of $Q_0$ such that 
\begin{align}
\label{MeasStop-02}
\sum_{Q\in\cG_1} |Q|& \le \delta |Q_0| 
\intertext{and}
\label{SpDom-step-02}
T f (x) & \in C \lla f\rra\ci{Q'_0} + \sum_{Q \in\cG_1} \1\ci{Q}T( f \1\ci{Q'})\qquad \text{a.e.~on } Q_0, 
\end{align}
where  $C=C(T, N, d, \delta)$  (here we slightly abuse notation and use $T$ instead of $T\otimes \bI_d$). 
\end{lm}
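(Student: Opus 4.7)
The plan is to reduce the vector-valued statement to the given scalar hypothesis by choosing a basis of $\R^d$ adapted to the convex body $\lla f\rra\ci{Q'_0}$. First assume $\lla f\rra\ci{Q'_0}$ has nonempty interior in $\R^d$; the degenerate case is handled identically in the subspace $E$ from Lemma \ref{l:John-01} with $d$ replaced by $\dim E$. Let $\cE$ be the John ellipsoid of $\lla f\rra\ci{Q'_0}$, and let $e_1,\dots,e_d$ be an orthonormal basis of its principal directions with corresponding semi-axis lengths $\lambda_1,\dots,\lambda_d>0$. Set $f^i(x):=f(x)\cdot e_i$, a scalar $L^1$ function supported on $Q'_0$.

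Apply the scalar hypothesis to each $f^i$ with parameter $\e':=\delta/d$, obtaining disjoint stopping families $\cG^i\subset\cD(Q_0)$ with $\sum_{Q\in\cG^i}|Q|\le\e'|Q_0|$. Take $\cG_1$ to be the family of maximal cubes in $\bigcup_{i=1}^d\cG^i$. By maximality, $\cG_1$ is a disjoint collection of dyadic subcubes of $Q_0$ that covers each $\cG^i$, and
\[
\sum_{Q\in\cG_1}|Q|=\Bigl|\bigcup_{i}\bigcup_{Q\in\cG^i}Q\Bigr|\le\sum_{i}\sum_{Q\in\cG^i}|Q|\le d\e'|Q_0|=\delta|Q_0|,
\]
which yields \eqref{MeasStop-02}. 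Applying hypothesis (iii) to each $f^i$ with this common covering family, the scalar residuals
\[
r^i(x):=Tf^i(x)-\sum_{Q\in\cG_1}\1\ci Q(x)\,T(f^i\1\ci{Q'})(x)
\]
satisfy $|r^i(x)|\le C_1(\e')^{-1}\la|f^i|\ra\ci{Q'_0}$ a.e.\ on $Q_0$, with $C_1=C_1(T,N)$. It is precisely here that property (iii), and not only (ii), is used: the individual families $\cG^i$ differ, and one needs a single stopping family common to all coordinates.

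By linearity of $T$,
\[
r(x):=Tf(x)-\sum_{Q\in\cG_1}\1\ci Q(x)\,T(f\1\ci{Q'})(x)=\sum_{i=1}^d r^i(x)\,e_i,
\]
so the proof reduces to showing $r(x)\in C\lla f\rra\ci{Q'_0}$ pointwise. The key geometric observation is that $\la|f^i|\ra\ci{Q'_0}$ equals the support function of $\lla f\rra\ci{Q'_0}$ in direction $e_i$: from the definition \eqref{C-B-Av},
\[
\sup\bigl\{v\cdot e_i:v\in\lla f\rra\ci{Q'_0}\bigr\}=\sup_{\|\f\|_\infty\le 1}\la\f f^i\ra\ci{Q'_0}=\la|f^i|\ra\ci{Q'_0}.
\]
Combined with the inclusion $\lla f\rra\ci{Q'_0}\subset\sqrt d\,\cE$ from \eqref{JohnE}, this gives $\la|f^i|\ra\ci{Q'_0}\le\sqrt d\,\lambda_i$, and hence $|r^i(x)/\lambda_i|\le C_1\sqrt d\,(\e')^{-1}$. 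Summing the squares over $i$ and setting $C:=C_1d/\e'=C_1 d^2/\delta$ produces $\sum_i(r^i(x)/(C\lambda_i))^2\le 1$, which is exactly the statement $r(x)\in C\cE\subset C\lla f\rra\ci{Q'_0}$; this gives \eqref{SpDom-step-02}.

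The conceptually hardest step is picking the right basis. Running the same argument in an arbitrary orthonormal frame would yield only a Euclidean norm bound on $r(x)$, placing it inside a ball that may be much larger than $\lla f\rra\ci{Q'_0}$ whenever the convex body is thin in some directions. John's theorem is exactly what allows us to align the frame so that the scalar widths $\la|f^i|\ra\ci{Q'_0}$ are comparable, up to the factor $\sqrt d$, to the principal semi-widths $\lambda_i$ of $\lla f\rra\ci{Q'_0}$; this is what forces the Euclidean error to fit inside a bounded multiple of the convex body itself.
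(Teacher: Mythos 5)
Your proposal is correct and follows essentially the same route as the paper's proof: principal axes of the John ellipsoid of $\lla f\rra\ci{Q'_0}$, the observation that $\la|f\cdot e_i|\ra\ci{Q'_0}$ is the support function value (hence $\le\sqrt d\,\lambda_i$ by \eqref{JohnE}), application of the scalar hypothesis coordinatewise with $\e=\delta/d$, passage to the maximal cubes of $\bigcup_i\cG^i$ via hypothesis (iii), and the final inclusion of the residual in a dilate of $\cE\subset\lla f\rra\ci{Q'_0}$. The only cosmetic difference is that you bound the residual by summing squares directly, while the paper routes it through the box $P\subset\sqrt d\,\cE$; these are the same estimate.
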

The assumptions of this lemma are essentially the properties used in the induction step in the construction of sparse domination in \cite{L_2016}, only they are written in a slightly different way. 

The case $r=1$ will be used to get domination for dyadically localized operators, like Haar shifts and paraproducts. The case $r>1$ can be used to get the domination operators that are not dyadically localized, like  \cz operators.

\begin{proof}[Proof of Lemma \ref{lemma_scalar_to_vector}]
Consider the representation of the John ellipsoid $\cE$ of $\lla f \rra\ci{Q'_0}$ in principal axes, i.e.~let $e_1, e_2, \ldots e_d$ be an orthonormal basis in $\R^d$ and $\alpha_k\in [0,\infty)$ such that 
\begin{align}
\label{John-01}
\cE=\Big\{\sum_{k=1}^d x_k \alpha_k e_k : x_k\in\R, \ \sum_{k=1}^d x_k^2 \le 1 \Bigr\} .
\end{align}

Let $f_k(x):= (f(x), e_k)\ci{\R^d}$. Since $\lla f\rra\ci{Q'_0} \subset \sqrt{d}\cE$, one can conclude that $\La |f_k|\Ra\ci{Q'_0} \le \sqrt{d}\alpha_k$ (consider $\La \f_k f_k\Ra\ci{Q'_0}$ with $\f_k=\sign f_k$).

Applying the hypothesis with $\e = \delta d^{-1}$ to each $f_k$, we will get for each $f_k$ a collection $\cG_k$ of dyadic subcubes of $Q_0$ such that a.e.~on $Q_0$
\begin{align*}
|Tf_k(x)| \le C \sqrt{d} \alpha_k d + \sum_{Q\in\cG_k}  \1\ci{Q} |T( f_k \1\ci{Q'})| ;
\end{align*}
here we used the estimate $\La |f_k|\Ra\ci{Q'_0} \le \sqrt{d}\alpha_k$. 

Let $\cG$ be the collection of maximal cubes in the collection $\bigcup_{k=1}^d \cG_k$. Since $\cG$ covers any of $\cG_k$, part \cond3 of the hypothesis implies that for all $k$ we have a.e.~on $Q_0$
\begin{align*}
|Tf_k(x)| \le C d\sqrt{d} \alpha_k + \sum_{Q\in\cG}  \1\ci{Q} |T( f_k \1\ci{Q'})| . 
\end{align*}

Then clearly a.e.~on $Q_0$
\begin{align*}
Tf(x) \in C d\sqrt{d} P + \sum_{Q\in\cG}  \1\ci{Q} T( f \1\ci{Q'})
\end{align*}
where $P$ is the ``box''
\begin{align*}
P:= \Bigl\{\sum_k x_k\alpha_k e_k : x_k\in[-1, 1]\Bigr\}. 
\end{align*}
Since trivially $P\subset \sqrt{d}\cE$, where $\cE$ is the John ellipsoid \eqref{John-01}, we get that a.e.~on $Q_0$
\begin{align*}
Tf(x) \in Cd^2 \cE + \sum_{Q\in\cG}  \1\ci{Q} T( f \1\ci{Q'}) \subset 
Cd^2 \lla f\rra\ci{3Q_0}  + \sum_{Q\in\cG}  \1\ci{Q} T( f \1\ci{Q'}) .
\end{align*}
Noticing that 
\begin{align*}
\sum_{Q\in\cG} |Q| \le \sum_{k=1}^d \sum_{Q\in\cG_k} |Q| \le d \cdot (2d)^{-1} =1/2
\end{align*}
completes the proof. \end{proof}

\begin{rem}
\label{r:ArbFiltr}
One can see from the proof that Lemma \ref{lemma_scalar_to_vector} holds not just for a dyadic filtration, but for any \emph{atomic} filtration, i.e.~a filtration where on each step a ``cube'' $Q$ splits into finitely (or countably) many ``cubes''. In particular, this lemma holds for any collection $\cD^{k,r}:=\bigcup_{j\in\Z} \cD_{k+(r+1)j}$
\end{rem}

\subsection{Domination of \cz operators}
\label{s:dom-CZO}

Let us recall some definition. Let $\omega$ be a \emph{modulus  of continuity}, i.e. an increasing subadditive function on $[0,\infty)$ satisfying $\omega(0)=0$.  A bounded (in a scalar $L^2=L^2(\R^N)$) operator $T$ is called an $\omega$-\cz operator, if for any $f\in L^2$ and $x\notin\supp f$
\[
Tf(x) = \int_{\R^N} K(x,y) f(y) dy
\]
and the kernel $K$ satisfy the following size and smoothens conditions
\begin{align*}
|K(x,y) | &\le C |x-y|^{-N}  \\
|K(x, y) - K(x' y)| + |K(y, x) - K(y, x') | &\le \omega \left( \frac{|x-x'|}{|x-y|} \right) |x-y|^{-N} \quad \text{if }|x-x'|\le 2 |x-y|  .
\end{align*}

We say that the modulus of continuity $\omega$ satisfies the Dini condition if
\[
\|\omega\|\ti{Dini} := \int_0^1 \omega(x) \frac{\dd x }{x} <\infty. 
\]

\begin{thm}
\label{t:SpDom-01}
Let $T$ be an $\omega$-\cz operator with modulus of continuity $\omega$ satisfying the Dini 
condition.

Then for a compactly supported $f\in L^1(\R^N;\R^d)$ there exists an $\eta$-sparse (in the sense of Definition \ref{df:w-Sparse}) family $\cS=\cS(f)$ with $\eta = 3^{-N} /2$ and such that 
%
\begin{align}
\label{SpDom-01}
Tf(x) \in C  \bL\ci{\cS} (x),  
\end{align}
where 
the constant $C$ depends only on the operator $T$ and dimensions $N$ and $d$. 
\end{thm}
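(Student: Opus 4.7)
The plan is to combine the standard scalar sparse domination for $\omega$-\cz operators with Dini modulus (due to Lerner, Lacey, Conde-Alonso--Rey, Hyt\"onen--Roncal--Tapiola) with the vectorial lifting that Lemma \ref{lemma_scalar_to_vector} provides. The vector result then follows by iterating the induction step.

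\textbf{Step 1: Verify the scalar induction step.} Fix a dyadic cube $Q_0$ and write $Q'_0:=3Q_0$. For a scalar $f\in L^1$ supported on $Q'_0$ and $\e>0$, use the classical stopping-time argument built on the weak $L^1$ boundedness of the grand maximal truncation $\mathcal M_T f$ together with the Dini assumption. Choose $\cG$ to be the maximal dyadic subcubes of $Q_0$ on which either $\langle|f|\rangle_{Q'}>\e^{-1}\langle|f|\rangle_{Q'_0}$ or $\mathcal M_T f>C\e^{-1}\langle|f|\rangle_{Q'_0}$. The weak $L^1$ bound gives $\sum_{Q\in\cG}|Q|\le\e|Q_0|$, and outside these cubes the kernel estimates (summed via $\|\omega\|\ti{Dini}$) yield
\[
\Bigl|Tf(x) - \sum_{Q\in\cG} \mathbf 1\ci Q(x)\,T(f\mathbf 1\ci{Q'})(x)\Bigr|\le C\e^{-1}\langle|f|\rangle\ci{Q'_0}\,.
\]
The same bound persists if $\cG$ is enlarged to any covering collection $\overline\cG$ of disjoint dyadic cubes: shrinking each ``good'' piece to lie inside the bigger cubes only removes terms from the summed tail estimates. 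Thus the three hypotheses of Lemma \ref{lemma_scalar_to_vector} hold with $r=3$.

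\textbf{Step 2: Lift to the vector setting.} Applying Lemma \ref{lemma_scalar_to_vector} with $\delta=1/2$ and $r=3$, for every $\R^d$-valued $f\in L^1$ supported on $Q'_0=3Q_0$ we obtain a family $\cG_1$ of disjoint dyadic subcubes of $Q_0$ with $\sum_{Q\in\cG_1}|Q|\le\tfrac12|Q_0|$ and
\[
Tf(x)\in C\,\lla f\rra\ci{Q'_0}+\sum_{Q\in\cG_1}\mathbf 1\ci Q(x)\,T(f\mathbf 1\ci{Q'})(x)\qquad \text{a.e.\ on }Q_0,
\]
with $C=C(T,N,d)$.

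\textbf{Step 3: Iterate.} Starting from a cube $Q_0$ so large that $3Q_0\supset\supp f$, apply Step 2 recursively: on each $Q\in\cG_k$, apply the induction step to $f\mathbf 1\ci{3Q}$ on $Q$ (which is legitimate because $\supp(f\mathbf 1\ci{3Q})\subset 3Q$). This yields a tree $\cS=\bigcup_{k\ge0}\cG_k$ of dyadic cubes (with $\cG_0=\{Q_0\}$) for which the children of each $Q\in\cS$ have total measure at most $\tfrac12|Q|$. Setting $E\ci Q:=Q\setminus\bigcup\{R\in\cS: R\text{ a child of }Q\text{ in }\cS\}$ gives disjoint sets with $|E\ci Q|\ge\tfrac12|Q|$, so $\cS$ is $\tfrac12$-sparse in the sense of Definition \ref{df:w-Sparse}. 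Telescoping the inclusions along the tree yields
\[
Tf(x)\in C\sum_{Q\in\cS}\lla f\rra\ci{3Q}\mathbf 1\ci Q(x)\qquad\text{a.e.\ on }\R^N.
\]
Finally, the three-lattice trick replaces each $\lla f\rra\ci{3Q}$ by a finite sum of $\lla f\rra\ci{R}$ with $R$ dyadic in one of $3^N$ auxiliary lattices; regrouping and using that the union of $3^N$ weakly $\tfrac12$-sparse families remains weakly $3^{-N}/2$-sparse (after rescaling via Definition \ref{df:w-Sparse}) produces a single sparse family with the stated sparsity constant $\eta=3^{-N}/2$, as required by \eqref{SpDom-01}.

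\textbf{Main obstacle.} The real work is in Step 1, in particular verifying hypothesis \cond3 of Lemma \ref{lemma_scalar_to_vector}: the remainder must remain pointwise bounded by the same $\e^{-1}\langle|f|\rangle\ci{Q'_0}$ when the stopping family $\cG$ is replaced by an arbitrary larger covering family $\overline\cG$. The Dini condition is exactly what makes the kernel tail contributions summable in the required uniform way; without it, the coarsening could inflate the error. Everything else in the proof is a formal lifting from the scalar to the vector case via the John ellipsoid mechanism encoded in Lemma \ref{lemma_scalar_to_vector}.
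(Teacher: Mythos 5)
Your proposal follows essentially the same route as the paper: verify the scalar induction step for Dini--\cz operators via the grand maximal truncation $\cM\ci T$ and weak $(1,1)$ bounds, lift it to $\R^d$-valued functions through Lemma \ref{lemma_scalar_to_vector}, and iterate. Two points in your write-up need repair, however. First, the iteration in Step 3 only produces the inclusion $Tf(x)\in C\sum_{Q\in\cS}\lla f\rra\ci{3Q}\1\ci Q(x)$ \emph{a.e.\ on $Q_0$}, since every application of the induction step is localized to $Q_0$ and its descendants; you assert the inclusion a.e.\ on $\R^N$ without accounting for $Tf$ outside $Q_0$. Since $T$ is a singular integral, $Tf$ does not vanish off $Q_0$, and this part must be dominated separately: the paper does it by observing that for $x\in 3^{n+1}Q_0\setminus 3^nQ_0$ one has $Tf(x)\in C\lla f\rra\ci{3^{n+1}Q_0}$ (kernel size estimate) and then adjoining the cubes $3^nQ_0$, $n\ge 2$, to the sparse family, which does not destroy sparseness.

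Second, your closing three-lattice maneuver is both unnecessary and the shakiest part of the bookkeeping: the theorem only asks for a \emph{weakly} sparse family in the sense of Definition \ref{df:w-Sparse}, with no dyadic structure required. The paper simply replaces $\1\ci Q$ by $\1\ci{3Q}$ (which enlarges the right-hand side) and notes that if $\cG$ is a dyadic $\tfrac12$-sparse family then $\{3Q:Q\in\cG\}$ is weakly $3^{-N}/2$-sparse --- this is exactly where the constant $\eta=3^{-N}/2$ in the statement comes from, and your argument as written does not actually produce that constant. With these two adjustments (the tail cubes $3^nQ_0$ and the direct tripling instead of the three-lattice reduction), your proof coincides with the paper's.
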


%

\subsubsection{Proof of Theorem \ref{t:SpDom-01}}

The hypothesis of Lemma \ref{lemma_scalar_to_vector} for Calder\'{o}n--Zygmund operators with $r=3$ (i.e.,~$Q'=3Q$) was essentially proved in \cite{L_2016}, see estimate (3.4) there. It was stated for $\e=1/2$, but the proof works for arbitrary $\e$. Hypothesis \cond3 of the lemma was not explicitly proved in \cite{L_2016}, but can be easily seen from the proof there. For the convenience of the reader we present a proof of the hypotheses of Lemma \ref{lemma_scalar_to_vector} for the case of Calder\'{o}n-Zygmund operators, essentially Lerner's argument, in Section \ref{s:LernArg} below.

Assume that the hypotheses of Lemma \ref{lemma_scalar_to_vector} are satisfied for any dyadic cube. 

 Take a compactly supported $f\in L^1(\R^N;\R^d)$ and a cube $Q_0$, $\supp f\subset \frac12 Q_0$. Applying Lemma \ref{lemma_scalar_to_vector} with $r=2$ and $\delta=1/2$ we get the family $\cG_1$ of dyadic subcubes of $G_0$ such that \eqref{MeasStop-02} and \eqref{SpDom-step-02} hold. 

We then apply Lemma \ref{lemma_scalar_to_vector} to each cube $Q\in\cG_1$ (with function $\1\ci{3Q} f$) to get the family $\cG_2$, and so on. 

Trivially, the family $\cG:=\{Q_0\}\bigcup_{n\ge 1} \cG_n$ is a dyadic $\e$-sparse family with $\e=1/2$, and so it is $\eta$-sparse family in the sense of Definition \ref{df:w-Sparse} with $\eta=1/2$. Since
\[
\lim_{n\to\infty} \sum_{Q\in\cG_n} |Q|  = 0, 
\]
we can conclude that a.e.~on $Q_0$
\begin{align*}
 T f (x) \in C \sum_{Q\in \cG} \lla f \rra\ci{3Q}\1\ci Q(x) 
\end{align*}
To dominate $Tf(x)$ outside of $Q_0$, we notice that for $n\ge 0$ and $x\in 3^{n+1}Q_0 \setminus 3^{n}Q_0$
\[
Tf(x) \in C \lla f\rra\ci{3^{n+1}Q_0}, 
\]
so a.e.~on $\R^N$
\begin{align}
\label{SpDom-02a}
T f (x) \in C \sum_{Q\in \cG} \lla f \rra\ci{3Q}\1\ci Q(x) + C\sum_{n\ge 1} \lla f\rra\ci{3^n Q_0} \1\ci{3^n Q_0}. 
\end{align}
Note, that the inclusion will hold if in the first sum we replace $\1\ci{Q}$ by $\1\ci{3Q}$ (the right hand side will be bigger). As we discussed before, the collection $\cG$ is a dyadic $\eta$-sparse family with $\eta=1/2$, so the collection $\{3Q:Q\in\cG\}$ is $\eta$-sparse family with $\eta= 3^{-N} /2$. If we add to this collection cubes $3^n Q_0$, $n\ge 2$, it will remain $\eta$-sparse (with the same $\eta=3^{-N}/2$). 

So the collection $\cS:= \{3Q:Q\in\cG\}\cup\{3^nQ_0:n\ge 2\}$ is $\eta$-sparse, and \eqref{SpDom-01} trivially holds because of \eqref{SpDom-02a}.

Thus we proved Theorem \ref{SpDom-01}, assuming that hypotheses of Lemma \ref{lemma_scalar_to_vector} are satisfied. \hfill\qed

\subsubsection{Assumptions  of Lemma \ref{lemma_scalar_to_vector} are satisfied for \cz operators } 
\label{s:LernArg}
The proof below is borrowed from \cite{L_2016}. We present it here only for the reader's convenience. 

Consider the maximal operator $\cM\ci T$, introduced in \cite{L_2016}, 
\begin{align}
\label{M_T}
\cM\ci T f (x) := \sup_{Q:x\in Q} \esssup_{\xi\in Q} |T (\1\ci{\R^N \setminus 3Q} f) (\xi)|
\end{align}
We need the following Lemma, see \cite[Lemma 3.2]{L_2016}

\begin{lm}
\label{l:Lern-3.2}
Let $T$ be an  $\omega$-\cz operator  with $\omega$ satisfying the Dini condition. Then for $f\in L^1(\R^N)$ we have a.e.~on $Q_0$
\begin{enumerate}
\item $\displaystyle |T(\1\ci{3Q_0} f(x)| \le   C_N \|T\|\ci{L^1\to L^{1,\infty}} |f(x)| + \cM\ci T f(x)$.   
\item $\displaystyle\cM\ci T f(x) \le C\ci N (\|\omega\|\ti{Dini} + C(T) ) Mf(x) + T^\sharp f(x) 
$. 
\end{enumerate}
Here $M$ is the Hardy--Littlewood maximal operator and $T^\sharp$ is the maximal truncation of $T$, 
\[
T^\sharp f(x) = \sup_{\e>0} \Bigl|\int_{|x-y|>\e} K(x,y) f(y) dy \Bigr| .
\]
\end{lm}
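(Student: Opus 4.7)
The plan is to prove (1) by a Cotlar-type argument combining the weak-$(1,1)$ boundedness of $T$ (standard for $\omega$-\cz operators whose modulus satisfies Dini, via the Calder\'on--Zygmund decomposition) with Lebesgue's differentiation theorem, and to prove (2) by purely pointwise kernel estimates exploiting the Dini condition. For part (1), I would fix a point $x\in Q_0$ that is simultaneously a Lebesgue point of $|f|$, a Lebesgue point of $T(\1\ci{3Q_0}f)$, and at which $T(\1\ci{3Q_0}f)(x)$ is defined; almost every $x\in Q_0$ qualifies. For any cube $Q$ with $x\in Q\subset Q_0$ I split $\1\ci{3Q_0}f=\1\ci{3Q}f+\1\ci{3Q_0\setminus 3Q}f$. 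Since $3Q\subset 3Q_0$ the tail equals $\1\ci{\R^N\setminus 3Q}f-\1\ci{\R^N\setminus 3Q_0}f$, so evaluated at $x$ it is bounded by $2\cM\ci T f(x)$ by applying the definition of $\cM\ci T$ to the cubes $Q\ni x$ and $Q_0\ni x$. For the local piece $T(\1\ci{3Q}f)(x)$, the weak $(1,1)$ bound yields a subset of $3Q$ of measure at least $|3Q|/2$ on which $|T(\1\ci{3Q}f)|\le 2C_N\|T\|\ci{L^1\to L^{1,\infty}}\La|f|\Ra\ci{3Q}$; equivalently, the median of $|T(\1\ci{3Q}f)|$ on $3Q$ is controlled by this quantity. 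Shrinking $Q$ to $\{x\}$ along a sequence, Lebesgue differentiation gives $\La|f|\Ra\ci{3Q}\to|f(x)|$, and the Lebesgue-point property of $T(\1\ci{3Q_0}f)$ at $x$ converts these distributional estimates into the pointwise bound (1).

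For part (2) the argument is entirely pointwise. Fix $x\in\R^N$, a cube $Q\ni x$, and $\xi\in Q$, and write
\[
T(\1\ci{\R^N\setminus 3Q}f)(\xi)=\bigl[T(\1\ci{\R^N\setminus 3Q}f)(\xi)-T(\1\ci{\R^N\setminus 3Q}f)(x)\bigr]+T(\1\ci{\R^N\setminus 3Q}f)(x).
\]
The bracketed difference is controlled by the smoothness of $K$: for $y\notin 3Q$ one has $|\xi-x|\le C_N\ell(Q)\le C_N|x-y|$, hence $|K(\xi,y)-K(x,y)|\le\omega(|\xi-x|/|x-y|)|x-y|^{-N}$; decomposing $\R^N\setminus 3Q$ into dyadic annuli about $x$ of radii $\sim 2^k\ell(Q)$ and summing $\omega(2^{-k})$ via the Dini condition yields $\le C_N\|\omega\|\ti{Dini}Mf(x)$. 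The remaining term $T(\1\ci{\R^N\setminus 3Q}f)(x)$ differs from the standard truncation $\int_{|x-y|>\e}K(x,y)f(y)\,\dd y$ (with $\e\sim\ell(Q)$) by an integral of $|K(x,y)f(y)|$ over one annulus of comparable radii, which is $\le C_NMf(x)$ by the size estimate on $K$; the truncated integral itself is $\le T^\sharp f(x)$. Taking $\esssup$ over $\xi\in Q$ and $\sup$ over cubes $Q\ni x$ gives (2).

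The delicate step is the pointwise reduction in (1): weak $(1,1)$ only provides a distributional estimate for $T(\1\ci{3Q}f)$ on $3Q$, not a value at the specific point $x$. The remedy is the standard median/local-mean-oscillation trick: the median of $|T(\1\ci{3Q_n}f)|$ on $3Q_n$ is $\le C_N\|T\|\ci{L^1\to L^{1,\infty}}\La|f|\Ra\ci{3Q_n}\to C_N\|T\|\ci{L^1\to L^{1,\infty}}|f(x)|$ as $|Q_n|\to 0$, and combining this convergence with the Lebesgue-point property of $T(\1\ci{3Q_0}f)$ at $x$ and the $2\cM\ci Tf(x)$ tail bound yields the claimed pointwise inequality. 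All remaining steps are routine kernel calculations.
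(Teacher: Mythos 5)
The paper does not prove this lemma at all -- it is quoted verbatim from Lerner \cite[Lemma 3.2]{L_2016} -- so there is nothing to compare against except the standard argument, which is exactly what you reproduce: median/weak-type reasoning plus the tail identity for part (1), and the Dini annulus computation plus comparison with the $\e$-truncation for part (2). Your proof is correct in substance. One small point to tighten: for $f\in L^1$ the function $T(\1\ci{3Q_0}f)$ is only in $L^{1,\infty}$, hence not locally integrable in general, so ``Lebesgue point of $T(\1\ci{3Q_0}f)$'' is not the right notion; what you actually need (and what your median argument uses) is that a.e.\ $x$ is a point of \emph{approximate continuity} of this measurable, a.e.-finite function, so that the density-$\ge 1/2$ good set from the weak $(1,1)$ bound must meet the set where $T(\1\ci{3Q_0}f)$ is within $\delta$ of its value at $x$. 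Relatedly, take the median over $Q$ rather than $3Q$ (adjusting the threshold by $3^N$), so that the good set has density $\ge 1/2$ inside the cube containing $x$; and note that your factor $2\cM\ci T f(x)$ in the tail bound reduces to the stated $\cM\ci T f(x)$ because in the application $f$ is supported in $3Q_0$. These are constant-level adjustments, not gaps.
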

If $\omega $ satisfies the Dini condition, then the operators $T$ and $T^\sharp$ are of weak type $1$-$1$; the maximal operator $M$ also is of weak type $1$-$1$. Therefore the operator $\cM\ci T$  is also of weak type $1$-$1$.

So, there exist constants $C_{1,2}=C_{1,2}(T, N)$ such that for any  $\e>0$ the measure of the set $E_\e\subset Q_0$, 
\begin{align*}
E_\e:=\bigl\{ x\in Q_0 : \cM\ci T f(x) > C_1 \e^{-1} \La |f|\Ra\ci{3Q_0} \bigr\}\cup 
\bigl\{ x\in Q_0 : | f(x) | > C_2 \e^{-1} \La |f|\Ra\ci{3Q_0} \bigr\}
\end{align*}
satisfies the estimate $|E|\le 2^{-N-1} \e |Q_0|$. 

Statement \cond1 of Lemma \ref{l:Lern-3.2} implies that on $Q_0\setminus E_\e$
\begin{align}
\label{est-T-01}
|Tf(x) | \le C_3 \e^{-1} \La |f|\Ra\ci{3Q_0}
\end{align}
for an appropriate constant $C_3=C_3(T, N)$. 

Now define $\cG$ as the collection of maximal dyadic subcubes $Q$ of $Q_0$ such that 
\begin{align*}
\La \1\ci{E_\e}\Ra\ci{Q} > 2^{-N-1} 
\end{align*}
Since $E_\e\subset \bigcup_{Q\in\cG} Q$, the estimate \eqref{est-T-01} holds a.e.~on $Q_0\setminus \bigcup_{Q\in\cG} Q$.

Note that for $Q\in\cG$ we have $\La \1\ci{E_\e}\Ra\ci{Q} \le 2^{-1}$, because $\La \1\ci{E_\e}\Ra\ci{R} \le 2^{-1-N}$ on the parent $R$ of $Q$.  But that means $\cM\ci T f(x_0) \le C_1 \e^{-1}\La |f|\Ra\ci{3Q_0}$ for some $x_0\in Q$. 

Therefore $|T (f\1\ci{\R^N\setminus 3Q})(x) | \le C_1 \e^{-1}\La |f|\Ra\ci{3Q_0}$ a.e.~on $Q$, because otherwise the inequality $\cM\ci T f(x)> C_1 \e^{-1}\La |f|\Ra\ci{3Q_0}$ would hold everywhere on $Q$. 
Thus, statement \cond2 holds with $C=\max\{C_1, C_3\}$. 

To prove statement \cond3 we just notice that if $\overline\cG$ covers $\cG$ then still $E_\e\subset \bigcup_{Q\in\overline \cG} Q$, and that $\La \1\ci{E_\e}\Ra\ci{Q} \le 2^{-1}$ for any $Q\in \overline\cG$. So the same proof as for $\cG$ works for $\overline\cG$.  
\hfill \qed

\subsection{Domination of vector-valued  Haar shifts and paraproducts by sparse operators}
\label{s:HaarShift}

Recall, that a \emph{generalized big Haar shift} of complexity $r\ge0$ is a bounded in $L^2(\R^N)$ operator $\shi$
\begin{align}
\label{H-shift-01}
\shi = \sum_{Q\in\cD} T\ci Q, \qquad T\ci Q f(x) =\int K\ci Q(x,y) f(y) dy,  
\end{align}
where kernels $K\ci Q$ are supported on $Q\times Q$,  constant on all $R\times S$ with $R, S\in\ch^{r+1} Q$ and satisfy the estimate 
\begin{align}
\label{K_HS-01}
\| K \|_\infty \le |Q|^{-1} .
\end{align}
We say that $\shi$ is a big Haar shift, without the word \emph{generalized}, if, in addition $T\ci Q\1\ci Q =0$, $T^*\ci Q\1\ci Q=0$.

For a locally integrable function $b$ paraproduct $\Pi_b^r$ of order $r$ with symbol $b$ is defined by
\begin{align}
\label{Para-01}
\Pi_b^r f =\sum_{Q\in\cD} \La f \Ra\ci Q \sum_{R\in\ch^r Q} \Delta\ci R b. 
\end{align}
Note, that if $\|\Pi_b^r\|\le 2^{-Nr/2}$, then $\Pi_b^r$ is a generalized big Haar shift. 

\begin{df}
\label{df:r-sep}
A generalized big Haar shift is said to be $r$-separated if there exists $k=0, 1, 2, \ldots , r$ such that  $T\ci Q\ne 0$ only if $Q\in \cD^k=\cD^{k,r}:=\bigcup_{j\in\Z}\cD_{k+(r+1)j}$. 
\end{df}

Each generalized big Haar shift of complexity $r$ can be represented as a sum of $r+1$ $r$-separated ones, so it is sufficient to estimate only $r$-separated Haar shifts. 

Note also that if $T$ is an $r$-separated  generalized big Haar shift, then with respect to the lattice $\cD^k$ it will be a shift of complexity $1$.

\begin{thm}
\label{t:DomHS}
Let $T$ be either a big Haar shift of complexity $r$ or a paraproduct $\Pi_b^r$ of order $r$ (with $\|\Pi_b^r\|\le 2^{-Nr/2}$, so it is a generalized big Haar shift), and let $T$ be $r$-separated. Given $\delta \in (0,1)$ there exists $C=C(\e, N, d)$ such that for every compactly supported $f\in L^1(\R^N;\R^d)$ there exists an $\e$-sparse family $\cS\subset \cD$  (in the sense of Definition \ref{df:Sparse}) such that
\begin{align}
\label{SpDom-02}
Tf(x) \in C  \bL\ci{\cS} (x),  
\end{align}
where $C=C(N,d, \e)$.
\end{thm}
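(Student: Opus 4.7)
The plan is to reduce the vector statement to a scalar induction step via Lemma~\ref{lemma_scalar_to_vector}, and then iterate as in the proof of Theorem~\ref{t:SpDom-01}. As explained after Definition~\ref{df:r-sep}, any big Haar shift of complexity $r$ (resp.\ any paraproduct $\Pi_b^r$) splits into at most $r+1$ $r$-separated pieces, so it suffices to assume $T$ itself is $r$-separated, with $T\ci Q\ne 0$ only for $Q\in\cD^k:=\cD^{k,r}$. With respect to the atomic filtration $\cD^k$ the operator $T$ is then a shift of complexity one and, in particular, is dyadically localized: $T\ci Q(f\1\ci{\R^N\setminus Q})=0$ for every $Q\in\cD^k$. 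By Remark~\ref{r:ArbFiltr}, Lemma~\ref{lemma_scalar_to_vector} applies to this filtration; I would apply it with the geometric parameter (the $r$ of that lemma) equal to $1$, so that $Q'=Q$.

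The scalar induction step, i.e.~the hypotheses of Lemma~\ref{lemma_scalar_to_vector} for the scalar operator $T$, I would prove by the same Lerner-type argument as in Section~\ref{s:LernArg}, but adapted to the filtration $\cD^k$. Define
\[
\cM\ci T f(x):=\sup_{\substack{Q\in\cD^k\\ Q\ni x}}\,\esssup_{\xi\in Q}\bigl|T(\1\ci{\R^N\setminus Q}f)(\xi)\bigr|.
\]
For a scalar $f\in L^1$ supported in a cube $Q_0\in\cD^k$, dyadic localization gives the pointwise bound $|Tf|\le \cM\ci T f+|T(f\1\ci Q)|$ on each $Q\in\cD^k$, $Q\subset Q_0$. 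Both $T$ itself and its maximal truncation are of weak type $(1,1)$ (a standard fact for big Haar shifts of bounded complexity, and for paraproducts following from the normalization $\|\Pi_b^r\|\le 2^{-Nr/2}$ via the dyadic Calder\'on--Zygmund decomposition against $\BMOd$), so $\cM\ci T$ is also of weak type $(1,1)$. Thus for an appropriate $C=C(T,N,r)$ the set
\[
E_\e:=\bigl\{x\in Q_0:\cM\ci T f(x)+|f(x)|>C\e^{-1}\la|f|\ra\ci{Q_0}\bigr\}
\]
has $|E_\e|\le 2^{-N(r+1)-1}\e|Q_0|$. Taking $\cG$ to be the maximal cubes $Q\in\cD^k$, $Q\subset Q_0$, with $\la\1\ci{E_\e}\ra\ci Q>2^{-N(r+1)-1}$, the $\cD^k$-parent still lies in $\cD^k$ and has average below the threshold, so $|E_\e\cap Q|\le|Q|/2$, which in turn gives the two pointwise estimates needed for the middle and the last conditions of Lemma~\ref{lemma_scalar_to_vector}; the last condition is automatic, exactly as in Section~\ref{s:LernArg}, because any covering $\overline\cG$ still covers $E_\e$ and the $\cM\ci T$-bound propagates to its cubes.

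Once these hypotheses are in place, iterating Lemma~\ref{lemma_scalar_to_vector} as in the proof of Theorem~\ref{t:SpDom-01} produces a dyadic $\delta$-sparse family $\cS\subset\cD^k\subset\cD$ with $Tf(x)\in C\,\bL\ci\cS f(x)$ a.e., where $C=C(T,N,d,\delta)$; and no tail cubes of the form $3^n Q_0$ are needed because of the dyadic localization. Taking the union of the sparse families obtained from the $r+1$ $r$-separated pieces (and absorbing the factor $r+1$ into the constant and the sparsity parameter) yields the theorem. The main obstacle is the weak $(1,1)$ control of $\cM\ci T$ in the paraproduct case, where the symbol $b$ sits only in dyadic $\BMOd$ and one has to run the dyadic Calder\'on--Zygmund decomposition against the $L^2$ normalization $\|\Pi_b^r\|\le 2^{-Nr/2}$; everything else is bookkeeping around the framework already set up by Lemma~\ref{lemma_scalar_to_vector}.
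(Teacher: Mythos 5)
Your overall architecture matches the paper's: split $T$ into $r$-separated pieces, pass to the coarser lattice $\wt\cD=\cD^k$ on which $T$ has complexity one, verify the hypotheses of Lemma~\ref{lemma_scalar_to_vector} with $Q'=Q$, and iterate. But there are two genuine problems. First, the claim that ``no tail cubes are needed because of the dyadic localization'' is wrong. If $f$ is supported on $Q_0\in\wt\cD$, then $Tf$ does \emph{not} vanish outside $Q_0$: every $T\ci R$ with $R\supsetneq Q_0$ contributes a term supported on all of $R$ (already for a martingale transform, $\Delta\ci R f\ne 0$ on $R\setminus Q_0$ when $R\supsetneq Q_0$). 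The iteration of Lemma~\ref{lemma_scalar_to_vector} dominates $Tf$ only a.e.\ on $Q_0$, so one must still handle $\sum_{R\in\wt\cD:\,Q_0\subsetneqq R}T\ci R f$. The paper does this by noting $T\ci R f(x)\in\1\ci R(x)\lla f\rra\ci R$ with $\lla f\rra\ci R=\frac{|Q_0|}{|R|}\lla f\rra\ci{Q_0}$, and adjoining to the sparse family a lacunary sequence of ancestors $R_k\supsetneq Q_0$ with $|R_{k+1}|/|R_k|\le\e$. Without this step your family fails to dominate $Tf$ on $\R^N\setminus Q_0$.

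Second, your verification of the scalar induction step routes everything through a dyadic maximal truncation $\cM\ci T$, whose weak $(1,1)$ bound you assert (and yourself flag as ``the main obstacle'' in the paraproduct case) rather than prove; you also need, and do not establish, the pointwise analogue of Lerner's bound $|Tf|\lesssim |f|+\cM\ci T f$ off the exceptional set. The paper avoids $\cM\ci T$ entirely: the stopping cubes are defined directly as the maximal $R\in\wt\cD$ on which either the partial sum $\bigl|\sum_{Q:\,R\subsetneqq Q}T\ci Q f\bigr|$ exceeds $2C\e^{-1}\la|f|\ra\ci{Q_0}$ or $\la|f|\ra\ci R>2\e^{-1}\la|f|\ra\ci{Q_0}$. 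The measure of the first family is then controlled by the weak $(1,1)$ norm of the truncated shift $T^1$, which follows from \cite[Theorem 5.2]{HyPTV} because truncation does not increase the $L^2$ norm, while the second is handled by the maximal function; the pointwise estimates (ii) and (iii) come from the stopping conditions together with a geometric-series bound on $\sum_{Q\supsetneq \hat R}\1\ci R T\ci Q(\1\ci R f)$. If you wish to keep the $\cM\ci T$ route, you must actually supply its weak $(1,1)$ bound (including for paraproducts with $b$ only in dyadic BMO) and the pointwise domination; as written, the key analytic input of the scalar step is missing.
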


The theorem can be easily obtained from the lemma below. For the $r$-separated shift $T$ from Theorem \ref{t:DomHS} we denote   $\cD^k$  from Definition \ref{df:r-sep} by $\wt \cD:= \cD^k$, skipping the index $k$.

\begin{lm}
\label{l:SpDom-03-step-03}
Let $T$ be as in Theorem \ref{t:DomHS}. Given $\e\in (0,1)$ there exists $C=C( N)$ such that for any function $f\in L^1(\R^N)$ supported on $Q_0\in \wt\cD$ there exists a collection $\cG$ of disjoint cubes $Q\in\wt\cD$, $Q\subset Q_0$ such that 
\begin{enumerate}
\item $\displaystyle  \sum_{Q\in\cG} |Q| \le \e |Q_0|     $;
\item $\displaystyle \Bigl|Tf(x) - \sum_{Q\in\cG}  \1\ci{Q} T( f \1\ci{Q})\Bigr| \le C \e^{-1}\La |f| \Ra\ci{Q_0} $ a.e.~on $Q_0$;
\item For any disjoint collection $\overline\cG\subset \wt\cD(Q_0)$ that covers $\cG$
\[
\Bigl|Tf(x) - \sum_{Q\in\overline\cG}  \1\ci{Q} T( f \1\ci{Q})\Bigr| \le C \e^{-1}\La |f| \Ra\ci{Q_0}  \text{a.e.~on } Q_0 .
\]
\end{enumerate}
\end{lm}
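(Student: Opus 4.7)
The proof is a Lerner-style stopping-time argument carried out entirely on the sub-lattice $\wt\cD:=\cD^{k,r}$, on which the $r$-separated operator $T$ is either a Haar shift of complexity $1$ or a paraproduct whose Haar increments $\Delta_R b$ sit one ``sub-level'' below the averaging $\wt\cD$-cube. First, since $f=f\1_{Q_0}$ and every kernel $K_R$ is supported in $R\times R$ with $\|K_R\|_\infty\le|R|^{-1}$, the ``outer tail'' $T^{\mathrm{out}}f(x):=\sum_{R\in\wt\cD,\,R\supsetneq Q_0}T_R f(x)$ is dominated on $Q_0$ by the geometric sum $\sum_{R\supsetneq Q_0}|R|^{-1}\|f\|_1\le C_N\langle|f|\rangle_{Q_0}$, using that along $\wt\cD$ the volume increases by a factor $2^{N(r+1)}$ per generation; the paraproduct tail behaves analogously. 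It therefore suffices to analyze the ``inner'' part $\sum_{R\in\wt\cD(Q_0)}T_R f$.

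Let $\cG$ be the collection of $\wt\cD$-maximal subcubes $Q\subseteq Q_0$ with $\langle|f|\rangle_Q>A\varepsilon^{-1}\langle|f|\rangle_{Q_0}$, where $A=A(T,N)$ is to be fixed. Disjointness plus the $L^1$-bound give $\sum_{Q\in\cG}|Q|\le A^{-1}\varepsilon|Q_0|\le\varepsilon|Q_0|$, yielding (i). The crucial consequence of maximality is that \emph{any} cube $R\in\wt\cD(Q_0)$ strictly containing some $Q\in\cG$ satisfies $\langle|f|\rangle_R\le A\varepsilon^{-1}\langle|f|\rangle_{Q_0}$ (otherwise $R$ would itself be bad and $Q$ would not be maximal), and hence every element of any disjoint cover $\overline\cG$ of $\cG$ that strictly contains a $\cG$-cube inherits the same bound.

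To verify (ii) and (iii), fix a disjoint $\overline\cG\subseteq\wt\cD(Q_0)$ covering $\cG$ and, for $x\in Q_0$, write $Q'(x)$ for the (unique) cube of $\overline\cG$ containing $x$, or $\emptyset$ if no such cube exists. Using only the support property of the kernels one computes
\[
Tf(x)-\sum_{Q'\in\overline\cG}\1_{Q'}(x)T(f\1_{Q'})(x)=T^{\mathrm{out}}f(x)+\sum_{\substack{R\in\wt\cD(Q_0)\\ Q'(x)\subseteq R,\,x\in R}}T_R(f\1_{Q_0\setminus Q'(x)})(x),
\]
where the $R=Q'(x)$ term vanishes automatically. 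For the Haar shift, invoke the cancellation $T_R\1_R=0$ together with the constancy of $K_R(x,\cdot)$ on $\wt\cD$-children of $R$: each remaining summand can be rewritten as a bounded multiple of a difference $\langle f\rangle_{R^*(x)}-\langle f\rangle_{R''}$ of $\wt\cD$-child averages of $f$ on $R$, and summing over the chain $Q'(x)\subsetneq R\subseteq Q_0$ these differences telescope into a bounded remainder $\lesssim A\varepsilon^{-1}\langle|f|\rangle_{Q_0}$. The paraproduct case is analogous: the chain sum telescopes to a single $\langle f\rangle$-factor (controlled by the stopping time) times a partial Haar sum of $b$, controlled pointwise by $\|b\|_{\mathrm{BMO}}\lesssim\|\Pi_b^r\|\cdot 2^{Nr/2}\lesssim 1$.

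The main obstacle is precisely this telescoping step: the trivial bound $|T_R(f\1_{Q_0\setminus Q'(x)})(x)|\le\langle|f|\rangle_R$ does not sum over the chain of $R$'s (whose length is unbounded), so the cancellation $T_R\1_R=0$ (respectively the vanishing mean of Haar functions for paraproducts) is essential to collapse the chain to an $O(1)$-multiple of the stopping-time average. This is exactly the scalar Lerner mechanism transcribed from $\cD$ to $\wt\cD$, and property (iii) is automatic since the bound $\langle|f|\rangle_R\le A\varepsilon^{-1}\langle|f|\rangle_{Q_0}$ survives passage from $\cG$ to any cover $\overline\cG$.
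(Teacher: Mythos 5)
Your stopping time uses only the single condition $\langle|f|\rangle_Q > A\e^{-1}\langle|f|\rangle_{Q_0}$, and the whole weight of the argument is then placed on the claim that the chain sum $\sum_{R:\,Q'(x)\subsetneq R\subseteq Q_0}T_R(f\1\ci{Q_0\setminus Q'(x)})(x)$ ``telescopes'' to an $O(1)$-multiple of the stopping average thanks to the cancellation $T_R\1_R=0$. That claim is false, and it fails already for the simplest big Haar shift, the martingale transform $Tf=\sum_I\epsilon_I\Delta_I f$ with $\epsilon_I=\pm1$: the tail $\sum_{I\supsetneq Q'}\epsilon_I\Delta_I f(x)$ is a truncation of the martingale transform, and for general signs it admits no pointwise bound by $C\sup_{I\supseteq Q'}\langle|f|\rangle_I$ (the telescoping identity $\sum_{I\supsetneq Q'}\Delta_I f=\langle f\rangle_{Q'}-\langle f\rangle_{Q_0}$ on $Q'$ holds only when all $\epsilon_I=1$; with signs there is no cancellation across the chain). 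The same defect appears in your paraproduct step: a partial Haar sum of $b$ along a chain is an oscillation of the type $\langle b\rangle_{Q'}-\langle b\rangle_{Q_0}$, which for a genuine $\mathrm{BMO}$ symbol can grow like $\|b\|_{\mathrm{BMO}}\log\bigl(\ell(Q_0)/\ell(Q')\bigr)$ and is certainly not pointwise bounded by $\|b\|_{\mathrm{BMO}}$.

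The missing idea is that the set where these operator tails are large must itself be placed into the stopping family, with its measure controlled by the weak type $(1,1)$ estimate for the truncated shift. This is exactly what the paper does: $\cG$ consists of the maximal cubes $R\in\wt\cD(Q_0)$ where either $\La|f|\Ra\ci R>2\e^{-1}\La|f|\Ra\ci{Q_0}$ or $\bigl|\sum_{Q\supsetneq R}T\ci Q f\bigr|>2C\e^{-1}\La|f|\Ra\ci{Q_0}$ on $R$; the second family has measure at most $(\e/2)|Q_0|$ by the bound $\|T\|\ci{L^1\to L^{1,\infty}}\le C(N)$ from \cite{HyPTV}, and then on the complement of $\bigcup\cG$ and on each stopping cube the remainder is estimated much as you attempt --- except that the bound on the tail $\sum_{Q\supsetneq \hat R}T\ci Q f$ is now supplied by the stopping condition at the parent rather than by a (nonexistent) telescoping, and only the single term $T\ci{\hat R}f$ plus the geometric sum $\sum_{Q\supseteq\hat R}|Q|^{-1}\int_R|f|$ need the average stopping condition. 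Your treatment of part (i), of the outer tail over $R\supsetneq Q_0$, and your observation that (iii) follows because the stopping bounds survive passage to a cover are all fine, but the argument cannot be closed without the second, operator-dependent stopping condition and the weak $(1,1)$ input.
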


\subsubsection{Proof of Theorem \ref{t:DomHS} }
The above Lemma \ref{l:SpDom-03-step-03} says that the hypotheses of Lemma \ref{lemma_scalar_to_vector} are satisfied for all cubes $Q\in\wt\cD$. Applying Lemma \ref{l:SpDom-03-step-03} with $r=1$ and with  $\wt \cD$ instead of $\cD$, see Remark \ref{r:ArbFiltr}, we get the following lemma.

\begin{lm}
\label{l:SpDom-03-step-01}
Let $T$ be as in Theorem \ref{t:DomHS}. Given $\e\in (0,1)$ there exists $C=C(\e, N)$ such that for any function $f\in L^1(\R^N; \R^d)$ supported on $Q_0\in \wt\cD$ there exists a collection $\cG_1$ of disjoint cubes $Q\in\wt\cD$, $Q\subset Q_0$ such that 
\begin{enumerate}
\item $\displaystyle  \sum_{Q\in\cG_1} |Q| \le \e |Q_0|     $;
\item $\displaystyle  T f (x)  \in C \lla f\rra\ci{Q_0} + \sum_{Q \in\cG_1} \1\ci{Q}T( f \1\ci{Q})$ a.e.~on $Q_0$. 
\end{enumerate}
\end{lm}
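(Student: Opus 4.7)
The plan is to combine the scalar stopping-time construction of Lemma~\ref{l:SpDom-03-step-03} with the scalar-to-vector bridge of Lemma~\ref{lemma_scalar_to_vector}, reading the latter with $r=1$ (so $Q'=Q$) and with the underlying atomic filtration taken to be $\wt\cD$ in place of $\cD$. In other words, the present lemma is the direct vector-valued companion of Lemma~\ref{l:SpDom-03-step-03}, obtained by feeding the scalar stopping data into the John-ellipsoid machinery already developed.

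First I would observe that Lemma~\ref{l:SpDom-03-step-03} verifies, essentially verbatim, hypotheses (i)--(iii) of Lemma~\ref{lemma_scalar_to_vector} for the scalar operator $T$ with $r=1$: for every scalar $f\in L^1(\R^N)$ supported on some $Q_0\in\wt\cD$ and every $\e>0$, the family $\cG\subset\wt\cD(Q_0)$ it supplies obeys the measure bound $\sum_{Q\in\cG}|Q|\le \e|Q_0|$, the localized difference estimate with constant $C\e^{-1}\La|f|\Ra\ci{Q_0}$, and the covering-stability clause (iii). This is the entire scalar input needed downstream.

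Second, by Remark~\ref{r:ArbFiltr} the proof of Lemma~\ref{lemma_scalar_to_vector} goes through unchanged for any atomic filtration, and $\wt\cD=\cD^{k,r}$ is such a filtration on $\R^N$. Applying that lemma with $\delta=\e$ to a vector-valued $f\in L^1(\R^N;\R^d)$ supported on $Q_0\in\wt\cD$, one decomposes $f$ along the principal axes $e_1,\dots,e_d$ of the John ellipsoid of $\lla f\rra\ci{Q_0}$, feeds each scalar coordinate $f_k=(f,e_k)\ci{\R^d}$ into the scalar hypothesis with level parameter of order $\e/d$, and takes $\cG_1$ to be the collection of maximal cubes from $\bigcup_{k=1}^d \cG_k$. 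The output
\[
Tf(x)\in C\lla f\rra\ci{Q_0}+\sum_{Q\in\cG_1}\1\ci Q\, T(f\1\ci Q) \qquad\text{a.e.\ on }Q_0,
\]
together with $\sum_{Q\in\cG_1}|Q|\le\e|Q_0|$ and $C=C(T,N,d,\e)$, is then exactly what Lemma~\ref{lemma_scalar_to_vector} delivers.

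There is no genuine obstacle, since all the creative work (the John ellipsoid decomposition and the Lerner-type scalar stopping construction for Haar shifts and paraproducts) has already been carried out upstream. The one item worth checking explicitly is that the output family $\cG_1$ lives inside $\wt\cD(Q_0)$ rather than in a finer lattice; this is immediate from the construction of Lemma~\ref{lemma_scalar_to_vector}, because each scalar family $\cG_k$ is produced by Lemma~\ref{l:SpDom-03-step-03} inside $\wt\cD(Q_0)$, so the maximal elements of their union remain in $\wt\cD(Q_0)$. Dyadic localization of $T$ (for Haar shifts and paraproducts $T\ci Q$ lives on $Q\times Q$) ensures that no expansion factor $r>1$ is needed, which is why we may take $r=1$ in Lemma~\ref{lemma_scalar_to_vector}.
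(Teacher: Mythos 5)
Your proposal is correct and follows exactly the route the paper takes: Lemma \ref{l:SpDom-03-step-03} supplies the scalar hypotheses of Lemma \ref{lemma_scalar_to_vector} with $r=1$ for the filtration $\wt\cD$ (as permitted by Remark \ref{r:ArbFiltr}), and that lemma then yields the stated vector-valued conclusion. Nothing further is needed.
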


To prove Theorem \ref{t:DomHS}  we iterate Lemma \ref{l:SpDom-03-step-01} to get that for a function $f\in L^1(\R^N;\R^d)$ supported on $Q_0\in\wt\cD$
\begin{align*}
T f(x) \in C\sum_{Q\in\cS^0} \lla f\rra\ci Q \1\ci Q, \qquad \text{a.e.~on } Q_0, 
\end{align*}
for a sparse family $\cS^0\subset \wt\cD(Q_0)$. To estimate $Tf$ outside of $Q_0$, we need to estimate $\sum_{R\in\wt\cD: Q_0\subsetneqq R} T\ci R$. But  $T\ci R f(x) \in \1\ci R(x)  \lla f \rra\ci R$ a.e., and for each $R\in\wt\cD$, $Q_0\subsetneqq R$ we have 
\[
\lla f \rra\ci R = \frac{|Q_0|}{|R|} \lla f \rra\ci{Q_0}. 
\]
So adding to $\cS^0$ the cubes $R_k\in\wt\cD$, $k\ge 1$, where $Q_0\subsetneqq R_k$ and for all $k\ge 1$
\[
|R_{k+1} |/|R_k| = |R_1|/|Q_0|  \le\e, 
\]
we get the conclusion of the theorem for the operator $T$. \hfill\qed

\subsubsection{Proof of Lemma \ref{l:SpDom-03-step-03} }
First recall that an $r$-separated generalized big Haar shift $T$ of complexity $r$ with $\|T\|\le 1$  has weak type $1$-$1$, and that
\begin{align}
\label{weak-02}
\|T\|\ci{L^1\to L^{1, \infty}} \le C = C(N), 
\end{align}
see \cite[Theorem 5.2]{HyPTV}. 

Define $\cG$ to be the collection of maximal cubes $R\in\wt\cD$ such that either of two conditions below holds
\begin{align}
\label{stop-03}
\Bigl| \sum_{Q\in\wt\cD \,:\, R\subsetneqq Q} T\ci Q(f) \Bigr| & > 2{C}\e^{-1} \La |f|\Ra\ci{Q_0} \qquad \text{on } R, \qquad 
\\
\label{stop-04} 
\La |f|\Ra\ci{R} & > 2 \e^{-1}\La |f|\Ra\ci{Q_0} , 
\end{align}
where $C=C(N)$ is from \eqref{weak-02}.

We claim that $\sum_{Q\in\cG} |Q| \le \e |Q_0|$. Let $\cG_1\subset G$ be the collection of stopping cubes where \eqref{stop-03} holds, and let $\cG_2=\cG\setminus\cG_1$.

Consider the operator $T^1$, 
\[
T^1 = \sum_{Q\in\wt\cD\setminus\bigcup_{R\in\cG} \wt\cD(R)} T\ci Q .
\]
By \eqref{stop-03} on any cube $R\in\cG_1$ we have $|T^1 f(x)| > C\e^{-1}$, 
so the weak type estimates for $T^1$ and disjointness of $R\in\cG$ imply 
\[
\sum_{R\in\cG_1} |R|  = \Bigl| \bigcup_{R\in\cG_1} R \Bigr| \le (\e/2)|Q_0| . 
\]

 \begin{rem*}
Here we used the estimate \eqref{weak-02} of the weak $1$-$1$ norm, that depends only on $\|T\|_2$ and $N$. We can use it, since for the operators from Theorem \ref{t:DomHS} the truncation does not increase the norm. 
 \end{rem*}   

Since for any $R\in \cG_2$ we have $\La |f|\Ra\ci{R}  > 2 \e^{-1}\La |f|\Ra\ci{Q_0}$, the trivial weak type estimates imply that 
\[
\sum_{R\in\cG_2} |R|  = \Bigl| \bigcup_{R\in\cG_2} R \Bigr| \le (\e/2)|Q_0|, 
\]
and statement \cond1 is proved.

Let us now prove statement \cond2. 
It follows from the construction, see stopping condition \eqref{stop-03} that 
\begin{align}
\label{ii-01}
|T f(x)| 
\le C\e^{-1}  \La |f| \Ra\ci{Q_0}, \qquad \text{for a.e. }  x\in Q_0\setminus \bigcup_{R\in\cG} R .
\end{align}
Let $R\in \cG$ and let $\hat R$ be its $\wt\cD$-parent. 
Again, it follows from the construction, that on $R$
\begin{align}
\label{ii-02}
\Bigl| \sum_{Q\in\wt\cD \,:\, \hat R\subsetneqq Q} T\ci Q(f) \Bigr| \le  C \e^{-1}\La |f| \Ra\ci{Q_0} .
\end{align}
For the shift $T=\sum_Q T\ci Q$ and a cube $R\in\wt\cD$ define 
\[
T^{R}:= \sum_{Q\in\wt\cD(R)} T\ci Q .
\]     
For $R\in \wt\cD$ we write
\[
\1\ci R Tf -\1\ci R T(\1\ci R f) =  \left( \1\ci R Tf - T^R f\right)  + \left( T^R f - \1\ci R T(\1\ci R f) \right)
\]
and estimate each term separately. 

To estimate the first term notice that for $x\in R\in \cG$
\[
T f(x) - T^R f(x) = \sum_{Q\in\wt\cD \,:\, \hat R\subsetneqq Q} T\ci Q f (x) \quad  + \quad T\ci{\hat R} f(x). 
\]
The sum is estimated in \eqref{ii-02}. To estimate $T\ci{\hat R} f(x)$, recall that by the construction $\La |f|\Ra\ci{\hat R} \le 2 \e^{-1}  \La |f|\Ra\ci{Q_0}$, so recalling that $\|K\ci{\hat R}\|_\infty \le |\hat R |^{-1}$, we get that 
\[
| T\ci{\hat R} f(x) | \le \La |f|\Ra\ci{\hat R} \le 2 \e^{-1}  \La |f|\Ra\ci{Q_0},  
\]
so we get the desired estimate of $\1\ci R Tf - T^ R f$. 

To estimate $T^R f - \1\ci R T(\1\ci R f) $ we write
\[
 \1\ci R T(\1^R f) - T^R f  =  \sum_{Q\in\wt\cD \,:\, \hat R\subsetneqq Q} \1\ci R T\ci Q (1\ci R f) .
\]
Since for $x\in R$, and $Q\supsetneqq \hat R$  we have 
\[
|T\ci Q (\1\ci Q f)(x)| \le |Q|^{-1} |\hat R | \cdot \La |\1\ci R f |  \Ra\ci{\hat R}
\le  |Q|^{-1} |\hat R | \cdot \La | f |\Ra\ci{\hat R}. 
\]
It follows from the stopping condition \eqref{stop-04} that $\La | f |\Ra\ci{\hat R} \le 2 \e^{-1}\La |f|\Ra\ci{Q_0}$, so summing the geometric progression we get that $| T^R f - \1\ci R T(\1\ci R f) | \le C \e^{-1} \La |f|\Ra\ci{Q_0} \1\ci R$. Thus 
\[
| \1\ci R Tf -\1\ci R T(\1\ci R f) | \le C \e^{-1} \La |f|\Ra\ci{Q_0} \1\ci R, 
\]
which together with \eqref{ii-01}  gives statement \cond2  of the lemma. 

To prove \cond3, we notice that \eqref{ii-01} and \eqref{ii-02} hold and the above construction works if we replace $\cG$ by any collection $\overline\cG\subset \wt\cD$ of disjoint cubes that covers $\cG$. \hfill\qed

%
%
%
%
%
%
%
%
%

\section{Some known facts about \texorpdfstring{$A_2$}{A<sub>2} and \texorpdfstring{$A_\infty$}{A<sub>infty} weights.}
\label{prelim}

We will need  two well-known facts on scalar weights and one fact on matrix weights.  Scalar weights will be denoted by $w$, matrix weights by $W$.

\subsection{Comparison of \texorpdfstring{$A_2$}{A<sub>2} and \texorpdfstring{$A_\infty$}{A<sub>infty} weights and reverse H\"{o}lder inequality for \texorpdfstring{$A_\infty$}{A<sub>infty} weights.}

The first fact is very simple:

\begin{lm}
\label{a2}
If $w\in A_2^\cD$, then for any $Q\in\cD$ we have
\begin{equation}
\label{triv}
\int_Q M\ci Q  w \, \dd x \le 4 [ w]\ci{A_2^\cD} \int_Q w \, \dd x\,.
\end{equation}
\end{lm}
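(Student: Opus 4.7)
The plan is to run a Calder\'on--Zygmund stopping-time argument on $w$ and then exploit the $A_2^\cD$ condition together with Cauchy--Schwarz. First I would build a stopping family $\cF\subset\cD(Q)$ by putting $Q\in\cF$ and, inductively, declaring the $\cF$-children of $R\in\cF$ to be the maximal dyadic subcubes $R'\subsetneq R$ with $\langle w\rangle\ci{R'}>2\langle w\rangle\ci R$. Each child satisfies $\int_{R'}w>2\langle w\rangle\ci R|R'|$, so summing and comparing with $\int_R w=\langle w\rangle\ci R|R|$ gives $\sum_{R'\in\ch_\cF R}|R'|\le\frac12|R|$. Hence $E_R:=R\setminus\bigcup_{R'\in\ch_\cF R}R'$ satisfies $|E_R|\ge\frac12|R|$, and $\{E_R\}_{R\in\cF}$ is a measurable partition of $Q$. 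By the stopping property, for a.e.\ $x\in Q$ and every $S\in\cD(Q)$ containing $x$ one has $\langle w\rangle\ci S\le 2\langle w\rangle\ci{R(x)}$, where $R(x)$ is the smallest $\cF$-cube containing $x$; taking the sup, $M\ci Q w(x)\le 2\langle w\rangle\ci{R(x)}$ a.e., so
\[
\int_Q M\ci Q w\,\dd x\le 2\sum_{R\in\cF}\langle w\rangle\ci R|E_R|.
\]

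Next I would estimate each summand via the $A_2^\cD$ condition, which in the form $\langle w\rangle\ci R\langle w^{-1}\rangle\ci R\le[w]\ci{A_2^\cD}$ rewrites as $\langle w\rangle\ci R\le[w]\ci{A_2^\cD}|R|/\int_R w^{-1}$. Combining this with the sparseness bound $|R|\le 2|E_R|$ in the numerator and the trivial monotonicity $\int_R w^{-1}\ge\int_{E_R}w^{-1}$ in the denominator yields
\[
\langle w\rangle\ci R|E_R|\le\frac{[w]\ci{A_2^\cD}|R||E_R|}{\int_R w^{-1}}\le\frac{2[w]\ci{A_2^\cD}|E_R|^2}{\int_{E_R}w^{-1}}.
\]
Now Cauchy--Schwarz applied to the factorization $1=w^{1/2}w^{-1/2}$ on $E_R$ gives $|E_R|^2\le\int_{E_R}w\cdot\int_{E_R}w^{-1}$, whence $|E_R|^2/\int_{E_R}w^{-1}\le\int_{E_R}w$ and therefore $\langle w\rangle\ci R|E_R|\le 2[w]\ci{A_2^\cD}\int_{E_R}w$.

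Summing over $R\in\cF$ and using that the $E_R$ are pairwise disjoint subsets of $Q$,
\[
\sum_{R\in\cF}\langle w\rangle\ci R|E_R|\le 2[w]\ci{A_2^\cD}\sum_{R\in\cF}\int_{E_R}w=2[w]\ci{A_2^\cD}\int_Q w,
\]
which together with the first inequality gives the claimed constant $4$. The main obstacle I expect is the coupling between the sparseness bound on $|R|$ and the monotonicity of the $w^{-1}$-integral: it is precisely this that allows one to replace the cube $R$ by its ``ground set'' $E_R$ inside the $A_2$ estimate, which is what makes the disjointness of $\{E_R\}$ usable in the final step. Everything else is a routine stopping-time/Cauchy--Schwarz manipulation.
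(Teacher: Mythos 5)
Your argument is correct, including the constant: the stopping-time construction, the pointwise bound $M\ci Q w\le 2\la w\ra\ci{R(x)}$ a.e., and the chain $\la w\ra\ci R|E\ci R|\le 2[w]\ci{A_2^\cD}|E\ci R|^2/\int_{E\ci R}w^{-1}\le 2[w]\ci{A_2^\cD}\int_{E\ci R}w$ all check out, and the disjointness of the sets $E\ci R$ closes the estimate with the factor $2\cdot 2=4$. However, this is a genuinely different route from the paper's. The paper avoids any stopping time: it writes $\la w\ra\ci R\la w^{-1}\ra\ci R$ as a product of $\la w\ra\ci R\exp(-\la\ln w\ra\ci R)$ and $\la w^{-1}\ra\ci R\exp(\la\ln w\ra\ci R)$, notes by Jensen that each factor is at least $1$, hence $\la w\ra\ci R\le[w]\ci{A_2^\cD}\exp(\la\ln w\ra\ci R)\le[w]\ci{A_2^\cD}\la w^{1/2}\ra\ci R^2$, and then concludes $M\ci Q w\le[w]\ci{A_2^\cD}(M\ci Q w^{1/2})^2$ pointwise, so the result follows from the sharp $L^2$ bound $\|M\ci Q\|\ci{L^2\to L^2}\le 2$ applied to $w^{1/2}$. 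The paper's proof is shorter and isolates a reusable pointwise fact (essentially $A_2\Rightarrow$ the exponential form of $A_\infty$), outsourcing the summation to the maximal function theorem; your proof is self-contained and more elementary in that it replaces the $L^2$ maximal inequality by Cauchy--Schwarz on the disjoint ground sets $E\ci R$, at the cost of setting up the Calder\'on--Zygmund stopping family and verifying the a.e.\ pointwise domination of $M\ci Q w$ by the stopped averages. Both yield exactly the constant $4$.
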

\begin{proof} For any $R\in\cD(Q)$
\[
[ w]\ci{A_2^\cD} \ge \La w \Ra\ci R \La w^{-1} \Ra\ci R = \left( \La w \Ra\ci R \exp( - \La \ln w \Ra\ci R )\right)
\left( \La w^{-1} \Ra\ci R \exp(  \La \ln w \Ra\ci R ) \right). 
\]
By Jensen inequality both factors  in the right hand side are at least $1$, 
therefore 
\[
\La w \Ra\ci R \exp( - \La \ln w \Ra\ci R ) \le [ w]\ci{A_2^\cD},  
\]
and thus
\[
\La w \Ra\ci R \le [ w]\ci{A_2^\cD} \exp(\La \ln w \Ra\ci R ) \le [ w]\ci{A_2^\cD}  \La  w^{1/2} \Ra\ci R^2; 
\]
in the last inequality we have used the Jensen inequality again. 

Then $M\ci Q w \le [ w]\ci{A_2^\cD} (M\ci Q w^{1/2})^2$, and using the $L^2$ estimate for the maximal function we get 
\[
\int_Q M\ci Q  w \, \dd x \le [ w]\ci{A_2^\cD} \int_Q (M\ci Q  w^{1/2} )^2\, \dd x
\le 4 [ w]\ci{A_2^\cD} \int_Q w \, \dd x\,.
\]
\end{proof}

%
%
%
%

The above Lemma \ref{a2} immediately implies that 
\begin{equation}
\label{trivA}
[w]\ci{A^\cD_\infty} \le 4 [w]\ci{A_2^\cD}\, , \qquad  [w]\ci{A_\infty} \le 4 [w]\ci{A_2}.
\end{equation}

The next fact is more subtle, it is proved in \cite{Va} by the Bellman function method and in \cite{HyPeR} by a stopping time argument.

\begin{thm}
\label{RH}
Let $w\in A_\infty^\cD$ and let $0<\delta\le  \frac{2^{-N-1}}{ [w]_{A^\cD_\infty}}$. Then for any $Q\in\cD$ 
\begin{equation}
\label{RHineq}
\la w^{1+\delta}\ra\ci Q \le 2 \la w\ra\ci Q^{1+\delta}\,.
\end{equation}
\end{thm}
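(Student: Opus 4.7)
The plan is to prove Theorem \ref{RH} by a Calder\'on--Zygmund stopping-time argument, in the style of \cite{HyPeR}. Fix $Q\in\cD$ and abbreviate $a:=\la w\ra\ci Q$ and $C:=[w]\ci{A_\infty^\cD}$. I would first construct the family $\mathcal P\subset\cD(Q)$ of \emph{principal cubes} with stopping factor $2$: declare $Q\in\mathcal P$, and inductively let the children $\ch_{\mathcal P}(P)$ of each $P\in\mathcal P$ be the maximal dyadic subcubes $R\subsetneq P$ with $\la w\ra\ci R>2\la w\ra\ci P$. Setting $E_P:=P\setminus\bigcup_{R\in\ch_{\mathcal P}(P)}R$, the standard stopping-time reasoning yields three basic properties: $|E_P|\ge |P|/2$ (each child satisfies $\la w\ra\ci R>2\la w\ra\ci P$, forcing $\sum_R|R|<|P|/2$); $w(x)\le 2\la w\ra\ci P$ for a.e.\ $x\in E_P$ by the Lebesgue differentiation theorem applied outside all strict descendants of $P$; and the parent bound $\la w\ra\ci R\le 2^{N+1}\la w\ra\ci P$ for each child $R$ of $P$.

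Using the disjoint decomposition $Q=\bigsqcup_{P\in\mathcal P}E_P$ and the pointwise bound on $w$ in each $E_P$, I would split
\[
\int_Q w^{1+\delta}\,\dd x\le 2^\delta\sum_{P\in\mathcal P}\la w\ra\ci P^{\,\delta}\int_{E_P}w\,\dd x.
\]
The $A_\infty$ hypothesis then enters through Lemma \ref{a2} applied inside each $P\in\mathcal P$, which gives the Carleson packing
\[
\sum_{R\in\ch_{\mathcal P}(P)}|R|\la w\ra\ci R\le\int_P M\ci P w\,\dd x\le C|P|\la w\ra\ci P.
\]
Iterating this packing down through the principal-cube tree, combined with the parent bound $\la w\ra\ci R\le 2^{N+1}\la w\ra\ci P$, produces a geometric series over generations whose ratio is controlled by $(2^{N+1})^\delta$ and the packing constant $C$. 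A direct calculation shows that the threshold $\delta\le 2^{-N-1}/C$ is exactly what makes this series close with total constant at most $2$, yielding $\la w^{1+\delta}\ra\ci Q\le 2\la w\ra\ci Q^{1+\delta}$.

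The main obstacle is this last bootstrapping step. The pointwise bound $\la w\ra\ci R\le 2^{N+1}\la w\ra\ci P$ makes $\la w\ra\ci P^{1+\delta}$ inflate by a factor $(2^{N+1})^\delta>1$ per generation, and this growth has to be overcome by the Carleson packing, whose constant $C\ge 1$ does not on its own produce a convergent geometric series. To recover the sharp threshold $\delta\le 2^{-N-1}/[w]\ci{A_\infty^\cD}$ one has to use the packing more efficiently than the naive pointwise estimate permits: concretely, I would rewrite the crucial sum as
\[
\sum_{R\in\ch_{\mathcal P}(P)}|R|\la w\ra\ci R^{1+\delta}=\int_{\bigcup_R R}\la w\ra\ci{R(x)}^{\delta}\,w(x)\,\dd x\le\int_{\bigcup_R R}(M\ci P w)^\delta w\,\dd x,
\]
and use Lemma \ref{a2} to absorb the factor $(M\ci P w)^\delta$ against the tight average bound $\sum_R|R|\la w\ra\ci R\le|P|\la w\ra\ci P$. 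This delicate trade is the technical heart of the argument and is the step in which the precise numerical threshold $2^{-N-1}/C$ is forced; it is also where the stopping-time proof of \cite{HyPeR} and the Bellman-function proof of \cite{Va} diverge in execution.
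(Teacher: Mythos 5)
First, a point of reference: the paper does not prove Theorem \ref{RH} at all --- it explicitly defers to \cite[Theorem 2.3]{HyPeR} (and to \cite{Va} for a Bellman-function proof). So your sketch has to be judged against the argument of \cite{HyPeR}. Your stopping-time setup is fine as far as it goes: with stopping factor $2$ the properties $|E_P|\ge|P|/2$, $w\le 2\la w\ra\ci P$ a.e.\ on $E_P$, and $\la w\ra\ci R\le 2^{N+1}\la w\ra\ci P$ for $\mathcal P$-children $R$ of $P$ are all correct. One small correction: the packing inequality $\sum_{R\in\ch_{\mathcal P}(P)}|R|\la w\ra\ci R\le\int_P M\ci P w\,\dd x\le [w]\ci{A_\infty^\cD}\,w(P)$ is exactly the definition \eqref{A_infty} of the $A_\infty^\cD$ characteristic; Lemma \ref{a2} concerns $A_2$ weights and is neither available under the hypotheses of Theorem \ref{RH} nor needed.

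The genuine gap is the closing step, which you diagnose correctly but do not repair. Summing generation by generation, the packing inequality only gives $\sum_{P\in\mathcal P_{k+1}}w(P)\le [w]\ci{A_\infty^\cD}\sum_{P\in\mathcal P_k}w(P)$, so the $w$-mass of generation $k$ may grow like $[w]\ci{A_\infty^\cD}^{\,k}$, while the weight $\la w\ra\ci P^{\,\delta}\le (2^{(N+1)\delta})^k\la w\ra\ci Q^\delta$ attached to it also exceeds $1$; no choice of $\delta>0$ turns this into a convergent geometric series, and the sentence about ``absorbing the factor $(M\ci P w)^\delta$ against the tight average bound'' restates the difficulty rather than resolving it. The proof in \cite{HyPeR} avoids the generational series entirely. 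One bounds $\la w^{1+\delta}\ra\ci Q\le\la (M\ci Q w)^\delta w\ra\ci Q$ and, with $\Omega_t:=\{x\in Q:\,M\ci Q w(x)>t\}$ and $G:=\int_{\la w\ra_Q}^\infty t^{\delta-1}w(\Omega_t)\,\dd t$, derives the single self-improving inequality
\[
G\;\le\;2^N\Bigl[\la w\ra\ci Q^{\,\delta}\bigl([w]\ci{A_\infty^\cD}-1\bigr)\,w(Q)\;+\;\delta\,[w]\ci{A_\infty^\cD}\,G\Bigr],
\]
which follows from $w(\Omega_t)\le 2^N t|\Omega_t|$ (maximality of the level-set cubes), an integration by parts in $t$, and the bound $\int_Q(M\ci Q w-t)_+\,\dd x\le [w]\ci{A_\infty^\cD}\,w(\Omega_t)$ obtained by applying the $A_\infty$ condition on each maximal cube of $\Omega_t$. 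The hypothesis $\delta\le 2^{-N-1}/[w]\ci{A_\infty^\cD}$ makes the coefficient $2^N\delta[w]\ci{A_\infty^\cD}$ of the last term at most $1/2$; absorbing it gives $G\le 2^{N+1}([w]\ci{A_\infty^\cD}-1)\la w\ra\ci Q^\delta w(Q)$ and then $\la(M\ci Qw)^\delta w\ra\ci Q\le(1+\delta\,2^{N+1}([w]\ci{A_\infty^\cD}-1))\la w\ra\ci Q^{1+\delta}\le 2\la w\ra\ci Q^{1+\delta}$, which is \eqref{RHineq}. So the numerical threshold is forced by a one-shot absorption, not by summing over generations; to salvage your scheme you would have to reorganize the principal cubes into such an absorption (running the stopping time at all thresholds simultaneously), and that is precisely the step your proposal leaves open.
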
 

This theorem was proved in \cite[Theorem 2.3]{HyPeR}. It was assumed there that $w\in A_\infty$, but only the fact that $w\in A_\infty^\cD$ was used in the proof.

\subsection{Some properties of matrix weights}
\label{ss:MatrW}

\begin{lm}
\label{l:vec-sc-A_2}
Let $W\in \bA_2$. For $e\in \R^d$ define $w_e(x) := (W(x) e, e)\ci{\R^d}$. Then $w_e\in A_2$ and 
\[
[w_e]\ci{A_2} \le [W]\ci{\bA_2}. 
\]
The same lemma holds if we replace $A_2$ and $\bA_2$ by $A_2^\cD$ and $\bA_2^\cD$. 
\end{lm}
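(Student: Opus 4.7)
The plan is to fix a cube $Q$ (a cube of $\cD$ in the dyadic version), bound $\la w_e \ra\ci Q \la w_e^{-1} \ra\ci Q$ from above by $\nm \la W \ra\ci Q^{1/2} \la W^{-1} \ra\ci Q^{1/2} \nm^{2}$, and then take the supremum over $Q$. The first factor is immediate from linearity of the integral: $\la w_e \ra\ci Q = (\la W \ra\ci Q e, e)$. All the difficulty is in estimating $\la w_e^{-1} \ra\ci Q$.

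The idea for the second factor is a pointwise Cauchy--Schwarz inequality with an auxiliary vector: for any $f \in \R^d$ satisfying $(e, f) = 1$,
\[
1 = |(W(x)^{1/2} e, W(x)^{-1/2} f)|^2 \le (W(x) e, e) \, (W(x)^{-1} f, f),
\]
so $1/(W(x) e, e) \le (W(x)^{-1} f, f)$. Averaging over $Q$ with a \emph{constant} $f$ yields $\la w_e^{-1} \ra\ci Q \le (\la W^{-1} \ra\ci Q f, f)$. A Lagrange multiplier computation (minimizing the positive quadratic form $(Rf, f)$, where $R := \la W^{-1} \ra\ci Q$, subject to $(e, f) = 1$) then gives the sharp bound $\la w_e^{-1} \ra\ci Q \le 1/(R^{-1} e, e)$. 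The obvious choice $f = e/|e|^2$, equivalent to the pointwise inequality $1/(We, e) \le (W^{-1} e, e)/|e|^4$, is too lossy: a simple $2 \times 2$ diagonal example shows that the resulting upper bound $(\la W \ra\ci Q e, e)(\la W^{-1} \ra\ci Q e, e)/|e|^4$ can already exceed $[W]\ci{\bA_2}$, so optimizing over $f$ is essential.

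Writing $P := \la W \ra\ci Q$ and combining the two estimates gives
\[
\la w_e \ra\ci Q \la w_e^{-1} \ra\ci Q \le \frac{(Pe, e)}{(R^{-1} e, e)} = \frac{\|P^{1/2} e\|^2}{\|R^{-1/2} e\|^2}.
\]
The substitution $e = R^{1/2} g$ (valid when $R$ is invertible; the degenerate case is handled by standard approximation) turns this ratio into $\|P^{1/2} R^{1/2} g\|^{2}/\|g\|^{2}$, which is at most $\nm P^{1/2} R^{1/2} \nm^{2} = \nm \la W \ra\ci Q^{1/2} \la W^{-1} \ra\ci Q^{1/2} \nm^{2}$ --- precisely the $Q$-term in the definition of $[W]\ci{\bA_2}$. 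Taking the supremum over $Q$ finishes the argument, and the dyadic version is identical after restricting $Q$ to $\cD$ throughout. The main obstacle is the middle step: recognizing that one must optimize the auxiliary vector $f$ rather than settle for the naive choice $f = e/|e|^2$.
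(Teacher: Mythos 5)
Your proof is correct, and it takes a different route from the paper. The paper disposes of the lemma in two lines by invoking the known identity $\| \E\ci Q\|\ci{L^2(W)\to L^2(W)} = \nm \la W\ra\ci Q^{1/2}\la W^{-1}\ra\ci Q^{1/2}\nm$ for the averaging operator $\E\ci Q f=\la f\ra\ci Q \1\ci Q$, and then restricts $\E\ci Q$ to functions of the form $\f e$ with scalar $\f$: on that subspace the norm of $\E\ci Q$ is exactly $\la w_e\ra\ci Q^{1/2}\la w_e^{-1}\ra\ci Q^{1/2}$, and restricting to a subspace cannot increase the norm. Your argument unpacks by hand the duality hidden in that identity: the pointwise Cauchy--Schwarz bound $1/(W(x)e,e)\le (W(x)^{-1}f,f)$ for $(e,f)=1$, followed by the optimization $\min\{(Rf,f):(e,f)=1\}=1/(R^{-1}e,e)$ and the substitution $e=R^{1/2}g$, is precisely the computation one would do to verify the paper's norm formula on the one-dimensional slice. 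What your version buys is self-containedness and the explicit identification of the extremizer $f=R^{-1}e/(R^{-1}e,e)$ (and the correct observation that the naive choice $f=e/|e|^2$ is lossy); what the paper's version buys is brevity and the conceptual point that the scalar $A_2$ data of $w_e$ is literally the restriction of the matrix $\bA_2$ data of $W$ to a subspace of test functions. Two minor remarks: the degeneracy you flag for $R=\la W^{-1}\ra\ci Q$ does not actually arise, since $(Rh,h)=0$ would force $W(x)^{-1}h=0$ a.e.\ on $Q$, contradicting the a.e.\ invertibility of $W$ implicit in $W\in\bA_2$; and one should of course take $e\ne 0$ so that $w_e$ is a genuine weight. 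The dyadic case is, as you say, verbatim.
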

\begin{proof}
The fact is well-known, cf.~\cite{TV}. The easiest proof is probably to recall that for the averaging operator $\E\ci Q$, $\E\ci Q f = \la f \ra\ci Q \1\ci Q$ its norm in $L^2(W)$ can be computed as 
\[
\| \E\ci Q\|\ci{L^2(W) \to L^2(W)} = \nm \la W\ra\ci Q^{1/2} \la W^{-1}\ra\ci Q^{1/2}\nm . 
\]
Restricting $\E\ci Q$ to functions of form $\f e$, where $\f$ is a scalar valued function we prove the lemma. 
\end{proof}

\begin{rem}
\label{r:A_infty-A_2}
Combining lemmas \ref{l:vec-sc-A_2} and \ref{a2} we can see that for the matrix weights $[W]\ci{A_\infty}\ut{sc} \le 4 [W]\ci{\bA_2}$, and the same holds for the dyadic versions.  
\end{rem}

\begin{lm}
Let $w_k\in A_\infty$, $k=1,2, \ldots, n$, and let $[w_k]\ci{A_\infty} \le A$. 
Then for $w:= \sum_{k=1}^n w_k $ we have $[w]\ci{A_\infty} \le A$. 

The same lemma holds with $A_\infty$ replaced by $A_\infty^\cD$. 
\end{lm}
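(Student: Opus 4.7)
The plan is to verify the defining inequality \eqref{A_infty} directly for the sum $w := \sum_{k=1}^n w_k$, using the sublinearity of the localized dyadic maximal operator $M\ci Q$ on nonnegative functions. Since each weight $w_k$ is nonnegative, for any dyadic subcube $R\in\cD(Q)$ one has $\la w\ra\ci R = \sum_{k=1}^n \la w_k\ra\ci R$. Taking the supremum over $R\ni x$ inside $Q$ and using $\sup \sum \le \sum \sup$ gives the pointwise bound
\[
M\ci Q w(x) \le \sum_{k=1}^n M\ci Q w_k(x) \qquad \text{for all } x\in Q.
\]

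Integrating this estimate over $Q$, dividing by $|Q|$, and applying the $A_\infty$ hypothesis on each $w_k$ yields
\[
\la M\ci Q w\ra\ci Q \le \sum_{k=1}^n \la M\ci Q w_k \ra\ci Q \le \sum_{k=1}^n [w_k]\ci{A_\infty} \la w_k\ra\ci Q \le A \sum_{k=1}^n \la w_k \ra\ci Q = A \la w\ra\ci Q.
\]
Since this holds for every cube $Q$, the definition \eqref{A_infty} of the $A_\infty$ characteristic yields $[w]\ci{A_\infty} \le A$, which is exactly what we want.

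For the dyadic statement ($A_\infty^\cD$), the argument is identical with the supremum in the definition of $M\ci Q$ restricted to the fixed lattice $\cD$; no changes are needed because sublinearity of $M\ci Q$ on nonnegative inputs does not depend on which family of cubes defines it. There is really no obstacle here: the only subtlety worth noting is that the bound $M\ci Q(f+g)\le M\ci Q f + M\ci Q g$ uses nonnegativity (it would fail for signed functions if one replaced $\la\cdot\ra\ci R$ by $|\la\cdot\ra\ci R|$), and weights are nonnegative, so the step is justified.
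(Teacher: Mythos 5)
Your proof is correct and follows essentially the same route as the paper's: the paper simply states that one adds the inequalities $\int_Q M\ci Q w_k\,\dd x \le A\int_Q w_k\,\dd x$, which implicitly relies on the sublinearity $M\ci Q w \le \sum_k M\ci Q w_k$ that you spell out explicitly. Your version just makes that (correct and standard) step visible; nothing further is needed.
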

\begin{proof}
The estimates $[w_k]\ci{A_\infty} \le A$ means that for each $Q$
\[
\int_Q M\ci Q w_k \dd x \le A \int_Q  w_k \dd x. 
\]
Adding these inequalities we get the conclusion of the lemma. 
\end{proof}

\section{Weighted estimates of vector valued 
operators}
\label{s:NewSquareF}

Let $W$, $V$ be matrix weights. 
We want to estimate the norm operator $V^{1/2} T W^{1/2}$ (in the non-weighted $L^2=L^2(\R^N;\R^d)$, where $T$ is either an $\omega$-\cz operator or a big Haar shift or a paraproduct. Since such operators are dominated by convex body sparse operators, it is sufficient to estimate the operators $V^{1/2} T\ci \cS W^{1/2}$, where $T\ci \cS$ is a sparse  integral operator (meaning that $\cS$ is a sparse family of cubes)
\[
T\ci\cS f (x) =\sum_{Q\in\cS} \int_{Q} K\ci Q(x,y) f(y) dy, 
\]
where $K\ci Q$ is supported on $Q\times Q$ and satisfies $\|K\ci Q\|\le |Q|^{-1}$ there, see Section \ref{s:CB-int} for details. We need to estimate operators for all possible choices of kernels $K\ci Q$, and clearly it is sufficient to estimate  the following Lerner type operator $L=L\ci\cS$, 
\begin{align}
\label{SpOp}
Lf(x) & = \sum_{Q\in\cS} \Bigl( |Q|^{-1}\int_Q \nm V^{1/2}(x) W^{1/2}(y) f(y) \nm \dd y \Bigr)\1\ci Q(x)
\\  \notag
  & = \sum_{Q\in\cS} \La \nm  V(x)^{1/2} W^{1/2} f\nm  \Ra\ci  Q\1\ci Q(x)
\end{align}
in the unweighted $L^2$. Since, as we discussed in Section \ref{s:sparse_fam},   a general sparse operator can be dominated by $3^N$ dyadic sparse operators operators, it is sufficient to consider only dyadic sparse operators.

\subsection{Some square functions and sparse operators}
Let $\cS\subset \cD$ be a dyadic sparse sequence. 
Consider the following sparse square functions:

\begin{align}
\label{SqFn-01}
S_1f(x) &:= \left( \sum_{Q\in\cS}   \La \nm  V(x)^{1/2} W^{1/2} f\nm  \Ra\ci  Q^2\1\ci Q(x)\right)^{1/2} 
\\
\label{SqFn-02}
S_2f(x) &:= \left( \sum_{Q\in\cS}   \La \nm  \La W \Ra\ci Q^{-1/2} W^{1/2} f\nm  \Ra\ci  Q^2\1\ci Q(x)\right)^{1/2}
\\
\label{SqFn-03}
S_3f(x) &:= \left( \sum_{Q\in\cS}   \La \nm  \La V \Ra\ci Q^{1/2} W^{1/2} f\nm  \Ra\ci  Q^2\1\ci Q(x)\right)^{1/2}
\end{align}
We also have \emph{scalar} versions of the square functions, acting on scalar-valued functions
\begin{align}
\label{SqFn-01-sc}
\wt S_1f(x) &:= \left( \sum_{Q\in\cS}   \La \nm  V(x)^{1/2} W^{1/2} \nm  \cdot |f|\Ra\ci  Q^2\1\ci Q(x)\right)^{1/2} 
\\
\label{SqFn-02-sc}
\wt S_2f(x) &:= \left( \sum_{Q\in\cS}   \La \nm  \La W \Ra\ci Q^{-1/2} W^{1/2} \nm  \cdot |f| \Ra\ci  Q^2\1\ci Q(x)\right)^{1/2}
\\
\label{SqFn-03-sc}
\wt S_3f(x) &:= \left( \sum_{Q\in\cS}   \La \nm  \La V \Ra\ci Q^{1/2} W^{1/2} \nm  \cdot |f| \Ra\ci  Q^2\1\ci Q(x)\right)^{1/2}
\end{align}
and the corresponding scalar version of the Lerner operator
\begin{align}
\label{SpOp-sc}
\wt Lf(x) = \sum_{Q\in\cS} \La \nm  V(x)^{1/2} W^{1/2} \nm  \cdot |f|  \Ra\ci  Q\1\ci Q(x)
\end{align}


Also, the vector sparse operators are dominated by their scalar versions, $\nm  S_k f (x)\nm \le \wt S_k \nm  f\nm (x)$, and $\nm  L f (x)\nm \le \wt L \nm  f\nm (x)$

We will need the following well-known lemma. 

\begin{lm}[Carleson Embedding Theorem]
\label{l-dCET}
Let $\mu$ be a Radon measure on $\R^N$ and let $a\ci I\ge 0$, $I\in\cD$ satisfy the Carleson measure condition
\begin{align}
\label{eq-carl}
\sum_{I\in\cD:\, I\subset J} a\ci I \le A \mu(J) . 
\end{align}
Then for any measurable $f\ge 0$ and for any $p\in(1, \infty)$
\begin{align*}
\sum_{I\in\cD }  \left( \mu(I)^{-1} \int_I f\, d\mu \right)^{p} a\ci I \le (p')^p A \|f\|^p\ci{L^p(\mu)}
\end{align*}
\end{lm}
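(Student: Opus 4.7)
The plan is to reduce the Carleson embedding estimate to the $L^p(\mu)$ boundedness of the dyadic maximal function via a layer-cake / stopping time argument. Throughout, let
\[
M_\mu g(x) := \sup_{\substack{I\in\cD\\ I\ni x}} \mu(I)^{-1}\int_I g\,\dd\mu
\]
denote the dyadic maximal function adapted to $\mu$ (defined on $\supp\mu$, set to $0$ elsewhere), and recall the sharp $L^p(\mu)$ bound $\|M_\mu g\|_{L^p(\mu)} \le p'\|g\|_{L^p(\mu)}$ (valid for any Radon measure, since the dyadic maximal operator is a martingale maximal operator and Doob's inequality applies).

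First, I would rewrite the sum using the layer-cake formula applied to the sequence $\alpha\ci I := \mu(I)^{-1}\int_I f\,\dd\mu$, giving
\[
\sum_{I\in\cD} \alpha_I^{\,p}\, a\ci I \;=\; p\int_0^\infty \lambda^{p-1} \Bigl(\sum_{I:\,\alpha_I>\lambda} a\ci I\Bigr)\,\dd\lambda.
\]
Next, for each fixed $\lambda>0$, I would consider the collection $\cF(\lambda)$ of \emph{maximal} dyadic cubes $I$ with $\alpha\ci I > \lambda$; these are pairwise disjoint, and every $I$ with $\alpha\ci I > \lambda$ is contained in some $J\in\cF(\lambda)$. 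Invoking the Carleson hypothesis \eqref{eq-carl} on each such maximal cube and using disjointness,
\[
\sum_{I:\,\alpha_I>\lambda} a\ci I \;\le\; \sum_{J\in\cF(\lambda)} \sum_{I\subset J} a\ci I \;\le\; A \sum_{J\in\cF(\lambda)}\mu(J).
\]
Since $\bigcup_{J\in\cF(\lambda)} J \subset \{x : M_\mu f(x)>\lambda\}$ (any point in a cube of $\cF(\lambda)$ witnesses the supremum), the right side is bounded by $A\,\mu\bigl(\{M_\mu f>\lambda\}\bigr)$.

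Combining the two inequalities and using layer-cake in reverse,
\[
\sum_{I\in\cD} \alpha_I^{\,p}\,a\ci I \;\le\; A\,p\int_0^\infty \lambda^{p-1}\mu(\{M_\mu f>\lambda\})\,\dd\lambda \;=\; A\,\|M_\mu f\|_{L^p(\mu)}^{\,p} \;\le\; A\,(p')^p\,\|f\|_{L^p(\mu)}^{\,p},
\]
which is the claim. The only ``obstacle'' worth flagging is justifying the sharp constant in Doob's maximal inequality for a general Radon measure $\mu$ on $\R^N$ restricted to dyadic cubes; this is standard because the dyadic filtration on $(\R^N,\mu)$ is a $\sigma$-finite martingale filtration (one may exhaust $\supp\mu$ by a countable sequence of cubes of unit side, handle each separately, and pass to the limit), so no new ingredient is needed beyond the classical $L^p$ maximal inequality with constant $p'$.
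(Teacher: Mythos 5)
Your argument is correct in substance, and it is essentially the argument the paper has in mind: the paper does not prove this lemma at all, but defers to \cite{Tr-PosDy}, which (as the paper's own remark indicates) obtains the constant $(p')^p$ precisely by reducing the sum to the $L^p(\mu)$ bound for the martingale (dyadic) maximal function — i.e.\ your route — and to a Bellman-function proof in \cite{Lai_CET}, which is a genuinely different method you did not need.

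One step of your write-up needs a patch. The claim that every $I$ with $\alpha\ci I>\lambda$ lies below a \emph{maximal} such cube can fail: if, say, $\mu$ is finite and $\int f\,\dd\mu>0$, then for small $\lambda$ every sufficiently large dyadic cube $I$ with $\mu(I)>0$ satisfies $\alpha\ci I>\lambda$, so there is no maximal one, $\cF(\lambda)$ as you defined it is empty, and the displayed inequality $\sum_{I:\,\alpha_I>\lambda}a\ci I\le\sum_{J\in\cF(\lambda)}\sum_{I\subset J}a\ci I$ is false. The fix is routine: set $\Omega_\lambda=\bigcup\{I:\alpha\ci I>\lambda\}$ and decompose it into the maximal cubes together with finitely many increasing unions of dyadic cubes (dyadic ``quadrants''); on each increasing union apply \eqref{eq-carl} along an exhausting sequence of cubes and pass to the limit. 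This still gives $\sum_{I:\,\alpha_I>\lambda}a\ci I\le A\,\mu(\Omega_\lambda)\le A\,\mu(\{M_\mu f>\lambda\})$, and with that adjustment (plus the convention that cubes with $\mu(I)=0$ are discarded) the rest of your proof, including the appeal to Doob's inequality with constant $p'$ for the two-sided dyadic filtration, goes through.
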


This lemma (with some constant $C(p)$ instead of $(p')^p$) is well-known. The explanation of why it holds with constant $(p')^p$, i.e., with the same constant as in the $L^p$ estimate of the martingale maximal function is explained, for example in \cite[S.~4]{Tr-PosDy}. A direct proof of this lemma via Bellman function approach is also presented in \cite{Lai_CET}. 

\subsection{Weighted estimates of sparse square functions and sparse operators}

In this section we deal only with dyadic operators, and all our $A_2$ and $A_\infty$ conditions are the dyadic ones, like $A_\infty^\cD$, $\bA_2^\cD$ etc. 
We skip the index $\cD$ to simplify the writeup. 

Also to simplify the writing and reading we will use the following notation in skipping variables. 
For a vector valued or matrix-valued function $F$ the symbol $\nm F\nm$ will denote the function $x\mapsto \nm F(x)\nm$. The symbol $\la \nm F\nm \ra\ci Q$ denotes the average of this function. 

For example, 
\[
\La \nm  \La W \Ra\ci Q^{-1/2} W^{1/2} \nm \cdot|f| \Ra\ci  Q = |Q|^{-1} \int_Q \La \nm  \La W \Ra\ci Q^{-1/2} W^{1/2}(y) \nm \cdot|f(y)| \, \dd y, 
\]
and 
\[
 \La \nm  V(x)^{1/2} W^{1/2} \nm  \cdot|f|\Ra\ci  Q  =
 |Q|^{-1} \int_Q \La \nm  \La V(x) \Ra\ci Q^{1/2} W^{1/2}(y) \nm \cdot|f(y)| \, \dd y
\]

\begin{lm}
\label{l:S-02}
Let $d\times d $ matrix weight $W$ satisfy the scalar $A_\infty$ condition, and let $\cS$ be a $\lambda$-Carleson dyadic family in the sense of Definition \ref{df:dyCarlFam}. 
Then 
\begin{align*}
\| \wt S_2\| \ci{L^2\to L^2}^2 \le {C}{\lambda} 2^N d [W]\ci{A_\infty}\ut{sc}, 
\end{align*}
where $C$ is an absolute constant, $N$ is the dimension of the underlying space $\R^N$.
\end{lm}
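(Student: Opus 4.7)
My plan is to reduce the matrix-weighted estimate to $d$ scalar reverse-Hölder estimates on the eigenweights of $\la W\ra\ci Q$, and then close with the Carleson Embedding Theorem. Expanding the $L^{2}$-norm squared,
\[
\|\wt S_{2} f\|_{2}^{2}=\sum_{Q\in\cS}\la h\ci Q\cdot|f|\ra\ci Q^{\,2}\,|Q|,\qquad h\ci Q(y):=\nm\la W\ra\ci Q^{-1/2}W^{1/2}(y)\nm,
\]
so I need a quantitative control of $\la h\ci Q\,|f|\ra\ci Q$ in terms of the scalar $A_{\infty}$ characteristic.

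First I would pass from the matrix to scalar weights. Since $h\ci Q(y)^{2}$ is the operator norm of the positive matrix $\la W\ra\ci Q^{-1/2}W(y)\la W\ra\ci Q^{-1/2}$, it is bounded by its trace; in an orthonormal eigenbasis $\{e\ci i(Q)\}_{i=1}^{d}$ of $\la W\ra\ci Q$ (whose eigenvalues coincide with $\la w\ci{e\ci i(Q)}\ra\ci Q$) this gives
\[
h\ci Q(y)^{2}\le\trace\bigl(\la W\ra\ci Q^{-1}W(y)\bigr)=\sum_{i=1}^{d}\frac{w\ci{e\ci i(Q)}(y)}{\la w\ci{e\ci i(Q)}\ra\ci Q}.
\]
Next, fixing $\delta\in(0,1]$, I would apply the convexity inequality $(\sum_{i}a\ci i)^{1+\delta}\le d^{\delta}\sum_{i}a\ci i^{1+\delta}$ and then average over $Q$. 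By Lemma \ref{l:vec-sc-A_2} every $w\ci{e\ci i(Q)}$ lies in $A_{\infty}^{\cD}$ with characteristic at most $[W]\ci{A_{\infty}}\ut{sc}$, so choosing $\delta=2^{-N-1}/[W]\ci{A_{\infty}}\ut{sc}$ and invoking Theorem \ref{RH} bounds each summand by $2$, producing the clean key estimate $\la h\ci Q^{\,2(1+\delta)}\ra\ci Q\le 2\,d^{\,1+\delta}$.

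The second step is to combine this pointwise-in-$Q$ bound with the Carleson Embedding Theorem. Hölder's inequality with conjugate exponents $p=2(1+\delta)$ and $p'=2(1+\delta)/(1+2\delta)$ will give
\[
\la h\ci Q|f|\ra\ci Q^{\,2}\le\la h\ci Q^{\,2(1+\delta)}\ra\ci Q^{\,1/(1+\delta)}\la |f|^{p'}\ra\ci Q^{\,2/p'}\le 2d\,\la|f|^{p'}\ra\ci Q^{\,q},\qquad q:=2/p'>1.
\]
Since the $\lambda$-Carleson hypothesis means that the weights $a\ci Q:=|Q|$ satisfy the Carleson condition with constant $\lambda$ relative to Lebesgue measure (the condition for all $J\in\cD$ follows at once by decomposing into maximal cubes of $\cS\cap\cD(J)$), Lemma \ref{l-dCET} will yield
\[
\sum_{Q\in\cS}\la h\ci Q|f|\ra\ci Q^{\,2}|Q|\le 2d\,(q')^{q}\lambda\,\bigl\|\,|f|^{p'}\bigr\|_{q}^{\,q}=2d\,(q')^{q}\lambda\,\|f\|_{2}^{\,2},
\]
because the exponents are rigged so that $p'q=2$.

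Finally I would unpack the constant. With $\delta=2^{-N-1}/[W]\ci{A_{\infty}}\ut{sc}$ one has $q'=(1+2\delta)/\delta$, and the factor $(q')^{q-1}=\exp\bigl(\tfrac{\delta}{1+\delta}\log\tfrac{1+2\delta}{\delta}\bigr)$ is uniformly bounded for $\delta\in(0,1]$; hence $(q')^{q}\lesssim q'\lesssim 1/\delta=2^{N+1}[W]\ci{A_{\infty}}\ut{sc}$, which delivers the advertised bound $C\lambda\,2^{N}d\,[W]\ci{A_{\infty}}\ut{sc}$. The main obstacle I anticipate is setting up the right matrix-to-scalar reduction in the first step: a naive Cauchy–Schwarz only produces $\la|f|^{2}\ra\ci Q$ on the right, which is not summable against the Carleson sequence $\{|Q|\}$. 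The operator-norm-to-trace inequality, together with the exact cancellation between the Jensen factor $d^{\delta}$ and the reverse Hölder factor $\la w\ra^{1+\delta}$, is what closes the argument linearly in $d$.
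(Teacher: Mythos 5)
Your proof is correct and follows essentially the same route as the paper's: bound the operator norm of $\la W\ra\ci Q^{-1/2}W^{1/2}(y)$ by the trace, apply the scalar reverse H\"older inequality with $\delta\sim 2^{-N}/[W]\ci{A_\infty}\ut{sc}$ after H\"older with exponent $2(1+\delta)$, and close with the Carleson Embedding Theorem, with the same bookkeeping $(q')^{q}\lesssim 1/\delta$. The only cosmetic difference is that you diagonalize $\la W\ra\ci Q$ and apply reverse H\"older to each eigenweight separately via the power-mean inequality $(\sum_i a_i)^{1+\delta}\le d^{\delta}\sum_i a_i^{1+\delta}$, whereas the paper treats $y\mapsto\trace(\la W\ra\ci Q^{-1/2}W(y)\la W\ra\ci Q^{-1/2})$ as a single $A_\infty$ weight (a sum of $d$ weights $w_e$, using the lemma on sums of $A_\infty$ weights); both steps are equally valid.
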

\begin{proof}
The proof uses the reverse H\"{o}lder inequality for scalar $A_\infty$ weights. 
Let $r>2$, $1/r'+1/r=1$. 
By H\"{o}lder we have 
\begin{align}
\label{13.05}
\La \nm  \La W \Ra\ci Q^{-1/2} W^{1/2} \nm \cdot|f| \Ra\ci  Q  
\le  \la \nm \la W\ra_Q^{-1/2} W^{1/2} \nm ^r \ra_Q^{1/r} \cdot \la | f |^{r'} \ra_Q^{1/r'}\,.
\end{align}

Choose $r= 2(1+\delta)$, where $\delta = 1/(2^{N+1}[W]_{A_\infty}\ut{sc})$, so 
for any $e\in \R^d$ the scalar weight $w_e =(We,e)$ satisfies the reverse H\"{o}lder inequality 
\begin{align}
\label{RH-02}
\La w_e^{1+\delta} \Ra\ci Q \le 2 \La w_e \Ra\ci Q^{1+\delta} , 
\end{align}
see Theorem \ref{RH}. 

We can estimate
\begin{align*}
\nm  \La W \Ra\ci Q^{-1/2} W^{1/2} (y)\nm ^2 \le \nm  W(y)^{1/2} \La W \Ra\ci Q^{-1/2}\nm ^2_{\fS_2}
=\trace ( \La W \Ra\ci Q^{-1/2}    W(y)  \La W \Ra\ci Q^{-1/2})
\end{align*}
The scalar weight $w$, $w(y)= \trace ( \La W \Ra\ci Q^{-1/2}    W(y)  \La W \Ra\ci Q^{-1/2})$ is then an $A_\infty$ weight with $[w]\ci{A_\infty} \le [W]\ci{A_\infty\ut{sc}}$ (as a sum of $d$ such weights). 

Therefore, replacing in \eqref{13.05} the operator norm by the Hilbert--Schmidt norm we can apply the reverse H\"{o}lder  to get 
\begin{align*}
 \La \nm  \La W \Ra\ci Q^{-1/2} W^{1/2} \nm \cdot|f| \Ra\ci  Q 
\le  
2 \la \nm \la W\ra_Q^{-1/2} W^{1/2} \nm _{\fS_2}^2 \ra_Q^{1/2} \cdot \la \nm  f\nm ^{r'} \ra_Q^{1/r'}\,.
\end{align*}
But
\begin{align}
\label{trace-01}
\La\nm \la W\ra_Q^{-1/2} W^{1/2} \nm _{\fS_2}^2\Ra =
\trace |Q|^{-1} \int_Q \la W\ra_Q^{-1/2} W(y) \la W\ra_Q^{-1/2} \dd y =
\trace I_d =d
\end{align}
so 
\begin{align*}
\La \nm  \La W \Ra\ci Q^{-1/2} W^{1/2} \nm \cdot|f| \Ra\ci  Q  
\le  
2 d^{1/2} \la \nm  f\nm ^{r'} \ra_Q^{1/r'}\,.
\end{align*}
Therefore
\begin{align*}
\|   \La \nm  \La W \Ra\ci Q^{-1/2} W^{1/2} \nm \cdot|f| \Ra\ci  Q \1\ci Q\|\ci{L^2}^2 \le 4d \la|f|\ra\ci Q^{2/r'} |Q|, 
\end{align*}
so
\begin{align*}
\| \wt S_2 f\|\ci{L^2}^2 & = \sum_{Q\in\cS} \|   \La \nm  \La W \Ra\ci Q^{-1/2} W^{1/2} \nm \cdot|f| \Ra\ci  Q \1\ci Q\|\ci{L^2}^2
\\
& \le 4d \sum_{Q\in\cS} \la|f|\ra\ci Q^{2/r'} |Q| .
\end{align*}

To estimate the last sum denote $p=2/r'$, $\f(x) = \nm  f(x)\nm ^{r'}$, and apply Lemma \ref{l-dCET} with $\mu$ being the Lebesgue measure and 
\[
a\ci Q =\left\{ \begin{array}{ll} | Q| \qquad & Q\in\cS ;\\ 0 & Q\notin \cS. \end{array} \right.
\] 
We get  
\begin{align*} 
\sum_{Q\in\cS} \la \nm  f\nm ^{r'} \ra_Q^{2/r'} | Q|  =  \sum_{Q\in\cS} \la \f \ra_Q^{p} |Q|
 \le \lambda (p')^p \| \f\| \ci{L^p}^p =
\lambda (p')^p \|  f\| \ci{L^2}^2 \,.
\end{align*}
Direct computations show 
\begin{align*}
p' = 2+1/\delta \le 3/\delta, \qquad p=\frac{1+2\delta}{1+\delta} \le 1+\delta
\end{align*}
so 
\begin{align*}
(p')^p \le (3/\delta)^{1+\delta} \le C /\delta =C 2^{N+1} [W]\ci{A_\infty\ut{sc}}
\end{align*}
where $C$ is an absolute constant (maximum of the function $\delta\mapsto 3(3/\delta)^{\delta}$ on $(0, 1]$). 
\end{proof}

\begin{lm}
\label{l:S-03}
Let $d\times d $ matrix weight $W$ satisfy the scalar $A_\infty$ condition, and let the weights $V$ and $W$ satisfy the two weight matrix $\mathbf A_2$ condition. Let also $\cS$ be a $\lambda$-Carleson dyadic family in the sense of Definition \ref{df:dyCarlFam}. 
Then 
\begin{align*}
\| \wt S_3\| \ci{L^2\to L^2}^2 \le {C}{\lambda} 2^N d \cdot[W,V]\ci{\mathbf A_2}[W]\ci{A_\infty}\ut{sc}, 
\end{align*}
where $C$ is an absolute constant, $N$ is the dimension of the underlying space $\R^N$.
\end{lm}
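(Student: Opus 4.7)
The plan is to mimic the proof of Lemma \ref{l:S-02} almost verbatim, with the only essential change being the computation of the trace average, which now produces the extra factor $[W,V]\ci{\bA_2}$.

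First I would set $r=2(1+\delta)$ with $\delta = 1/(2^{N+1}[W]\ci{A_\infty}\ut{sc})$, exactly as before, and apply H\"older's inequality in the inner average to split
\[
\La \nm  \La V \Ra\ci Q^{1/2} W^{1/2} \nm \cdot|f| \Ra\ci  Q
\le \La \nm  \La V \Ra\ci Q^{1/2} W^{1/2}\nm^{r} \Ra\ci Q^{1/r}  \La |f|^{r'}\Ra\ci Q^{1/r'} .
\]
I would then dominate the operator norm by the Hilbert--Schmidt norm, so that
\[
\nm \La V \Ra\ci Q^{1/2} W^{1/2}(y) \nm^2 \le \trace \bigl( \La V \Ra\ci Q^{1/2} W(y) \La V \Ra\ci Q^{1/2}\bigr) =: w(y) .
\]

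The next step is to verify that $w$ is a scalar $A_\infty$ weight with $[w]\ci{A_\infty}\le [W]\ci{A_\infty}\ut{sc}$. This is immediate: if $\{e_i\}$ is an orthonormal basis and $e_i' := \La V\Ra\ci Q^{1/2} e_i$, then $w = \sum_{i=1}^d w_{e_i'}$, and each summand is an $A_\infty$ weight with characteristic at most $[W]\ci{A_\infty}\ut{sc}$, so the last lemma of Section \ref{ss:MatrW} applies. Theorem \ref{RH} then yields the reverse H\"older inequality $\langle w^{1+\delta}\rangle\ci Q \le 2 \langle w\rangle\ci Q^{1+\delta}$, which, since $r/2 = 1+\delta$, lets me bound
\[
\La \nm \La V \Ra\ci Q^{1/2} W^{1/2}\nm^{r}\Ra\ci Q^{1/r} \le 2^{1/r} \La w\Ra\ci Q^{1/2} .
\]
At this point, the only difference from the proof of Lemma \ref{l:S-02} shows up: instead of the identity computation \eqref{trace-01}, one gets
\[
\La w\Ra\ci Q = \trace \bigl(\La V \Ra\ci Q^{1/2} \La W\Ra\ci Q \La V\Ra\ci Q^{1/2} \bigr) \le d \,\nm \La V\Ra\ci Q^{1/2}\La W\Ra\ci Q^{1/2}\nm^2 \le d [W,V]\ci{\bA_2} .
\]

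Putting this together, the inner average is bounded by $2\cdot(d[W,V]\ci{\bA_2})^{1/2} \La |f|^{r'}\Ra\ci Q^{1/r'}$. The final step is the same Carleson embedding argument: the $L^2$-norm squared of $\wt S_3 f$ becomes
\[
\sum_{Q\in\cS} \La\nm \La V\Ra\ci Q^{1/2} W^{1/2}\nm\cdot|f|\Ra\ci Q^2 |Q| \le 4 d [W,V]\ci{\bA_2}\sum_{Q\in\cS}\la |f|^{r'}\ra\ci Q^{2/r'} |Q|,
\]
and Lemma \ref{l-dCET} applied with $p=2/r'$, $\varphi = |f|^{r'}$, and $a\ci Q = |Q|$ (which is $\lambda$-Carleson by hypothesis) gives $\sum_{Q\in\cS}\langle \varphi\rangle\ci Q^p |Q| \le \lambda (p')^p \|f\|_{L^2}^2$. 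The same elementary estimate $(p')^p \le C 2^{N+1}[W]\ci{A_\infty}\ut{sc}$ already derived in Lemma \ref{l:S-02} finishes the proof with the stated constant.

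The proof has no real obstacle beyond Lemma \ref{l:S-02}; the whole point is that the joint $\bA_2$-condition is what one needs to pay for replacing $\La W\Ra\ci Q^{-1/2}$ by $\La V\Ra\ci Q^{1/2}$ in the trace computation. No other step changes.
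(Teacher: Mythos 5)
Your proposal is correct and follows exactly the route of the paper's own proof: the paper simply says to repeat the argument of Lemma \ref{l:S-02} with $\La W\Ra\ci Q^{-1/2}$ replaced by $\La V\Ra\ci Q^{1/2}$, the sole change being the trace computation $\trace\bigl(\la V\ra\ci Q^{1/2}\la W\ra\ci Q\la V\ra\ci Q^{1/2}\bigr)\le d\,[W,V]\ci{\bA_2}$ in place of \eqref{trace-01}. Your explicit verification that $w(y)=\trace\bigl(\La V\Ra\ci Q^{1/2}W(y)\La V\Ra\ci Q^{1/2}\bigr)$ is a sum of $d$ weights $w_{e_i'}$ and hence has $A_\infty$ characteristic at most $[W]\ci{A_\infty}\ut{sc}$ is exactly the argument the paper uses (implicitly, via the summation lemma of Section \ref{ss:MatrW}) in Lemma \ref{l:S-02}.
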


\begin{proof}
To prove this lemma one can just rewrite the proof of Lemma \ref{l:S-02} word by word, replacing each occurrence of $\La W \Ra\ci Q^{-1/2}$ by $\La V\Ra\ci Q^{1/2}$. The only difference will be that instead of \eqref{trace-01} we will have 
\begin{align}
\label{trace-02}
\La\nm \la V\ra_Q^{1/2} W^{1/2} \nm _{\fS_2}^2\Ra\ci Q  & =
\trace |Q|^{-1} \int_Q \la V\ra_Q^{1/2} W(y) \la V\ra_Q^{1/2} \dd y
\\ 
\notag
& =
\trace (\la V\ra_Q^{1/2} \la W\ra_Q \la V\ra_Q^{1/2}  \le d \cdot[W,V]\ci{\mathbf A_2}, 
\end{align}
which accounts for an extra factor $[W,V]\ci{\mathbf A_2}$. 
\end{proof}

\begin{lm}
\label{l:S-01}
Let the $d\times d $ matrix weight $W$ satisfy the scalar $A_\infty$ condition, and let the weights $V$ and $W$ satisfy the two weight matrix $\mathbf A_2$ condition. Let also $\cS$ be a $\lambda$-Carleson dyadic family in the sense of Definition \ref{df:dyCarlFam}. 

Then for the square function $\wt S_1$ defined by  \eqref{SqFn-01}
\begin{align*}
\| \wt S_1\| \ci{L^2\to L^2}^2 \le {C}\lambda 2^N d\cdot [W,V]\ci{\mathbf A_2} [W]\ci{A_\infty}\ut{sc}, 
\end{align*}
where $C$ is an absolute constant, $N$ is the dimension of the underlying space $\R^N$.
\end{lm}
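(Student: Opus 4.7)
The plan is to reduce the estimate for $\wt S_1$ to Lemma \ref{l:S-03} (or equivalently Lemma \ref{l:S-02}) by factoring the integrand $\nm V(x)^{1/2} W^{1/2}(y)\nm$ in a way that separates the $x$-dependence from the $y$-dependence. The key point is that $\wt S_1$ differs from $\wt S_3$ only in that the normalizing matrix $\la V\ra\ci Q^{1/2}$ inside the average has been replaced by the pointwise value $V(x)^{1/2}$, and this replacement should cost at most a dimensional constant.

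Concretely, for each $Q\in\cS$ I would write
\[
V(x)^{1/2} W^{1/2}(y) = \bigl( V(x)^{1/2} \la V\ra\ci Q^{-1/2}\bigr)\bigl(\la V\ra\ci Q^{1/2} W^{1/2}(y)\bigr)
\]
(reading $\la V\ra\ci Q^{-1/2}$ as the Moore--Penrose pseudo-inverse if $\la V\ra\ci Q$ is singular), and apply submultiplicativity of the operator norm to obtain
\[
\nm V(x)^{1/2} W^{1/2}(y)\nm \le \nm V(x)^{1/2} \la V\ra\ci Q^{-1/2}\nm \cdot \nm \la V\ra\ci Q^{1/2} W^{1/2}(y)\nm .
\]
Since the first factor is independent of $y$, it pulls out of the $y$-average, giving
\[
\La \nm V(x)^{1/2} W^{1/2}\nm \cdot |f|\Ra\ci Q \le \nm V(x)^{1/2} \la V\ra\ci Q^{-1/2}\nm \cdot \La \nm \la V\ra\ci Q^{1/2} W^{1/2}\nm \cdot |f|\Ra\ci Q .
\]

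Squaring, multiplying by $\1\ci Q(x)$, summing over $Q\in\cS$ and integrating in $x$ then yields
\[
\|\wt S_1 f\|_2^2 \le \sum_{Q\in\cS} \biggl(\int_Q \nm V(x)^{1/2} \la V\ra\ci Q^{-1/2}\nm^2\,dx\biggr) \La \nm \la V\ra\ci Q^{1/2} W^{1/2}\nm \cdot |f|\Ra\ci Q^2 .
\]
The inner integral I would estimate exactly as in the proofs of Lemmas \ref{l:S-02} and \ref{l:S-03}, i.e.~by dominating the operator norm by the Hilbert--Schmidt norm and then computing the trace:
\[
\int_Q \nm V(x)^{1/2} \la V\ra\ci Q^{-1/2}\nm^2 \,dx \le \int_Q \trace\bigl(\la V\ra\ci Q^{-1/2} V(x) \la V\ra\ci Q^{-1/2}\bigr) dx = d |Q| .
\]
Plugging this in gives $\|\wt S_1 f\|_2^2 \le d\,\|\wt S_3 f\|_2^2$, and Lemma \ref{l:S-03} immediately yields the claimed bound (up to the dependence on $d$, which my argument produces as $d^2$; a sharper bookkeeping or the same factorization with $\la W\ra\ci Q$ replacing $\la V\ra\ci Q$ leads to the symmetric reduction $\|\wt S_1 f\|_2^2 \le d[W,V]\ci{\bA_2}\|\wt S_2 f\|_2^2$, with the same order of magnitude).

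The proof is essentially a direct adaptation of the previous two lemmas, and no genuine new difficulty arises. The only point that requires a little care is the possible degeneracy of $\la V\ra\ci Q$, which is handled by passing to the pseudo-inverse and working on $\Range \la V\ra\ci Q$; on the orthogonal complement the relevant contribution vanishes, so the factorization above remains valid almost everywhere. Apart from this, the estimate is entirely routine once the decoupling trick is in place.
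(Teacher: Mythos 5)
Your proposal is correct in substance, but it takes a genuinely different route from the paper, and it proves a very slightly weaker bound. The paper does \emph{not} reduce $\wt S_1$ to $\wt S_3$; it reruns the whole H\"{o}lder + reverse-H\"{o}lder + Carleson-embedding argument of Lemma \ref{l:S-02} directly for $\wt S_1$, using for each fixed $x$ the scalar $A_\infty$ weight $y\mapsto \trace(V(x)^{1/2}W(y)V(x)^{1/2})$, and then integrating in $x$ via the exact Fubini identity
\begin{align*}
|Q|^{-1}\iint_{Q\times Q}\trace\bigl(V(x)W(y)\bigr)\,\dd x\,\dd y
= |Q|\,\trace\bigl(\la W\ra\ci Q\la V\ra\ci Q\bigr)
= |Q|\,\nm \la W\ra\ci Q^{1/2}\la V\ra\ci Q^{1/2}\nm_{\fS_2}^2
\le |Q|\,d\,[W,V]\ci{\bA_2}.
\end{align*}
Because the Hilbert--Schmidt bound is invoked only once there, the paper gets the single factor $d$ claimed in the statement. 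Your decoupling $\nm V(x)^{1/2}W(y)^{1/2}\nm\le \nm V(x)^{1/2}\la V\ra\ci Q^{-1/2}\nm\cdot\nm\la V\ra\ci Q^{1/2}W(y)^{1/2}\nm$ replaces that exact computation by a product of two separate Hilbert--Schmidt estimates (one in the $x$-integral, one hidden inside Lemma \ref{l:S-03}), which is why you land on $d^2$; you correctly flag this. What your route buys is economy: $\wt S_1\le d^{1/2}\,\wt S_3$ pointwise in $L^2$-norm, so no new analysis is needed beyond the already-proved lemma; what the paper's route buys is the sharper dimensional constant. Your treatment of a possibly singular $\la V\ra\ci Q$ via the pseudo-inverse is also correct and necessary (the two-weight $\bA_2$ condition is only an upper bound, so it does not force invertibility of the averages): since $(V(\cdot)e,e)=0$ a.e.~on $Q$ for $e\in\ker\la V\ra\ci Q$, the identity $V(x)^{1/2}=V(x)^{1/2}\la V\ra\ci Q^{-1/2}\la V\ra\ci Q^{1/2}$ holds a.e.~on $Q$, and the trace of the resulting orthogonal projection is still at most $d$. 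Since the paper's main theorems allow $C=C(T,N,d)$, the loss of a factor $d$ is immaterial for the applications, but it does mean your argument does not literally recover the stated constant.
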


\begin{proof}
The proof is similar to the proof of Lemma \ref{l:S-02}.  Instead of \eqref{13.05} we write
\begin{align}
\label{13.07}
 \La \nm  V(x)^{1/2} W^{1/2} \nm  \cdot|f|\Ra\ci  Q 
\le  \la \nm  V(x)^{1/2} W^{1/2} \nm ^r \ra_Q^{1/r}  \la | f |^{r'} \ra_Q^{1/r'}\,,
\end{align}
where $r$ is the same as in the proof of Lemma \ref{l:S-02}. 

Then we notice that for any fixed $x$ the weight $w$, 
\[
w(y)= \nm  W(y)^{1/2} V(x)^{1/2}\nm _{\fS_2}^2 =\trace (V(x)^{1/2} W(y) V(x)^{1/2}) 
\]
satisfies the $A_\infty$ condition with $[w]\ci{A_\infty}\le [W]\ci{A_\infty\ut{sc}}$. 
Therefore we can use the reverse H\"{o}lder inequality \eqref{RH-02} to get from \eqref{13.07}
\begin{align}
\label{13.08}
 \La \nm  V(x)^{1/2} W^{1/2} \nm  \cdot|f|\,\Ra\ci  Q 
\le  2 \la \nm  V(x)^{1/2} W^{1/2} \nm ^2_{\fS_2} \ra_Q^{1/2} \cdot \la | f |^{r'} \ra_Q^{1/r'}\,.
\end{align}
But 
\begin{align*}
\nm  V(x)^{1/2} W(y)^{1/2} \nm _{\fS_2}^2 = \trace (V(x)W(y))
\end{align*}
so
\begin{align}
\label{trace-03}
\int_Q \La\nm  V(x)^{1/2} W^{1/2} \nm ^2_{\fS_2} \Ra\ci{Q} \dd x & =
|Q|^{-1} \iint_{Q\times Q} \trace(V(x)W(y)) \dd x \dd y 
\\ \notag  &= 
|Q| \trace(\la W\ra\ci Q \la V\ra\ci Q ) 
\\
\notag
&= |Q| \nm  \la W\ra\ci Q^{1/2}\la V\ra\ci Q^{1/2}\nm _{\fS_2}^2 \le |Q|d\cdot [V, W]\ci{\mathbf A_2}.  
\end{align}

Using the above inequality we can estimate
\begin{align*}
\| \wt S_1 f\|\ci{L^2}^2 & = \sum_{Q\in\cS}  \int_Q \la \nm V^{1/2}(x) W^{1/2} \nm \cdot |f|\ra\ci Q^2 \dd x  &&
\\
& \le 4  \sum_{Q\in\cS} \la |f|^{r'}\ra^{2/r'}\ci Q \int_Q \la \nm  V(x)^{1/2} W^{1/2} \nm ^2_{\fS_2} \ra\ci Q  \dd x  && \text{by \eqref{13.08}}\\
& \le 4 d\cdot [V, W]\ci{\mathbf A_2} \sum_{Q\in\cS} \la |f|^{r'}\ra^{2/r'}\ci Q |Q| && \text{by \eqref{trace-03}}
\end{align*}
So, it remains to estimate $\sum_{Q\in\cS} \la\nm  f\nm ^{r'}\ra^{2/r'}|Q|$. But it  is already done in the proof of Lemma \ref{l:S-02}, where it is shown that 
\[
\sum_{Q\in\cS} \la\nm  f\nm ^{r'}\ra^{2/r'}|Q| \le C \lambda 2^{N+1} [W]\ci{A_\infty\ut{sc}}, 
\] 
so we are done. 
\end{proof}


\begin{rem}
In the definition of the square functions we can replace summation over a sparse sequence by the summation with Carleson weights. For example, instead of $\wt S_1$ in \eqref{SqFn-01-sc} we can consider
\begin{align}
\label{SqFn-01-c}
S_1f(x) &:= \left( \sum_{Q\in\cD} a\ci Q  \La \nm  V(x)^{1/2} W^{1/2}\nm \cdot | f |  \Ra\ci  Q^2\1\ci Q(x)\right)^{1/2} 
\end{align}
where $a =\{a\ci Q\}\ci{Q\in\cD}$, $a\ci Q\ge 0$ is a $\lambda$-Carleson sequence, 
\begin{align}
\label{carl-01}
\sup_{R\in\cD} |R|^{-1} \sum_{Q\in\cD(R)} a\ci Q |Q| \le \lambda <\infty;  
\end{align}
similarly for all other square functions. 

Lemmas \ref{l:S-02}, \ref{l:S-03} and  \ref{l:S-01} with absolutely the same proofs will hold for these square functions. 
\end{rem}

\begin{lm}
\label{l:L-sc}
Let $d\times d$ matrix weights $V$ and $W$ satisfy the scalar $A_\infty\ut{sc}$ condition, and let them satisfy the joint $\mathbf A_2$ condition. Then the norm of the scalar Lerner operator $\wt L$  satisfies the estimate 
\begin{align*}
 \| \wt L\| \ci{L^2\to L^2}\le \frac{C}{1-\kappa} 2^N d \cdot [W,V]\ci{\mathbf A_2}^{1/2} [W]\ci{A_\infty\ut{sc}}^{1/2}[V]\ci{A_\infty\ut{sc}}^{1/2}
\end{align*}
\end{lm}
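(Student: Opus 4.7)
The plan is to dualize $\wt L$ and reduce its $L^2$-norm to a product of the square-function norms already bounded in Lemmas \ref{l:S-02} and \ref{l:S-03}. Fix a nonnegative test function $g \in L^2$ with $\|g\|_2 = 1$; expanding the inner product,
\[
\La \wt L f, g\Ra = \sum_{Q\in\cS} |Q|^{-1} \iint_{Q\times Q} g(x)\, \nm V^{1/2}(x) W^{1/2}(y)\nm \cdot |f(y)|\, \dd x\, \dd y.
\]

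The decisive step is a matrix factorization. Inserting the identity $\La V\Ra\ci Q^{1/2}\La V\Ra\ci Q^{-1/2}$ between $V^{1/2}(x)$ and $W^{1/2}(y)$ and invoking submultiplicativity of the operator norm together with $\nm A\nm = \nm A^*\nm$ gives
\[
\nm V^{1/2}(x) W^{1/2}(y)\nm \le \nm \La V\Ra\ci Q^{-1/2} V^{1/2}(x)\nm \cdot \nm \La V\Ra\ci Q^{1/2} W^{1/2}(y)\nm.
\]
This decouples the $x$- and $y$-integrations, so each summand factors as $|Q|\cdot A\ci Q \cdot B\ci Q$ with $A\ci Q = \La g\, \nm \La V\Ra\ci Q^{-1/2} V^{1/2}\nm\Ra\ci Q$ and $B\ci Q = \La \nm \La V\Ra\ci Q^{1/2} W^{1/2}\nm \cdot |f|\Ra\ci Q$.

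Applying Cauchy--Schwarz in the sum over $Q\in\cS$ and recognizing $\sum_Q |Q| A\ci Q^2$ and $\sum_Q |Q| B\ci Q^2$ as the squared $L^2$-norms of the associated sparse square functions yields
\[
\La \wt L f, g\Ra \le \Bigl(\sum_{Q\in\cS} |Q| A\ci Q^2\Bigr)^{1/2}\Bigl(\sum_{Q\in\cS} |Q| B\ci Q^2\Bigr)^{1/2}.
\]
The first factor is precisely $\|\wt S_2 g\|_2$ with the roles of $V$ and $W$ interchanged; since $V$ also satisfies the scalar $A_\infty$ condition, Lemma \ref{l:S-02} applies by symmetry and bounds this factor by $\sqrt{C\lambda\, 2^N d\, [V]\ci{A_\infty\ut{sc}}}$. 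The second factor is $\|\wt S_3 f\|_2$, bounded by Lemma \ref{l:S-03} by $\sqrt{C\lambda\, 2^N d\, [W,V]\ci{\bA_2} [W]\ci{A_\infty\ut{sc}}}$. Multiplying these gives
\[
\|\wt L\|\ci{L^2 \to L^2} \le C\lambda\, 2^N d\, [W,V]\ci{\bA_2}^{1/2} [W]\ci{A_\infty\ut{sc}}^{1/2} [V]\ci{A_\infty\ut{sc}}^{1/2},
\]
where $\lambda$ is the Carleson constant of $\cS$. For a $\kappa$-sparse family (Definition \ref{df:Sparse}) one has the iterated geometric bound $\lambda \le 1/(1-\kappa)$, producing the claimed prefactor $C/(1-\kappa)$.

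The main technical obstacle is the pseudoinverse $\La V\Ra\ci Q^{-1/2}$, which is only literally defined when $\La V\Ra\ci Q$ is strictly positive. Two routes handle the degenerate case: one may interpret $\La V\Ra\ci Q^{-1/2}$ as the Moore--Penrose pseudoinverse and observe that any $e \in \mathrm{Ker}\La V\Ra\ci Q$ satisfies $\int_Q (V(y)e,e)\,\dd y = 0$, whence $V^{1/2}(y)e=0$ for a.e.\ $y \in Q$, so the factorization holds almost everywhere on $Q\times Q$; alternatively, one performs the standard regularization $V \mapsto V + \e \bI\ci d$, notes that both $[V+\e\bI\ci d]\ci{A_\infty\ut{sc}}$ and $[W, V+\e\bI\ci d]\ci{\bA_2}$ admit uniform bounds as $\e\to 0^+$, and passes to the limit.
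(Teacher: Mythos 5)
Your proof is correct and follows essentially the same route as the paper: insert $\la V\ra\ci Q^{-1/2}\la V\ra\ci Q^{1/2}$ to factor $\nm V^{1/2}(x)W^{1/2}(y)\nm$, decouple the two integrations, apply Cauchy--Schwarz over $Q\in\cS$, and recognize the two factors as $\|\wt S_2^V g\|_2$ and $\|\wt S_3 f\|_2$, bounded by Lemmas \ref{l:S-02} and \ref{l:S-03}. Your added care with the possibly singular $\la V\ra\ci Q$ and the explicit $\lambda\le 1/(1-\kappa)$ bookkeeping are points the paper leaves implicit, but they do not change the argument.
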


\begin{proof}
Take $f, g\in L^2$ and let us estimate $(\wt L f, g)\ci{L^2}$. Without loss of generality we can assume that $f,g\ge 0$. So
\begin{align*}
(\wt L f, g)\ci{L^2} &=\sum_{Q\in\cS}|Q|^{-1} \iint_{Q\times Q} f(y) g(x) \nm  V(x)^{1/2} W(y)^{1/2} \nm  \dd x \dd y 
\\
&\le 
\sum_{Q\in\cS}|Q|^{-1} \iint_{Q\times Q}  g(x) \nm  V(x)^{1/2}\la V\ra\ci Q^{-1/2}\nm \cdot  \nm  \la V\ra\ci Q^{1/2} W(y)^{1/2} \nm  f(y) \dd x \dd y 
\\ 
&= \sum_{Q\in\cS} \la \nm  V^{1/2} \la V\ra\ci Q^{-1/2}\nm  \cdot f \ra\ci Q 
\la \nm   \la V\ra\ci Q^{1/2}W\nm  \cdot g \ra\ci Q |Q|
\\
& \le \| \wt S_3 g \| \ci{L^2} \| \wt S_2^V f\| \ci{L^2}\,,
\end{align*}
where $\wt S_2^V$ is the  scalar  square function \eqref{SqFn-02-sc} with $W$ replaced by $V$. Combining estimates for the norms of $\wt S_2$ and $\wt S_3$ from Lemma \ref{l:S-02} and Lemma \ref{l:S-03} respectively we get the conclusion of the lemma. 
\end{proof}


\subsection{A better estimate for a simple sparse family}

If the sparse sequence $\cS$ has a very simple structure, we can get a better estimate for the norm of $\wt L$. 

\begin{df*}
A sparse family $\cS\subset\cD$ is called \emph{simple} if each cube $Q\in\cS$ has at most one $\cS$-child.

Note that in a simple sparse family $\cS$ all cubes $Q\in\cS$ except the minimal (by inclusion) one  have exactly one $\cS$-child; the minimal cube (if such one exists) has no $\cS$-children. 
\end{df*}

\begin{lm}
\label{l:L-sc-simple}
Let $\cS\subset \cD$ be a simple sparse family of cubes. Assume also that
$d\times d$ matrix weights $V$ and $V$ satisfy the scalar $A_\infty\ut{sc}$ condition, and that they satisfy the joint $\mathbf A_2$ condition.

Then for the corresponding sparse (Lerner) operator $\wt L$ defined by \eqref{SpOp-sc} 
\begin{align*}
\| \wt L \| \ci{L^2\to L^2} \le C 2^{N/2} d^{1/2} [W,V]\ci{\mathbf A_2}^{1/2} \left(([W]\ci{A_\infty}\ut{sc})^{1/2} + ([V]\ci{A_\infty}\ut{sc})^{1/2}\right) .
\end{align*}
\end{lm}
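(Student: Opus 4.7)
A simple sparse family decomposes as a disjoint union of chains, so without loss of generality $\cS = \{Q_k\}_{k\ge 0}$ is a single chain with $Q_{k+1} \subsetneq Q_k$ and $|Q_{k+1}| \le (1-\eta)|Q_k|$ for some $\eta>0$. Write $E_k = Q_k \setminus Q_{k+1}$; the sets $E_k$ partition $Q_0$, and for $x\in Q_0$ let $i(x)$ be the unique index with $x\in E_{i(x)}$. The key feature of the chain is the geometric bound $\sum_{k\le m}|Q_k|^{-1} \le \eta^{-1}|Q_m|^{-1}$.

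By duality, I would estimate $(\wt L f, g)$ for non-negative $f, g\in L^2$. Decomposing $\iint_{Q_k\times Q_k} = \sum_{i,j\ge k}\iint_{E_i\times E_j}$ and switching the order of summation gives
\[
(\wt L f, g) = \sum_{i,j}\Bigl(\sum_{k\le \min(i,j)}|Q_k|^{-1}\Bigr)\iint_{E_i\times E_j}\nm V^{1/2}(x)W^{1/2}(y)\nm g(x)f(y)\,dx\,dy.
\]
The geometric bound on the inner sum reduces the estimate (up to $\eta^{-1}$) to two symmetric one-sided bilinear forms, exemplified by $\sum_i |Q_i|^{-1}\iint_{E_i\times Q_i}\nm V^{1/2}(x)W^{1/2}(y)\nm g(x)f(y)\,dx\,dy$. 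Using the factorization $\nm V^{1/2}(x)W^{1/2}(y)\nm \le \nm V^{1/2}(x)\la V\ra\ci{Q_i}^{-1/2}\nm \cdot \nm \la V\ra\ci{Q_i}^{1/2} W^{1/2}(y)\nm$ to separate $x$ and $y$, then Cauchy--Schwarz over $i$, I obtain the product of an $\wt S_3$-type expression in $f$ (controlled by Lemma \ref{l:S-03}, contributing $d^{1/2}[V,W]\ci{\bA_2}^{1/2}([W]\ci{A_\infty}\ut{sc})^{1/2}\|f\|_2$) and an expression in $g$. Thanks to the atomic partition $\{E_i\}$, this $g$-expression simplifies to the pointwise single-cube quantity $\int g(x)^2\,\nm V^{1/2}(x)\la V\ra\ci{Q_{i(x)}}^{-1/2}\nm^2\,dx$, which is bounded by $C [V]\ci{A_\infty}\ut{sc}\|g\|_2^2$ via reverse H\"older applied to the scalar $A_\infty$ weight $\trace(\la V\ra\ci{Q_i}^{-1/2}V\la V\ra\ci{Q_i}^{-1/2})$ --- crucially \emph{without} the extra factor of $d$ that the $\wt S_2^V$ square function would introduce. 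Performing the symmetric argument through $\la W\ra\ci{Q_i}$ yields a companion bound with the roles of $V$ and $W$ interchanged; summing the two gives the advertised factor $([W]\ci{A_\infty}\ut{sc})^{1/2} + ([V]\ci{A_\infty}\ut{sc})^{1/2}$ rather than the product appearing in Lemma \ref{l:L-sc}.

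\textbf{Main obstacle.} The hard part is the $g$-side estimate, which saves one factor of $d^{1/2}$ relative to the general square-function approach. The saving is possible because the atoms $E_k$ are disjoint: each $x$ is assigned to a unique $Q_{i(x)}$, so the $g$-integral collapses to a weighted $L^2$-bound against one scalar weight per point rather than a sum of $d$ square-function contributions. Carrying this out cleanly while preserving the $[V,W]\ci{\bA_2}^{1/2}$ factor coming from the factorization --- and extracting only a single power of $([V]\ci{A_\infty}\ut{sc})^{1/2}$ (once, not twice, even though the argument is run symmetrically in $V$ and $W$) --- is the main technical work, and the reason one really needs the chain structure of a \emph{simple} sparse family.
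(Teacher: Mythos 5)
Your overall architecture is the right one and is essentially the paper's: exploit the fact that for a simple family the sets $E\ci Q=Q\setminus\wh Q$ are disjoint, so that one of the two variables can be discharged by a bare Cauchy--Schwarz (producing $\|g\|_2$ with no square function), while the other variable carries a single square function; this is exactly what replaces the product $([W]\ci{A_\infty}\ut{sc}[V]\ci{A_\infty}\ut{sc})^{1/2}$ of Lemma \ref{l:L-sc} by the sum of square roots. Your chain decomposition with the geometric bound $\sum_{k\le m}|Q_k|^{-1}\le C|Q_m|^{-1}$ is a legitimate and arguably cleaner alternative to the paper's treatment of the diagonal piece $\wh Q\times\wh Q$, which the paper instead absorbs back into $\|\wt L\|$ by a near-extremizer ("pulling out by hair") argument.

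However, the $g$-side estimate --- which you correctly identify as the crux --- is wrong as stated. The quantity $\int_{Q_0} g(x)^2\,\nm V^{1/2}(x)\la V\ra\ci{Q_{i(x)}}^{-1/2}\nm^2\,\dd x$ is \emph{not} bounded by $C[V]\ci{A_\infty}\ut{sc}\|g\|_2^2$, and no reverse H\"older argument can make it so: the weight $x\mapsto\nm V^{1/2}(x)\la V\ra\ci{Q_{i(x)}}^{-1/2}\nm^2$ only has controlled \emph{averages} (its trace version integrates to $d|Q_i|$ over each $Q_i$), it is not bounded pointwise, while $g^2$ is an arbitrary $L^1$ density. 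Already for $d=N=1$, $V=v$ a scalar $A_2$ weight with an integrable singularity at a point of $E_0$ (say $v(x)=|x-x_0|^{-1/2}$), taking $g^2=(2\e)^{-1}\1\ci{[x_0-\e,x_0+\e]}$ gives $\int g^2\, v/\la v\ra\ci{Q_0}\asymp\e^{-1/2}\to\infty$ while $\|g\|_2=1$. The error comes from factorizing $\nm V^{1/2}(x)W^{1/2}(y)\nm$ through $\la V\ra\ci{Q_i}$ before applying Cauchy--Schwarz, which strands a nontrivial weight on the $g$-variable. The repair is to not factorize at all on the one-sided pieces: with $x\in E_i$ and $y\in Q_i$ you have
\begin{align*}
\sum_i\int_{E_i} g(x)\,\La\nm V(x)^{1/2}W^{1/2}\nm\cdot|f|\Ra\ci{Q_i}\dd x
\le \Bigl(\sum_i\int_{E_i}g^2\Bigr)^{1/2}\Bigl(\sum_i\int_{E_i}\La\nm V(x)^{1/2}W^{1/2}\nm\cdot|f|\Ra\ci{Q_i}^2\dd x\Bigr)^{1/2}
\le \|g\|_2\,\|\wt S_1 f\|_2,
\end{align*}
and Lemma \ref{l:S-01} (whose trace identity $\int_Q\la\nm V(x)^{1/2}W^{1/2}\nm_{\fS_2}^2\ra\ci Q\dd x=|Q|\trace(\la V\ra\ci Q\la W\ra\ci Q)$ handles the $x$-integration exactly, with only one reverse H\"older, in $y$) gives the factor $C2^{N/2}d^{1/2}[W,V]\ci{\bA_2}^{1/2}([W]\ci{A_\infty}\ut{sc})^{1/2}$; the symmetric piece with $V$ and $W$ interchanged gives the other term. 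This is precisely the paper's proof, so your claim that one must "save a factor of $d^{1/2}$" on the $g$-side by a new pointwise argument is a misdiagnosis: the saving is already built into $\wt S_1$.
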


\begin{proof}
Note first that for a simple sparse family its sparseness characteristic $\e$ satisfies $\e \le 2^{-N} \le 1/2$. 

Let us estimate $(\wt L f, g)$, $f, g\in L^2$, $\| f\| \ci{L^2} = \| g\| \ci{L^2} =1$. Without loss of generality we can assume that $f, g\ge 0$. We have 
\begin{align}
\label{Tfg-01}
(\wt L f, g) = \sum_{Q\in\cS} |Q|^{-1} \iint_{Q\times Q} f(y)\nm  W(y)^{1/2} V(x)^{1/2} \nm  g(x) dx dy. 
\end{align}

It is sufficient to prove this lemma for \emph{finite} simple families, 
so let us assume that our simple sparse family $\cS$ is finite. Then the operator $\wt L$ is bounded (finite sum of bounded terms), so given $\e>0$ we can pick $f, g\in L^2$, $f, g\ge 0$, $\| f\| \ci{L^2}=  \| g\| \ci{L^2} =1$, so that $(\wt L f, g) \ge (1-\e)\| \wt L\| \ci{L^2\to L^2}$.

For $Q\in\cS$ let $\wh Q$ be the $\cS$-child of $Q$, and let $E\ci Q:= Q\setminus \wh Q$. Then for each $Q$ the integral over $Q\times Q$ in \eqref{Tfg-01} can be split into 3 integrals, 
\begin{align*}
\iint_{Q\times Q}\ldots = \iint_{Q\times E\ci Q}\ldots + \iint_{E\ci Q\times \wh Q} \ldots+ \iint_{\wh Q\times \wh Q}\,.
\end{align*}

The sum of the first integrals can be estimated by the square function $(\wt S_1 f, g)$, so by Lemma \ref{l:S-01} it can be estimated by $C 2^{N/2} d^{1/2} [W,V]\ci{\mathbf A_2}^{1/2} ([W]\ci{A_\infty}\ut{sc})^{1/2}$. 

The sum of the second integrals is dominated by the sum of integrals over $E\ci Q\times Q$, so Lemma \ref{l:S-01} with $V$ and $W$ interchanged gives the estimate $C 2^{N/2} d^{1/2} [W,V]\ci{\mathbf A_2}^{1/2} ([V]\ci{A_\infty}\ut{sc})^{1/2}$. 

Now let us consider the last sum:
\begin{align*}
\sum_{Q\in\cS} |Q|^{-1}& \iint_{\wh Q\times \wh Q} f(y)\nm  W(y)^{1/2} V(x)^{1/2} \nm  g(x) \dd x \dd y 
\\ &
=2^{-N} \sum_{Q\in\cS} |\wh Q|^{-1} \iint_{\wh Q\times \wh Q} f(y)\nm  W(y)^{1/2} V(x)^{1/2} \nm  g(x) \dd x \dd y 
\\
&\le \frac12 \sum_{Q\in\cS} | Q|^{-1} \iint_{Q\times Q} f(y)\nm  W(y)^{1/2} V(x)^{1/2} \nm  g(x) \dd x \dd y
\\ &
\le \frac12 \| \wt L\|\ci{L^2\to L^2}.
\end{align*}
The standard ``pulling out by hair'' argument completes the proof. 
\end{proof}

\section{Some remarks}
There are several speculations here. 
\begin{enumerate}
\item Estimates are very rough, they do not use any intricacies of the matrix case. We do not use any matrix Carleson embedding theorems here. But we still cannot get a better estimate even for a simple sparse operator (like $V^{1/2}L W^{1/2}$, where $L$ is the usual scalar sparse operator). 

\item Examples showing that linear in the $A_2$ characteristic of the scalar weight is optimal can be obtained by considering weights with one singularity (say behaving like $|x|^p$, $0<p<1$) and estimating the norms of the Hilbert transform as $p\to 1^-$. 

Note, that the same example gives the optimal lower bound for a simple sparse operator with the sparse family $[0, 2^{-n})$, $n\in\N$. So, if we want to get a counterexample to the matrix linear $A_2$ conjecture, we need something more complicated than weights with simple singularities and simple sparse operators. 

\end{enumerate}

\end{document}